\theoremstyle{plain}
\newtheorem{thm}{Theorem}[section]
\newtheorem{prop}[thm]{Proposition}
\newtheorem{cor}[thm]{Corollary}
\newtheorem{lem}[thm]{Lemma}
\newtheorem{ques}[thm]{Question}
\theoremstyle{definition}
\newtheorem{df}{Definition}[section]
\theoremstyle{remark}
\newtheorem{rmk}{Remark}[section]
\newtheorem*{ac}{Acknowledgements}
\newcommand{\zz}{\mathbb{Z}}
\newcommand{\rr}{\mathbb{R}}
\DeclareMathOperator{\card}{Card}
\DeclareMathOperator{\met}{Met}
\DeclareMathOperator{\ult}{UMet}
\newcommand{\grsp}{\mathscr{M}}
\DeclareMathOperator{\grdis}{\mathcal{GH}}
\DeclareMathOperator{\hdis}{\mathcal{HD}}
\DeclareMathOperator{\dis}{dis}
\newcommand{\qcube}{\mathbf{Q}}
\DeclareMathOperator{\lbdim}{\underline{\dim}_{B}}
\DeclareMathOperator{\ubdim}{\overline{\dim}_{B}}
\DeclareMathOperator{\hdim}{\dim_{H}}
\DeclareMathOperator{\tdim}{\dim_{T}}
\DeclareMathOperator{\pdim}{\dim_{P}}
\DeclareMathOperator{\adim}{\dim_{A}}
\DeclareMathOperator{\ulocdim}{\overline{\dim}_{\mathrm{loc}}}
\DeclareMathOperator{\llocdim}{\underline{\dim}_{\mathrm{loc}}}
\DeclareMathOperator{\ugrdis}{\mathcal{NA}}
\newcommand{\ultsp}{\mathscr{U}}
\newcommand{\mnum}{\mathrm{N}}
\newcommand{\dimset}{\mathscr{D}}
\newcommand{\linset}{\mathcal{L}}
\newcommand{\linsettwo}{\mathcal{R}}
\newcommand{\numset}{\mathbb{M}}
\newcommand{\shrinkset}{\mathbb{SH}}
\DeclareMathOperator{\pmet}{PMet}
\DeclareMathOperator{\pumet}{PUMet}
\DeclareMathOperator{\ooo}{\Omega}
\newcommand{\deltasecond}{H^{\times}}
\newcommand{\yoavoidmap}{W}
\newcommand{\pppp}{%
  \mathop{\ooalign{{\scalebox{1.5}{$\boxplus$}}\cr\hidewidth\raisebox{.45ex}{\scalebox{.3}{$\mkern30mu \boldsymbol{p}$}}\hidewidth}}}%
\begin{document}

\title[Topological embeddings]
{
Fractal dimensions in the 
Gromov--Hausdorff space
 }

\author[Yoshito Ishiki]
{Yoshito Ishiki}
\address[Yoshito Ishiki]
{\endgraf
Photonics Control Technology Team
\endgraf
RIKEN Center for Advanced Photonics
\endgraf
2-1 Hirasawa, Wako, Saitama 351-0198, Japan}
\email{yoshito.ishiki@riken.jp}

%\date{\today}
\subjclass[2020]{Primary 53C23, Secondary 51F99}
\keywords{Fractal dimension, Gromov--Hausdorff distance}

\maketitle

\begin{abstract}
In this paper,  
we first show that 
for all four  non-negative real numbers, 
there exists a Cantor ultrametric space
whose Hausdorff dimension, 
packing dimension, 
upper box dimension, 
and Assouad dimension 
are equal to given four numbers, respectively. 
Next, 
by constructing topological embeddings of 
an arbitrary  compact metrizable space
into the Gromov--Hausdorff space
using a direct sum of metrics spaces, 
we prove that 
the set of all compact metric spaces 
possessing 
prescribed  
topological dimension,  
and  four dimensions explained above, 
and the set of all compact  ultrametric spaces
are  path-connected and
 have  infinite topological dimension. 
This observation on ultrametrics  provides another proof of 
Qiu's theorem 
stating that 
 the ratio of the Archimedean and  non-Archimedean Gromov--Hausdorff distances  
is unbounded. 
\end{abstract}

\section{Introduction}\label{sec:intro}
%The notion of dimensions of metric spaces  is a core of fractal geometry, which  characterizes the concept of  fractals (see \cite{falconer1997techniques}, \cite{falconer2004fractal}). 

In the present  paper,  
we mainly  deal with 
the topological dimension $\tdim X$, 
the Hausdorff dimension $\hdim(X, d)$, 
the packing dimension $\pdim(X, d)$, 
the upper box dimension $\ubdim(X, d)$, 
and 
the Assouad dimension $\adim(X, d)$ of 
a metric space $(X, d)$.
The definitions of these dimensions will be  presented in 
Section \ref{sec:pre}.
The topological dimension takes values 
in $\zz_{\ge -1}\cup\{\infty\}$, 
and the other 
four dimensions  take  values in $[0, \infty]$.  
For every  bounded metric space $(X, d)$, 
we have 
 the following basic inequalities (see Theorem \ref{thm:diminequalities}):
\begin{align*}\label{eq:basicinq}
\tdim X\le \hdim(X, d)\le \pdim(X, d)\le \ubdim(X, d)\le \adim(X, d). 
\end{align*}

\subsection{Fractal dimensions}
A topological space is said to be a \emph{Cantor space} if it is homeomorphic to the Cantor set. 
A metric $d: X^{2}\to [0, \infty)$ on a set $X$ 
is said to be a \emph{non-Archimedean metric} 
or \emph{ultrametric} if 
for all $x, y, z\in X$ the  strong triangle inequality:  
$d(x, y)\le d(x, z)\lor d(z, y)$ is satisfied, where $\lor$ is the 
maximal operator on $\rr$. 

In \cite{Ishiki2019}, 
the author proved that 
for all $a, b\in [0, \infty]$ with 
$a\le b$, there exists a Cantor metric space 
$(X, d)$ with $\hdim(X, d)=a$ and $\adim(X, d)=b$. 
As a development of this result, 
we solve  the  problems  of prescribed dimensions for 
 the  five  dimensions explained above.

We denote by $\linset$ the set of all 
$(a_{1}, a_{2}, a_{3}, a_{4})\in [0, \infty]^{4}$ satisfying 
\[
a_{1}\le a_{2}\le a_{3}\le a_{4}. 
\] 
We also denote by $\linsettwo$
the set of all 
$(l, a_{1}, a_{2}, a_{3}, a_{4})\in (\zz_{\ge 0}\cup\{\infty\})\times [0, \infty]^{4}$ satisfying 
$l\le a_{1}\le a_{2}\le a_{3}\le a_{4}$.

\begin{thm}\label{thm:prescribed}
For every $(a_{1}, a_{2}, a_{3}, a_{4})\in \linset$, 
there exists a Cantor ultrametric space $(X, d)$ such that 
\[
\hdim(X, d)=a_{1}, \pdim(X, d)=a_{2}, \ubdim(X, d)=a_{3}, 
\adim(X, d)=a_{4}.
\]
\end{thm}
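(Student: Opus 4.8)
The plan is to realize $(X,d)$ as a Moran-type ultrametric Cantor space and to tune its combinatorial data in blocks. The building blocks are as follows. Given a sequence $1 = t_0 > t_1 > t_2 > \cdots \to 0$ and a finitely branching infinite tree in which every node has at least two children and the level-$k$ cylinders are declared to have diameter $t_k$, let $X$ be its space of ends equipped with $d(x,y) = t_m$, where $m$ is the length of the longest common prefix of $x$ and $y$; this is a Cantor ultrametric space in which every closed ball is a cylinder, so there are no intermediate scales to manage. When the branching is homogeneous (the branching number $b_k$ depends only on $k$), writing $N_k = b_1\cdots b_k$ and $v_k = -\log t_k$, one checks by the usual arguments (uniform measure and Billingsley's lemma for lower bounds, the natural covers for upper bounds) that
\[
\hdim X = \liminf_k \frac{\log N_k}{v_k},\quad \ubdim X = \pdim X = \limsup_k \frac{\log N_k}{v_k},\quad \adim X = \sup_{i<k}\frac{\log(N_k/N_i)}{v_k - v_i},
\]
with the corresponding formulas for descendant-count sequences of cylinders in the non-homogeneous case (where $\pdim X = \ubdim X$ is no longer automatic). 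I also record the observation that $\adim X < \infty$ forces the branching to be uniformly bounded: a cylinder $C$ with $b$ children splits into $b$ pieces pairwise at distance $\di C$, so $B(x,\di C)$ needs $b$ balls of every smaller radius.

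With these formulas in hand, I would first realize the tuple $(a_1, a_2, a_2, a_2)$ with a homogeneous tree having $b_k = 2$ throughout, controlling only the increments $v_k - v_{k-1}$. Since $\log N_k = k\log 2$, the task is to make the graph $k \mapsto (v_k,\, k\log 2)$ alternate long ``climbing'' stretches, where $v$ grows slowly so that $k\log 2 / v_k$ rises toward $a_2$ (yielding $\limsup = a_2$, and chords of slope tending to $a_2$, so $\adim = a_2$), with long ``drifting'' stretches, where $v$ grows fast so that $k\log 2/v_k$ falls back toward $a_1$ (yielding $\liminf = a_1$). Keeping every chord slope at most $a_2$ gives $\adim \le a_2$, and $\pdim = \ubdim$ holds because the space is homogeneous, hence all its nonempty clopen subsets are rescaled copies of it. Letting the climbing slopes tend to $0$ covers $a_1 = 0$, and letting them tend to $\infty$ covers $a_2 = \infty$.

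For the general tuple I would make the tree non-homogeneous in order to lift $\ubdim$ up to $a_3$ and $\adim$ up to $a_4$ without disturbing $\hdim = a_1$ or $\pdim = a_2$. The Assouad boost should be supplied by sparsely inserted ``tiny-scale'' binary splits — a level with branching $2$ but scale ratio only $2^{1/a_4}$ contributes a chord of slope $a_4$ yet changes $\log N_k$ and $v_k$ negligibly, so it costs nothing in box dimension. The box-dimension boost is the delicate ingredient: one builds a ``scaffold'' of cylinders that branch by a fixed large integer $M$ so that the total descendant count reaches $\approx t_k^{-a_3}$ at the designated scales, while the whole space admits a countable decomposition $X = \bigcup_l A_l$ with $\ubdim A_l = a_2$ for every $l$, whence $\pdim X = a_2$ by countable stability and $\hdim X = a_1$; here $A_l$ should be (up to a scaling constant) a copy of the $(a_1,a_2)$-block attached after the scaffold has been ``spent'' along those ends at level $l$, with every end eventually leaving the scaffold. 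The numerical constraints can be met because $M$ may be chosen with $\log_2 M \ge a_3/a_2$, and the scale ratios placed within the windows dictated by the targets for $\hdim$, $\ubdim$, $\adim$, which are compatible precisely because $a_1 \le a_2 \le a_3 \le a_4$; when $a_4 = \infty$ the bounded-branching constraint disappears, and one may instead insert genuinely thick levels or build an ultrametric ``fan'' of $(a_1,a_2,a_3)$-blocks over an accumulating countable skeleton. Cases where some $a_i = \infty$ come out as limiting versions of the same constructions.

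The hard part will be this third step: forcing $\pdim$ strictly below $\ubdim$ — which requires the branching to depend genuinely on the cylinder and the space to be decomposable into finitely-box-dimensional pieces — while still pinning down $\ubdim$, $\adim$, and in particular the fragile liminf-type quantity $\hdim$ at their exact prescribed values, and doing so with uniformly bounded branching whenever $a_4 < \infty$. This forces the ``scaffold'' schedule, the integer $M$, and the entire sequence of scale ratios to be interlocked, and the bulk of the work is verifying the four dimension formulas for the resulting non-homogeneous space — above all the upper bound $\ubdim X \le a_3$ and the countable-stability identity $\pdim X = \sup_l \ubdim A_l$.
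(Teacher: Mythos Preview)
Your monolithic construction is plausible in outline but considerably harder than necessary, and the paper takes a much simpler route. The key simplification is \emph{modularity}: rather than building one non-homogeneous tree whose scale schedule, scaffold, and inserts are all interlocked, the paper first constructs a handful of Cantor ultrametric spaces with the extremal dimensional types $(1,1,1,1)$, $(0,1,1,1)$, $(0,0,1,1)$, $(0,0,0,1)$ (and their $\infty$-analogues), then exploits the observation that for an ultrametric $d$ the snowflake $d^{1/\eta}$ is again an ultrametric and multiplies all four dimensions by $\eta$. One raises the four building blocks to the powers $1/a_{1},1/a_{2},1/a_{3},1/a_{4}$ respectively and forms their disjoint union via an ultrametric amalgamation; finite stability ($\mathcal{D}(A\sqcup B)=\max\{\mathcal{D}(A),\mathcal{D}(B)\}$ for each of the four $\mathcal{D}$) then gives the prescribed type immediately. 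This dissolves your ``hard part'': separating $\pdim$ from $\ubdim$ becomes the isolated task of building a single space of type $(0,0,1,1)$ (done via a convergent sequence of upper box dimension~$1$ surrounded by Cantor dust of Assouad dimension~$0$), rather than a constraint entangled with the simultaneous control of $\hdim$ and $\adim$.

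Your sketch also contains a genuine, if easily repaired, slip in the Assouad step. Single scattered levels with branching $2$ and scale ratio $2^{-1/a_{4}}$ do \emph{not} raise $\adim$ to $a_{4}$. A lone insert gives one chord of slope $a_{4}$, but $\adim$ is $\limsup_{\epsilon\to 0}\log\Theta_{X,d}(\epsilon)/(-\log\epsilon)$, and isolated inserts only force $\Theta_{X,d}(2^{-1/a_{4}})\ge 2$ at the fixed value $\epsilon=2^{-1/a_{4}}$; the constant $C$ in the definition of $\adim$ absorbs any bounded anomaly, so the background slope $a_{2}$ still governs the limit. One needs growing \emph{blocks} of $n$ consecutive such levels, giving $\epsilon_{n}=2^{-n/a_{4}}\to 0$ with $\Theta_{X,d}(\epsilon_{n})\ge 2^{n}$, which is exactly what the paper does in building its $(0,0,0,1)$ piece. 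With this fix and a careful execution of the scaffold your direct approach could succeed, but the disjoint-union strategy is both shorter and avoids the delicate bookkeeping entirely.
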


By Theorem \ref{thm:prescribed}, we obtain the following 
(see also Theorem \ref{thm:diminequalities}):
\begin{thm}\label{thm:prescribed2}
For every $(l, a_{1}, a_{2}, a_{3}, a_{4})\in \linsettwo$, 
there exists a compact metric space $(X, d)$ such that 
\begin{align*}
&\tdim X=l, 
\hdim(X, d)=a_{1}, \pdim(X, d)=a_{2}, \ubdim(X, d)=a_{3}, \\
&\adim(X, d)=a_{4}.
\end{align*}
\end{thm}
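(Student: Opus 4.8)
The plan is to derive Theorem \ref{thm:prescribed2} from Theorem \ref{thm:prescribed} by taking the product of a suitable Cantor ultrametric space with a Euclidean cube. Suppose first that $l<\infty$. Since $l\le a_1\le a_2\le a_3\le a_4$, the tuple $(a_1-l,a_2-l,a_3-l,a_4-l)$ belongs to $\linset$ (with the convention $\infty-l=\infty$), so Theorem \ref{thm:prescribed} provides a Cantor ultrametric space $(C,\rho)$ with $\hdim(C,\rho)=a_1-l$, $\pdim(C,\rho)=a_2-l$, $\ubdim(C,\rho)=a_3-l$ and $\adim(C,\rho)=a_4-l$. I would then set $X=[0,1]^l\times C$, metrized by the maximum of the $\ell^\infty$-metric on $[0,1]^l$ and $\rho$; any bi-Lipschitz equivalent product metric serves equally well, since the four fractal dimensions are bi-Lipschitz invariant and $\tdim$ is topological. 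The space $X$ is compact, and the whole task reduces to verifying that each of the five dimensions of $X$ exceeds the corresponding dimension of $C$ by exactly $l$ (so that, e.g., $\hdim X=l+(a_1-l)=a_1$).

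For the topological dimension this is immediate: $X\supseteq[0,1]^l\times\{c\}$ forces $\tdim X\ge l$, while the product inequality $\tdim(Y\times Z)\le\tdim Y+\tdim Z$ for separable metrizable spaces gives $\tdim X\le l+\tdim C=l$. For the upper box dimension (and, identically, the lower one), partitioning $[0,1]^l$ into $\asymp r^{-l}$ cubes of side $r$ and multiplying by an $r$-net of $C$ gives $N_r(X)\asymp r^{-l}N_r(C)$, hence $\ubdim X=l+\ubdim C$. For the Hausdorff dimension I would avoid the general — and here insufficient — bound $\hdim(Y\times Z)\le\hdim Y+\ubdim Z$ and argue directly: for $s<\hdim C$, a Frostman measure $\nu$ on $C$ with $\nu(B(x,t))\lesssim t^{s}$ yields, since $[0,1]^l$ is Ahlfors $l$-regular, a measure $\mathcal{L}^l\times\nu$ on $X$ with $(\mathcal{L}^l\times\nu)(B(\cdot,t))\lesssim t^{l+s}$, so $\hdim X\ge l+s$; conversely, covering $C$ by sets $V_j$ with $\sum_j(\di V_j)^{s}$ small and each $[0,1]^l\times V_j$ by $\asymp(\di V_j)^{-l}$ cubes of side $\di V_j$ shows $\mathcal{H}^{l+s}(X)\lesssim\mathcal{H}^{s}(C)$, whence $\hdim X\le l+\hdim C$.

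For the packing dimension, the bound $\pdim X\le l+\pdim C$ follows by decomposing $C=\bigcup_iC_i$ with $\sup_i\ubdim C_i$ close to $\pdim C$, using the countable stability of $\pdim$ and the box-dimension identity $\ubdim([0,1]^l\times C_i)=l+\ubdim C_i$; for the reverse bound I would use the representation $\pdim E=\sup\{\overline{\dim}_P\mu:\mu\text{ a measure on }E\}$ together with $\overline{\dim}_P(\mathcal{L}^l\times\nu)=l+\overline{\dim}_P\nu$ (again via the regularity of $[0,1]^l$), letting $\overline{\dim}_P\nu\to\pdim C$. For the Assouad dimension, $\adim X\le\adim[0,1]^l+\adim C=l+\adim C$ is the standard product inequality, and for the matching lower bound, given $s<\adim C$ I would combine, at arbitrarily small scale pairs $r<R$, an $r$-separated subset of an $R$-ball of $C$ of size $\gtrsim(R/r)^{s}$ with a full $r$-grid in an $R$-subcube of $[0,1]^l$ of size $\asymp(R/r)^{l}$ to obtain an $r$-separated subset of an $R$-ball of $X$ of size $\gtrsim(R/r)^{l+s}$. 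All four identities degenerate correctly when some $a_i=\infty$, since already a slice $\{y\}\times C$ or $[0,1]^l\times\{c\}$ forces the dimension of $X$ to be infinite. Finally, if $l=\infty$ then $\linsettwo$ forces $a_1=a_2=a_3=a_4=\infty$, so it suffices to produce one compact metric space with all five dimensions infinite: attach a single point $\ast$ to the disjoint union of rescaled copies of $[0,1]^n$ of diameter $2^{-n}$ ($n\in\nn$), with $\ast$ at distance $2^{-n}$ from the $n$-th copy and between-copy distances defined through $\ast$; this space is compact and contains an isometric copy of a rescaled $[0,1]^n$ for every $n$, so $\tdim\ge n$ and $\hdim\ge n$ for all $n$, and then $\pdim,\ubdim,\adim$ are infinite by the inequality chain of Theorem \ref{thm:diminequalities}.

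The main obstacle is not the overall strategy, which is essentially forced once Theorem \ref{thm:prescribed} is in hand, but the bookkeeping of how the fractal dimensions behave under products with a Euclidean cube — especially the packing dimension, whose lower bound genuinely requires a measure-theoretic (or packing-measure) argument rather than the naive estimate $\pdim(Y\times Z)\ge\pdim Y+\hdim Z$ — together with the need to state the relevant product (in)equalities in their general-metric-space form, since the factor $C$ is an abstract ultrametric space rather than a subset of Euclidean space.
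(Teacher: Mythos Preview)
Your product construction works, but the paper takes a much simpler route. Instead of forming $[0,1]^l\times C$ with a Cantor space $C$ of \emph{shifted} dimensional type $(a_1-l,a_2-l,a_3-l,a_4-l)$, the paper applies Theorem~\ref{thm:prescribed} directly to obtain a Cantor ultrametric space $(X,d)$ of the \emph{original} type $(a_1,a_2,a_3,a_4)$ and then forms the \emph{disjoint union} $Y=X\sqcup[0,1]^l$, metrized via the amalgamation Lemma~\ref{lem:amal2}. Finite stability (Proposition~\ref{prop:propdim}\,(\ref{item:finitestab})) immediately gives $\mathcal{D}(Y)=\max\{\mathcal{D}(X),\mathcal{D}([0,1]^l)\}=\max\{a_i,l\}=a_i$ for each of the four fractal dimensions, and Proposition~\ref{prop:tdimprod} gives $\tdim Y=\max\{0,l\}=l$.

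So the paper sidesteps entirely the product identities you work to establish---in particular the packing-dimension lower bound, which you rightly flag as the delicate step. Your approach does buy something extra: the resulting space is homeomorphic to $[0,1]^l\times(\text{Cantor set})$, hence connected for $l\ge 1$ and of a fixed topological type, whereas the disjoint union is not. But for Theorem~\ref{thm:prescribed2} as stated, the union argument is essentially a one-liner once finite stability is on the table, and it avoids all the measure-theoretic machinery (Frostman measures, local dimensions of product measures, Ahlfors regularity of the cube) that your argument requires.
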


\subsection{Topological embeddings of the Hilbert cube}
In this paper, 
we denote by 
 $\grsp$  
 the  set of all isometry classes of
  non-empty compact metric spaces, and 
 denote by 
 $\grdis$
the Gromov--Hausdorff distance (the definition will be 
presented in Section \ref{sec:pre}). 
The space 
$(\grsp, \grdis)$ is called   the 
\emph{Gromov--Hausdorff space}.
By abuse of notation, 
we represent an element of $\grsp$ as 
 a pair $(X, d)$ of a set $X$ and a metric $d$ rather than its isometry class.

For $(l, a_{1}, a_{2}, a_{3}, a_{4})\in \linsettwo$,  
we denote by 
$\dimset(l, a_{1}, a_{2}, a_{3}, a_{4})$ the set of all 
compact metric spaces in $\grsp$ satisfying 
\begin{align*}
&\tdim X=l, 
\hdim(X, d)=a_{1}, \pdim(X, d)=a_{2}, \ubdim(X, d)=a_{3}, \\
&\adim(X, d)=a_{4}.
\end{align*}

Note that Theorem \ref{thm:prescribed} implies that  
$\dimset(l, a_{1}, a_{2}, a_{3}, a_{4})\neq \emptyset$ for 
all $(l, a_{1}, a_{2}, a_{3}, a_{4})\in \linsettwo$. 

We denote by $\ultsp$ the set of all compact ultrametric spaces in $\grsp$. 

We also  define 
$\qcube =\prod_{i\in \zz_{\ge 0}}[0, 1]$. 
The space $\qcube$ ( or a space homeomorphic to $\qcube$) is called the \emph{Hilbert cube}. 

In \cite{Ishiki2021branching}, 
the author  defined 
$\mathscr{X}(u, v, w)$ for $(u, v, w)\in \{0, 1, 2\}^{3}$
as sets  of all compact metric spaces satisfying or not satisfying 
the doubling property, the uniform disconnectedness, and   the uniform perfectness, which are 
properties  
appearing  in the David--Semmes theorem \cite[Proposition 15.11]{DS1997}. 
The author \cite{Ishiki2021branching} 
proved that for all compact metric spaces $(X, d)$ and  $(Y, e)$ in $\mathscr{X}(u, v, w)$ with 
$\grdis((X, d), (Y, e))>0$, there exist continuum many 
geodesics connecting $(X, d)$ and $(Y, e)$ passing through $\mathscr{X}(u, v, w)$. 
The construction of such  geodesics induces a topological embedding of the Hilbert cube into $\mathscr{X}(u, v, w)$, and hence $\mathscr{X}(u, v, w)$ has infinite topological dimension. 
In the present paper, as an analogue of   this result, 
we prove  
that an arbitrary compact metrizable space can be topologically embedded into  $\dimset(l, a_{1}, a_{2}, a_{3}, a_{4})$ and  $\ultsp$.

By constructing 
topological embeddings 
of an arbitrary compact metrizable  space 
$H$ into $\dimset(l, a_{1}, a_{2}, a_{3}, a_{4})$ 
and $\ultsp$ which maps 
given $n+1$ points in $H$ into given $n+1$ compact metric spaces, 
we prove the  path-connectedness and the infinite-dimensionality of $\dimset(l, a_{1}, a_{2}, a_{3}, a_{4})$ and  $\ultsp$. 
The existence  of these embeddings is based on 
a
construction of  a metric on a direct sum space
of metric spaces using  amalgamations of metrics. 
\begin{thm}\label{thm:topemb}
Assume that 
$\mathscr{S}=\dimset(l, a_{1}, a_{2}, a_{3}, a_{4})$ for 
some numbers  $(l, a_{1}, a_{2}, a_{3}, a_{4})\in \linsettwo$ or 
$\mathscr{S}=\ultsp$. 
Let $n\in \zz_{\ge 1}$, 
and $H$ a compact metrizable space. 
Take  mutually distinct 
$n+1$ points $\{v_{i}\}_{i=1}^{n+1}$ in 
$H$, and 
let $\{(X_{i}, d_{i})\}_{i=1}^{n+1}$ be compact metric spaces 
 in $\mathscr{S}$ satisfying  that 
 $\grdis((X_{i}, d_{i}), (X_{j}, d_{j}))>0$ for all 
 distinct $i, j$. 
 Then  
there exists a topological 
embedding 
$\Phi: H\to \mathscr{S}$ such that 
$\Phi(v_{i})=(X_{i}, d_{i})$ 
for all 
$i\in \{1, \dots, n+1\}$. 
\end{thm}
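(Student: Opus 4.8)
The plan is to realise $\Delta_n$ as a space of ``weighted mixtures'' of the given spaces $(X_i,d_i)$ and to turn each such mixture into a single compact metric space in a way that depends continuously (in the Gromov--Hausdorff topology) on the barycentric coordinates, while (a) not changing any of the five dimensions, and (b) staying within $\ultsp$ when all the $X_i$ are ultrametric. Concretely, for a barycentric coordinate vector $s=(s_1,\dots,s_{n+1})\in\Delta_n$ I would build a metric space $\Phi(s)$ on (a quotient of) the disjoint union $\bigsqcup_{i}X_i$ by rescaling the $i$-th piece by a factor $f(s_i)$, where $f\colon[0,1]\to[0,\infty)$ is a fixed continuous function with $f(0)=0$ and $f(t)>0$ for $t>0$ (so that pieces with weight $0$ collapse to a point), and then gluing the rescaled pieces together by an \emph{amalgamation of metrics} along the chosen basepoints, as the last sentence before the theorem promises. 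When every $s_i>0$ this glued space is, up to bi-Lipschitz equivalence, the finite disjoint union of the $X_i$ with a bounded connecting metric, so each of the five dimensions of $\Phi(s)$ equals $\max_i$ of the corresponding dimension of $X_i$ (using the finite-stability properties recorded in Theorem~\ref{thm:diminequalities}); in particular $\Phi(s)\in\mathscr S$ for interior $s$. For $s=v_i$ only the $i$-th factor survives, so $\Phi(v_i)=(X_i,d_i)$ as required, and for boundary $s$ one gets the analogous mixture of the surviving factors, which still lies in $\mathscr S$ by the same dimension computation.

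The main work is organised in the following steps. First I would fix, for each $i$, a basepoint $p_i\in X_i$ and recall the amalgamation construction: given metric spaces with basepoints and radii, there is a canonical metric on their wedge that restricts to the given (rescaled) metrics on each factor, is controlled by the diameters, and varies continuously in the input radii; I would cite or reprove the relevant continuity and dimension-preservation lemmas. Second, define $\Phi(s)$ as the amalgamation of $\{(X_i, f(s_i)\,d_i, p_i)\}_{i=1}^{n+1}$, interpreting a factor with $f(s_i)=0$ as the one-point space $\{p_i\}$. Third, prove continuity of $\Phi\colon\Delta_n\to\grsp$: away from the locus where some $s_i=0$ this is immediate from continuity of $f$ and of the amalgamation in its radius parameters together with the obvious Gromov--Hausdorff estimate $\grdis\big((X,td),(X,t'd)\big)\le \tfrac12|t-t'|\,\di(X,d)$ for rescalings; near such a locus one checks that as $s_i\to0$ the $i$-th rescaled factor Gromov--Hausdorff-converges to a point and hence can be deleted, so $\Phi$ extends continuously across the boundary strata. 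Fourth, verify $\Phi(s)\in\mathscr S$ for every $s$: the dimension values follow from finite stability of $\tdim,\hdim,\pdim,\ubdim,\adim$ under finite (bi-Lipschitz-trivial) unions, noting that rescaling by a positive constant changes none of the five dimensions and that the added ``connecting'' distances, being bounded, do not affect the Assouad dimension either; compactness is clear, and the strong triangle inequality is preserved by a suitable ultrametric amalgamation, giving $\Phi(s)\in\ultsp$ in that case. Fifth, prove injectivity: from $\Phi(s)$ one can recover, up to isometry, the multiset of its ``maximal non-collapsible'' pieces together with their scaling factors $f(s_i)$; since the $X_i$ are pairwise non-isometric even up to scaling --- which is exactly what $\grdis((X_i,d_i),(X_j,d_j))>0$ does \emph{not} quite give, so here one must be a little careful --- $\Phi(s)=\Phi(s')$ forces $s=s'$. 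Because $\Delta_n$ is compact and $\grsp$ is Hausdorff, a continuous injection is automatically a topological embedding, completing the proof.

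The step I expect to be the genuine obstacle is the injectivity argument, and more precisely making the recovery of the coordinates $s_i$ from the isometry class of $\Phi(s)$ robust. The hypothesis only says $\grdis((X_i,d_i),(X_j,d_j))>0$, i.e.\ the $X_i$ are pairwise non-isometric; it does \emph{not} preclude $X_i$ being isometric to a rescaling of $X_j$, nor does it preclude one $X_i$ being isometric to a sub-piece of the amalgamated space arising from another factor. To handle this I would not try to recover the $X_i$ themselves but rather design the construction so that the \emph{total} rescaled space carries a recoverable numerical invariant that pins down $s$: for instance arrange the connecting metric so that the distance between the (images of the) basepoints $p_i$ and $p_j$ in $\Phi(s)$ equals an explicit, strictly monotone function of $(s_i,s_j)$, or attach to the $i$-th factor a small ``flag'' (a short interval or a distinguished pair of points) of length a strictly increasing function of $s_i$, chosen small enough not to disturb the dimensions; then $s$ is read off from metric data intrinsic to $\Phi(s)$. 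A second, more technical point is continuity across the boundary strata of $\Delta_n$: one must check that deleting a factor whose radius tends to $0$ is continuous in the Gromov--Hausdorff metric uniformly in the other parameters, which is where the explicit diameter bounds from the amalgamation lemma are used. Everything else --- compactness, the dimension identities, and the ultrametric case --- is routine given the finite-stability results in Theorem~\ref{thm:diminequalities} and a standard ultrametric amalgamation.
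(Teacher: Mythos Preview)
Your overall architecture---rescaling the $X_i$ by $f(s_i)$, amalgamating, and collapsing factors with weight zero---is essentially what the paper does, and your continuity and dimension-preservation arguments would go through (the paper packages the continuity via Corollary~\ref{cor:metconti} and the dimension stability via Propositions~\ref{prop:propdim}, \ref{prop:tdimprod}, \ref{prop:productdim}). The genuine gap is exactly where you flag it: injectivity.

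Neither of your proposed fixes works as stated. Fix (a) fails because basepoints are not isometry-invariant data: from the isometry class of $\Phi(s)$ you have no way to pick out which points are the $p_i$, so you cannot read off their mutual distances. Fix (b)---attaching a flag of length depending on $s_i$---faces two obstacles. First, nothing prevents $X_i$ from already containing copies of your flag (isolated pairs of points, short arcs, etc.), so the flag is not intrinsically recognisable. Second, even if you could recover the multiset $\{f(s_i)\}$, you still need to match each scale to the correct index $i$; the hypothesis $\grdis((X_i,d_i),(X_j,d_j))>0$ does not let you recover the $X_i$ as distinguished subspaces of the amalgam, and indeed $X_i$ could be isometric to a rescaling of $X_j$ or to a proper subspace of the amalgam.

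The paper resolves this with three additional devices that your sketch is missing. First, it replaces each $X_i$ by $Y_i=X_i\times P$ for a fixed Cantor space $P$ of dimensional type $(0,0,0,0)$; this forces every $Y_i$ to be perfect, so the ``flag'' can be singled out as the closure of the isolated points, an isometry invariant. Second, instead of a scalar-valued flag per factor, it attaches a single countable tail $(\Omega,u[q])$ (or $(\Omega,v[q])$ in the ultrametric case) whose metric encodes a point $q\in[0,1]^{\zz_{\ge 0}}$; by Proposition~\ref{prop:Uiso}, the isometry class of this tail---even up to rescaling---determines $q$. The parameter $s$ is then fed into $q$ via a topological embedding of (a quotient of) $\Delta_n$ into the Hilbert cube. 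Third, because the resulting map might still accidentally hit one of the $(X_i,d_i)$ on $\deltasecond_n$, the paper runs the construction with $m=n+2$ parallel copies and uses a pigeonhole argument to select one copy on which no such collision occurs. Without these ingredients (perfectifying the $X_i$, a tail whose isometry class injects the parameter, and the pigeonhole step), your injectivity claim does not go through.
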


In \cite[Lemma 2.18]{Ishiki2021branching}, it is shown
that if $\mathcal{D}$ is any one of $\tdim$, $\hdim$, $\pdim$, $\lbdim$, $\ubdim$,  or $\adim$, then 
the set of all compact metric space $(X, d)$ 
such that $\mathcal{D}(X, d)=\infty$ is 
dense in $\grsp$. 
Using  the same method, 
we find that the set 
$\dimset(l, a_{1}, a_{2}, a_{3}, a_{4})$ is dense in 
$\grsp$.  

In this paper, a topological space is said to be 
\emph{infinite-dimensional} if its topological dimension is infinite. 

As consequences of Theorem \ref{thm:topemb}, 
we obtain the following corollaries.  
\begin{cor}\label{cor:dimset}
For all   $(l, a_{1}, a_{2}, a_{3}, a_{4})\in \linsettwo$, 
the set $\dimset(l, a_{1}, a_{2}, a_{3}, a_{4})$ is 
path-connected,  infinite-dimensional, and  dense  in 
 $\grsp$. 
\end{cor}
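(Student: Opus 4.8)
The plan is to derive Corollary \ref{cor:dimset} as a direct consequence of Theorem \ref{thm:topemb} together with the density remark that precedes the corollary. Fix $(l, a_{1}, a_{2}, a_{3}, a_{4})\in \linsettwo$ and write $\mathscr{S} = \dimset(l, a_{1}, a_{2}, a_{3}, a_{4})$; by Theorem \ref{thm:prescribed2} (see also the note after the definition of $\dimset$) the set $\mathscr{S}$ is non-empty. Density in $\grsp$ has already been asserted just above the statement of the corollary, obtained by the method of \cite[Lemma 2.18]{Ishiki2021branching}; so the two points that need to be extracted from Theorem \ref{thm:topemb} are path-connectedness and infinite-dimensionality.

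For path-connectedness, I would argue as follows. Let $(X, d)$ and $(Y, e)$ be two arbitrary members of $\mathscr{S}$. If $\grdis((X, d), (Y, e)) = 0$ then they represent the same point of $\grsp$ and there is nothing to prove. Otherwise apply Theorem \ref{thm:topemb} with $n = 1$ and the two spaces $(X_{1}, d_{1}) = (X, d)$, $(X_{2}, d_{2}) = (Y, e)$, which satisfy the hypothesis $\grdis((X_{1}, d_{1}), (X_{2}, d_{2})) > 0$. We obtain a topological embedding $\Phi\colon \Delta_{1}\to \mathscr{S}$ with $\Phi(v_{1}) = (X, d)$ and $\Phi(v_{2}) = (Y, e)$. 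Since $\Delta_{1}$ is an arc (homeomorphic to $[0, 1]$) joining $v_{1}$ to $v_{2}$, the composition of this parametrization with $\Phi$ is a path in $\mathscr{S}$ from $(X, d)$ to $(Y, e)$. Hence $\mathscr{S}$ is path-connected.

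For infinite-dimensionality, I would use that $\mathscr{S}$ contains $n+1$ spaces that are pairwise at positive Gromov--Hausdorff distance for every $n$; this follows from $\mathscr{S} \neq \emptyset$ combined with the density of $\mathscr{S}$ in $\grsp$ (or, more elementarily, by scaling a single member of $\mathscr{S}$ — though one must be careful that scaling may change the prescribed dimensions, so invoking density is cleaner: pick distinct points of $\grsp$ near, say, rescalings of a fixed element and move them into $\mathscr{S}$). Given such a family $\{(X_{i}, d_{i})\}_{i=1}^{n+1}$, Theorem \ref{thm:topemb} furnishes a topological embedding $\Phi\colon \Delta_{n}\to \mathscr{S}$. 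Since $\Delta_{n}$ has topological dimension $n$ and topological dimension is monotone under topological embeddings, $\tdim \mathscr{S}\ge n$. As $n$ is arbitrary, $\tdim \mathscr{S} = \infty$, i.e. $\mathscr{S}$ is infinite-dimensional. The same three arguments apply verbatim with $\mathscr{S} = \ultsp$, though the statement of the corollary only records the case of $\dimset$.

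The main (and essentially only) obstacle is the existence of the required $n+1$ pairwise GH-distinct elements of $\mathscr{S}$ for the infinite-dimensionality part: one has to make sure the prescribed-dimension constraints do not collapse $\mathscr{S}$ to a point. This is handled by the density statement preceding the corollary (a set dense in the infinite space $\grsp$ certainly contains arbitrarily many pairwise distinct points), so no new work is needed. Everything else is a formal consequence of Theorem \ref{thm:topemb} and the elementary facts that $\Delta_{1}\cong[0,1]$ and that $\tdim$ is monotone under embeddings.
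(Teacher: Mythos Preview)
Your proof is correct and follows essentially the same approach as the paper's: path-connectedness and infinite-dimensionality are read off Theorem~\ref{thm:topemb}, and density is taken from the remark preceding the corollary (which the paper's own proof also cites, via \cite[Lemmas 2.17 and 2.18]{Ishiki2021branching}). Your version simply spells out the details the paper leaves implicit---in particular the need for $n+1$ pairwise GH-distinct elements of $\mathscr{S}$; incidentally, your hesitation about scaling is unnecessary, since all five dimensions $\tdim$, $\hdim$, $\pdim$, $\ubdim$, $\adim$ are invariant under $d\mapsto\lambda d$, so rescaling any non-singleton member of $\mathscr{S}$ already produces such families.
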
 
\begin{cor}\label{cor:pathinfult}
The set $\ultsp$ of all compact ultrametric spaces  is
path-connected and  infinite-dimensional
  in   $\grsp$. 
\end{cor}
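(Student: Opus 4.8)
The plan is to derive both conclusions directly from Theorem \ref{thm:topemb} applied with $\mathscr{S}=\ultsp$; no new construction is required.

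For path-connectedness, I take two spaces $(X,d),(Y,e)\in\ultsp$. If $\grdis((X,d),(Y,e))=0$ they represent the same point of $\grsp$ and there is nothing to prove. Otherwise I apply Theorem \ref{thm:topemb} with $n=1$ to the pair $\{(X,d),(Y,e)\}$, whose two members are at positive mutual Gromov--Hausdorff distance, obtaining a topological embedding $\Phi\colon\Delta_{1}\to\ultsp$ with $\Phi(v_{1})=(X,d)$ and $\Phi(v_{2})=(Y,e)$. Since $\Delta_{1}$ is homeomorphic to $[0,1]$, the map $\Phi$ is a path in $\ultsp$ joining $(X,d)$ to $(Y,e)$, so $\ultsp$ is path-connected.

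For infinite-dimensionality, I first fix, once and for all, a sequence $\{(Z_{k},\rho_{k})\}_{k\in\zz_{\ge 1}}$ of compact ultrametric spaces that are pairwise at positive Gromov--Hausdorff distance; one may for instance take $Z_{k}$ to be a two-point space of diameter $2^{-k}$ (every two-point space is trivially an ultrametric space, and two-point spaces of diameters $s$ and $t$ are at Gromov--Hausdorff distance $|s-t|/2$), or the discrete $k$-point ultrametric space (distinct cardinalities force positive Gromov--Hausdorff distance). Then, for each $n\in\zz_{\ge 1}$, I apply Theorem \ref{thm:topemb} with $\mathscr{S}=\ultsp$ to the $n+1$ spaces $(Z_{1},\rho_{1}),\dots,(Z_{n+1},\rho_{n+1})$ to obtain a topological embedding $\Phi_{n}\colon\Delta_{n}\to\ultsp$. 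Because $(\grsp,\grdis)$ is a metric space, the subspace $\ultsp$ is (separable) metrizable, and topological dimension is monotone under passing to subspaces of metrizable spaces; hence
\[
\tdim\ultsp\ge\tdim\bigl(\Phi_{n}(\Delta_{n})\bigr)=\tdim\Delta_{n}=n.
\]
Letting $n\to\infty$ gives $\tdim\ultsp=\infty$, i.e. $\ultsp$ is infinite-dimensional.

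Since Theorem \ref{thm:topemb} supplies the embeddings, the argument is short, and I do not expect a genuine obstacle here. The only points worth stating explicitly are the two standard facts invoked at the end — namely $\tdim\Delta_{n}=n$ and the monotonicity of topological dimension on subspaces of metrizable spaces — together with the elementary existence of infinitely many pairwise Gromov--Hausdorff-distinct compact ultrametric spaces used to feed Theorem \ref{thm:topemb}.
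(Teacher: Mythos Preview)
Your proof is correct and follows exactly the approach the paper intends: the paper's own proof is the single sentence ``Theorem \ref{thm:topemb} implies that $\ultsp$ is path-connected and infinite-dimensional,'' and you have simply spelled out the details (the $n=1$ case for paths, the existence of infinitely many pairwise non-isometric compact ultrametric spaces, and the monotonicity of $\tdim$ on subspaces) that the paper leaves implicit.
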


\begin{rmk}
For a metric space $(X, d)$, 
the dilation map $\lambda\mapsto (X, \lambda d)$ 
$(\lambda\in [0, 1])$ 
is a geodesic connecting the one-point metric space 
and $(X, d)$ (see \cite[(4) in Proposition 1.4]{IT2019iso}). 
From this observation, 
it follows that $(\ultsp, \grdis)$
is path-connected. 
M{\'e}moli, Smith and Wan \cite{memoli2019gromov}
 proved that  $(\ultsp, \grdis)$ is a geodesic space 
(see \cite[Theorem 7.13]{memoli2019gromov}), 
and proved that 
$(\ultsp, \grdis)$ is closed and nowhere dense in 
$\grsp$ (see \cite[Proposition 4.17]{memoli2019gromov}). 
Thus,  the path-connectedness of 
$(\ultsp, \grdis)$ 
 have been already known. 
Our construction of paths is obtained as  a by-product of our topological embeddings of 
an arbitrary compact metrizable space, 
and 
this construction  is  enough  to 
obtain another proof of Qiu's theorem. 
\end{rmk}

\begin{ques}
For every $(l, a_{1}, a_{2}, a_{3}, a_{4})\in \linsettwo$, 
is the  metric space $(\dimset(l, a_{1}, a_{2}, a_{3}, a_{4}), \grdis)$ a geodesic space?
\end{ques}

\subsection{Space of compact  ultrametric spaces}
We provide  applications of  
Corollary \ref{cor:pathinfult} to the non-Archimedean
 Gromov--Hausdorff space. 
 
As a non-Archimedean analogue of the Gromov--Hausdorff distance $\grdis$ on $\grsp$, 
the  \emph{non-Archimedean Gromov--Hausdorff distance} $\ugrdis$ on $\ultsp$ was defined in \cite{Z2005} (the definition will be presented in Subsection \ref{subsec:ultra}). 
 The space $(\ultsp, \ugrdis)$ is called the 
 \emph{non-Archimedean Gromov--Hausdorff space}. 
In this paper, 
for a metric space 
$(X, d)$, 
and for a subset 
$A$ 
of 
$X$, 
we represent 
the restricted metric 
$d|_{A^2}$ 
as 
  the same symbol 
$d$ 
as  the ambient metric 
$d$ until otherwise stated.  
From 
Corollary \ref{cor:pathinfult}, 
 we deduce the relationship between the spaces 
 $(\ultsp, \grdis)$ and $(\ultsp, \ugrdis)$.

\begin{cor}\label{cor:discon}
Let $I_{\ultsp}\colon (\ultsp, \grdis)\to (\ultsp, \ugrdis)$ be the identity map of $\ultsp$. 
Then, $I_{\ultsp}$ is not continuous with respect to 
the 
topologies induced from $\grdis$ and $\ugrdis$. 
\end{cor}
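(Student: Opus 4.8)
The plan is to argue by contradiction: suppose $I_{\ultsp}$ is continuous. The strategy is to compare the topological dimensions of $(\ultsp, \grdis)$ and $(\ultsp, \ugrdis)$ using Corollary \ref{cor:pathinfult}, so I first need to understand the topology on $(\ultsp, \ugrdis)$. The key structural fact about the non-Archimedean Gromov--Hausdorff distance $\ugrdis$ that I would invoke is that it is itself an ultrametric on $\ultsp$ (this is established in \cite{Z2005} or \cite{memoli2019gromov}); hence $(\ultsp, \ugrdis)$ is an ultrametric space, and in particular it is totally disconnected and therefore has topological dimension $0$ (it is nonempty). By contrast, Corollary \ref{cor:pathinfult} tells us $(\ultsp, \grdis)$ is path-connected and infinite-dimensional.

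Now suppose toward a contradiction that $I_{\ultsp}$ is continuous. Then the image of any path in $(\ultsp, \grdis)$ is a path in $(\ultsp, \ugrdis)$; since a path-connected subset of an ultrametric space must be a single point (the image is connected, and connected subsets of a totally disconnected space are singletons), and $(\ultsp, \grdis)$ is path-connected with more than one point, this already gives a contradiction. Alternatively, and more robustly, I would note that a continuous bijection from $(\ultsp, \grdis)$ onto $(\ultsp, \ugrdis)$ need not be a homeomorphism, so I cannot directly transport the dimension; instead I use the path-connectedness argument just sketched, which only needs continuity in the stated direction. To be fully careful, I should verify that $\ultsp$ contains at least two distinct points, which is immediate (e.g.\ a one-point space and a two-point space), and that $\grdis$ restricted to $\ultsp$ genuinely separates these, which follows from $\grdis$ being a metric on $\grsp$.

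The one point requiring the most care is the claim that $(\ultsp, \ugrdis)$ carries an ultrametric and is consequently totally disconnected; I would cite the relevant statement from \cite{Z2005} or \cite{memoli2019gromov} rather than reprove it, and then observe that in a totally disconnected space every connected subset, hence every continuous image of an interval, is a singleton. Combined with the existence, guaranteed by Corollary \ref{cor:pathinfult}, of a nonconstant path in $(\ultsp, \grdis)$ — indeed the dilation path $\lambda \mapsto (X, \lambda d)$ from the one-point space to any fixed nontrivial compact ultrametric space $(X,d)$ works — continuity of $I_{\ultsp}$ would force this path to be constant, a contradiction. Hence $I_{\ultsp}$ is not continuous, which is the assertion. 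I expect no real obstacle beyond correctly locating the ``$\ugrdis$ is an ultrametric'' statement in the literature; the topological reasoning is then routine.
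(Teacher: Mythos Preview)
Your proposal is correct and follows essentially the same approach as the paper: invoke Corollary \ref{cor:pathinfult} to see that $(\ultsp,\grdis)$ is path-connected, cite the fact (Proposition \ref{prop:nault}, from \cite{Z2005}) that $\ugrdis$ is an ultrametric so that $(\ultsp,\ugrdis)$ is totally disconnected, and conclude that $I_{\ultsp}$ cannot be continuous. The paper also remarks, as you do, that infinite-dimensionality versus zero-dimensionality gives an alternative contradiction.
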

\begin{rmk}
It is known that the inverse $I_{\ultsp}^{-1}\colon(\ultsp, \ugrdis)\to
 (\ultsp, \grdis)$ is $(2^{-1})$-Lipschitz (see 
\cite[Corollary 2.9]{qiu2009geometry}). 
\end{rmk}

The following corollary was first proven  by Qiu \cite[Theorem 4.8]{qiu2009geometry} by constructing concrete examples. 
Our proof is different from Qiu's one. 
\begin{cor}\label{cor:ratio}
For every $c\in [2, \infty)$, 
there exists compact ultrametric spaces 
$(X, d)$ and $(Y, e)$ such that 
\[
c\cdot \grdis((X, d), (Y, e))<\ugrdis((X, d), (Y, e)). 
\]
\end{cor}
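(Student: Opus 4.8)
\textbf{Proof plan for Corollary \ref{cor:ratio}.}
The plan is to derive this purely from Corollary \ref{cor:pathinfult} together with the known one-sided Lipschitz estimate $\grdis \le \ugrdis$ on $\ultsp$ (the $(2^{-1})$-Lipschitz property of $I_{\ultsp}^{-1}$ recorded in the remark). Suppose, for contradiction, that the conclusion fails for some $c \in [2, \infty)$; that is, suppose $\ugrdis((X, d), (Y, e)) \le c \cdot \grdis((X, d), (Y, e))$ for all compact ultrametric spaces $(X, d)$ and $(Y, e)$. Combined with $\grdis \le \ugrdis$, this says exactly that the identity map $I_{\ultsp} \colon (\ultsp, \grdis) \to (\ultsp, \ugrdis)$ is $c$-Lipschitz, and in particular a homeomorphism onto its image with a Lipschitz inverse. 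Hence the two metrics $\grdis$ and $\ugrdis$ induce the same topology on $\ultsp$, and moreover they are bi-Lipschitz equivalent.

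The next step is to exhibit an obstruction to such an equivalence. The most direct route is to invoke Corollary \ref{cor:pathinfult}: if $\grdis$ and $\ugrdis$ were bi-Lipschitz equivalent (or even merely topologically equivalent) on $\ultsp$, then $(\ultsp, \ugrdis)$ would be path-connected, since $(\ultsp, \grdis)$ is. But $(\ultsp, \ugrdis)$ is \emph{not} path-connected: in the non-Archimedean Gromov--Hausdorff space, distinct spaces with different finite diameters, or more simply the distance from a one-point space, behave ultrametrically, so $(\ultsp, \ugrdis)$ is itself an ultrametric (or at least totally disconnected) space and thus has no nonconstant paths. Concretely, one checks that $\ugrdis$ satisfies the strong triangle inequality on $\ultsp$ (this is essentially the content of \cite{Z2005} and \cite{qiu2009geometry}); an ultrametric space with more than one point is totally disconnected, hence its only paths are constant. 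Since $\ultsp$ has more than one point, $(\ultsp, \ugrdis)$ is not path-connected, contradicting the transfer of path-connectedness along the homeomorphism $I_{\ultsp}$. This contradiction proves the corollary.

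I should remark that Corollary \ref{cor:ratio} is in fact stated as a consequence of Corollary \ref{cor:discon}, and the cleanest write-up runs through it: Corollary \ref{cor:discon} already asserts that $I_{\ultsp}$ is not continuous, and if the bound $\ugrdis \le c \cdot \grdis$ held for a fixed $c$ then $I_{\ultsp}$ would be $c$-Lipschitz, hence continuous, a contradiction. So the argument reduces to: assume the negation of the corollary, deduce the Lipschitz bound, deduce continuity of $I_{\ultsp}$, and quote Corollary \ref{cor:discon}. The only point needing a word of care is the quantifier structure --- the corollary asserts, for \emph{each} $c$, the existence of a witnessing pair, and its negation is that \emph{some} $c$ works as a uniform bound; this is exactly the hypothesis that makes $I_{\ultsp}$ globally $c$-Lipschitz, so there is no gap.

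\textbf{Main obstacle.} The substantive input is the failure of continuity of $I_{\ultsp}$ (Corollary \ref{cor:discon}), which in turn rests on the path-connectedness of $(\ultsp, \grdis)$ established via the simplex embeddings of Theorem \ref{thm:topemb} contrasted with the ultrametric (hence totally disconnected) nature of $(\ultsp, \ugrdis)$. Given those results, the present corollary is a short logical unwinding; the only thing to be careful about is phrasing the contrapositive correctly and noting that $c \ge 2$ is harmless since the inequality $\grdis \le \ugrdis$ is automatic and does not interfere with the upper bound.
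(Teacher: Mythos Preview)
Your proposal is correct and takes essentially the same approach as the paper: assume for contradiction a uniform bound $\ugrdis\le c\cdot\grdis$, deduce that $I_{\ultsp}$ is continuous, and contradict Corollary~\ref{cor:discon}. Your first two paragraphs simply unpack the proof of Corollary~\ref{cor:discon} (path-connectedness of $(\ultsp,\grdis)$ versus the ultrametric, hence totally disconnected, nature of $(\ultsp,\ugrdis)$), while your third paragraph is verbatim the paper's short argument.
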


The organization of this paper is as follows: 
In Section \ref{sec:pre}, 
we prepare and explain the basic 
definitions and statements on metric spaces. 
The definitions of the five dimensions  and 
$\grdis$  appearing 
in Section \ref{sec:intro} are given. 
In Section \ref{sec:Cantor}, 
we define 
the $(\mathbf{m}, \alpha)$-Cantor ultrametric space, and we give formulas to calculate  dimensions of such an ultrametric space. 
Using that formulas, 
first we  construct spaces stated in  Theorem \ref{thm:prescribed} in cases of $a_{i}\in \{0, 1, \infty\}$ for all $i\in \{1, 2, 3, 4\}$. 
By taking a direct sum of those spaces  in specific cases, 
we prove Theorems \ref{thm:prescribed} and 
\ref{thm:prescribed2}. 
In Section \ref{sec:embed}, 
for each $n\in \zz_{\ge 1}$, 
we construct   topological embeddings of 
an arbitrary compact metrizable space into $\dimset(l, a_{1}, a_{2}, a_{3}, a_{4})$ and $\ultsp$. 
We also discuss 
its applications to the non-Archimedean Gromov--Hausdorff space.

\begin{ac}
The author would like to thank Takumi Yokota  
for 
raising  questions,  
for the many stimulating conversations, and 
for the many  helpful comments. 
The author would also like thank to the 
referee for helpful suggestions and 
comments. 
\end{ac}

\section{Preliminaries}\label{sec:pre}

In this section, 
we prepare 
and explain 
the
basic concepts 
and  
statements 
on metric spaces. 
\subsection{Generalities}
For $k\in \zz$, 
we denote by $\zz_{\ge k}$ the set of all 
integers greater than or equal to $k$. 
For a set $S$, we denote by $\card(S)$ the 
cardinality of $S$.
Let $(X,d)$ be a metric space.
For  $x\in X$ and for  $r\in (0,\infty)$, 
we denote by $B(x, r)$ the closed  ball centered at $x$ with 
 radius $r$. 
For a subset $A$, 
we denote by 
$\delta_d(A)$ 
the diameter of 
$A$.

For two metric spaces $
(X, d)$ 
and 
$(Y, e)$, 
we denote by 
$d\times_{\infty} e$  
the 
$\ell^{\infty}$-product metric  
defined by 
$(d\times_{\infty} e)((x, y), (u, v))=
d(x, u) \lor e(y, v)$. 
Note that 
$d\times_{\infty} e$ 
generates the product topology of 
$X\times Y$. 

In this paper, 
we sometimes use the disjoint union 
$\coprod_{i\in I} X_{i}$ 
of a non-disjoint family 
$\{X_{i}\}_{i\in I}$. 
 Whenever we consider the disjoint  union  
 $\coprod_{i\in I}X_{i}$ of 
 a family 
 $\{X_{i}\}_{i\in I}$ 
 of sets 
 (this family is not necessarily disjoint), 
we  identify the family 
$\{X_{i}\}_{i\in I}$ 
with  its disjoint copy unless otherwise stated. 
If each 
$X_{i}$ 
is a topological space, 
we consider that 
$\coprod_{i\in I}X_{i}$ 
is equipped with 
the direct sum topology.

\subsection{Dimensions}
We explain dimensions appearing in 
Section \ref{sec:intro}. 

\subsubsection{The topological dimension}
In this paper, the \emph{topological dimension} means 
the covering dimension. For a separable metrizable space, the topological dimension is equal to 
the large and small inductive dimensions. 
We refer the readers to 
\cite{HW1948}, \cite{P1975}, 
\cite{Nagata1983}, and  \cite{Cdimension} for the details. 

\subsubsection{The Hausdorff dimension}
Let $(X, d)$ be a metric space. 
For $\delta \in (0,\infty)$, 
we denote by $\mathbf{F}_{\delta}(X, d)$ the set of all subsets of $X$ with diameter smaller than $\delta$. 
For  $s\in [0,\infty)$, and $\delta\in (0, \infty)$, 
we define the measure $\mathcal{H}_{\delta}^{s}$ 
on $X$ as
\[
\mathcal{H}_{\delta}^s(A)=\inf\left\{\, \sum_{i=1}^{\infty}\delta_{d}(A_i)^s\ \middle| \ A\subset\bigcup_{i=1}^{\infty}A_i,\ A_i\in \mathbf{F}_{\delta}(X, d)\,\right\}.
\]
For $s\in (0, \infty)$
we define 
the \emph{$s$-dimensional Hausdorff measure 
 $\mathcal{H}^s$}
on $(X, d)$  as 
$\mathcal{H}^s(A)=
\sup_{\delta\in (0,\infty)}\mathcal{H}_{\delta}^s(A)$. 
We denote by $\hdim(X, d)$ 
the \emph{Hausdorff dimension of $(X, d)$} defined as
\begin{align*}
\hdim(X, d)&=\sup \{\,s\in[0,\infty)\mid 
\mathcal{H}^s(X)=\infty\,\}\\
             &=\inf\{\,s\in [0,\infty)\mid 
             \mathcal{H}^s(X)=0\,\}.
\end{align*}

\subsubsection{The packing dimension}
Let $(X, d)$ be a metric space. 
For a subset $A$ of $X$, and for $\delta\in (0, \infty)$, 
we denote by $\mathbf{Pa}_{\delta}(A)$ the set of all 
finite or countable sequence $\{r_{i}\}_{i=1}^{N} (N\in \zz_{\ge 1}\cup\{\infty\})$ in $(0, \delta)$
 for which 
there exists a sequence $\{x_{i}\}_{i=1}^{N}$ in $A$ such 
that if $i\neq j$, then we have 
$B(x_{i}, r_{i})\cap B(x_{j}, r_{j})=\emptyset$. 
For $s\in [0, \infty)$,  $\delta\in (0, \infty)$,
and  a subset $A$ of $X$,  
we define the quantity  $\widetilde{\mathcal{P}}_{\delta}^{s}(A)$ by 
\[
\widetilde{\mathcal{P}}_{\delta}^{s}(A)=
\sup\left\{\, \sum_{i=1}^{N}r_{i}^{s}\ 
\middle| \ \{r_{i}\}_{i=1}^{N} \in \mathbf{Pa}_{\delta}(A)
\, \right\}. 
\]
We then define the \emph{$s$-dimensional pre-packing measure 
$\widetilde{\mathcal{P}}_{0}^{s}$} on $(X, d)$ as 
$\widetilde{\mathcal{P}}_{0}^{s}(A)
=\inf_{\delta\in (0, \infty)}
\widetilde{\mathcal{P}}_{\delta}^{s}$, 
and 
we define the 
\emph{$s$-dimensional packing measure } 
$\mathcal{P}^{s}$ on 
$(X, d)$
 as
\[
\mathcal{P}^{s}(A)=
\inf\left\{\, \sum_{i=1}^{\infty}\widetilde{\mathcal{P}}_{0}^{s}(S_{i})\ 
\middle|\  A\subset \bigcup_{i=1}^{\infty}S_{i}
\, \right\}. 
\]
We denote by $\pdim(X, d)$ 
the \emph{packing  dimension of $(X, d)$} defined as
\begin{align*}
\pdim(X, d)&=\sup \{\,s\in[0,\infty)\mid 
\mathcal{P}^s(X)=\infty\,\}\\
             &=\inf\{\,s\in [0,\infty)\mid 
             \mathcal{P}^s(X)=0\,\}.
\end{align*}

\begin{rmk}
Our definition of the packing measure is of 
\emph{radius-type}. 
There is another definition of \emph{diameter-type}. 
If we adopt the diameter-type definition, 
then the statement (3) in Theorem \ref{thm:locdim} does not hold true in general. 
For the difference between two definitions, we refer the readers to \cite[Lemma 1.5.7]{joyce1995packing} and \cite{cutler1995density}. 
These two types of the packing measure are 
often treated indistinguishably, usually by imposing a regularity condition such as 
the assumption that  the whole metric space is
a Euclidean space, or 
 geodesic. 
\end{rmk}

\subsubsection{Box dimensions}
For a metric space $(X, d)$, and for $r\in (0, \infty)$, 
 a subset $A$ of $X$ is said to be an 
 \emph{$r$-net} if 
 $X=\bigcup_{a\in A}B(a, r)$. 
 We denote by $\mnum_{d}(X, r)$ the least  cardinality  of  $r$-nets of $X$.  
We define 
the \emph{upper box dimension $\ubdim(X, d)$} and 
the \emph{lower box dimension $\lbdim(X, d)$}  by 
\begin{align*}
\ubdim(X, d)&=\limsup_{r\to 0}\frac{\log \mnum_{d}(X, r)}{-\log r}, \\
\lbdim(X, d)&=\liminf_{r\to 0}\frac{\log \mnum_{d}(X, r)}{-\log r}. 
\end{align*}

\begin{rmk}
In this paper, 
we do not mainly treat the lower box dimension. 
\end{rmk}

\subsubsection{The Assouad dimension}

For a metric space $(X, d)$, 
we denote by  $\adim(X, d)$ 
the \emph{Assouad dimension 
of $(X, d)$}
defined  by the infimum of all 
$\lambda\in (0, \infty)$ for which 
there exists $C\in (0, \infty)$ such that 
for all $R, r\in (0, \infty)$ with $R>r$ and for all $x\in X$
we have 
\[
\mnum_{d}(B(x, R), r)\le 
C\left(\frac{R}{r}\right)^{\lambda}. 
\]
If such $\lambda$ does not exist, we define 
$\adim(X, d)=\infty$. 
We say that a metric space is \emph{doubling} if 
its Assouad dimension is finite. 

We show some properties of the Assouad dimension. 
By the definition of the Assouad dimension, 
we first  obtain:
\begin{lem}\label{lem:upadim}
Let $\lambda\in (0, 1)$, and $N\in \zz_{\ge 2}$. 
Let $(X, d)$ be a metric space. 
If every closed ball with radius $r$ 
can be covered by at most $N$ many closed balls with radius $\lambda r$, then we have 
\[
\adim(X, d)\le \log N/\log \lambda^{-1}.
\] 
\end{lem}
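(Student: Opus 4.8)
The plan is to show directly that the covering hypothesis iterated $k$ times bounds $\mnum_{d}(B(x,R),r)$ in the form required by the definition of the Assouad dimension, with exponent $\lambda_{0} = \log N/\log\lambda^{-1}$. First I would fix $x \in X$ and radii $R > r > 0$, and choose $k \in \zz_{\ge 1}$ to be the least integer with $\lambda^{k} R \le r$, i.e. $k = \lceil \log(R/r)/\log\lambda^{-1} \rceil$. Applying the hypothesis once, $B(x,R)$ is covered by at most $N$ balls of radius $\lambda R$; applying it to each of those, $B(x,R)$ is covered by at most $N^{2}$ balls of radius $\lambda^{2} R$; by an obvious induction on $k$, $B(x,R)$ is covered by at most $N^{k}$ closed balls of radius $\lambda^{k} R \le r$. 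Hence
\[
\mnum_{d}(B(x,R), r) \le N^{k}.
\]

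Next I would convert the bound $N^{k}$ into a power of $R/r$. Since $k \ge 1$ we have $\lambda^{k-1} R > r$, so $k - 1 < \log(R/r)/\log\lambda^{-1}$, that is $k < 1 + \log(R/r)/\log\lambda^{-1}$. Therefore
\[
N^{k} < N \cdot N^{\log(R/r)/\log\lambda^{-1}} = N \cdot \left(\frac{R}{r}\right)^{\log N/\log\lambda^{-1}},
\]
where the last equality is the identity $N^{t} = (R/r)^{t \log N / \log(R/r)}$ applied with $t = \log(R/r)/\log\lambda^{-1}$, using $\lambda \in (0,1)$ so that $\log\lambda^{-1} > 0$. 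Combining the two displays, for every $x \in X$ and all $R > r > 0$,
\[
\mnum_{d}(B(x,R), r) \le N \cdot \left(\frac{R}{r}\right)^{\log N/\log\lambda^{-1}}.
\]
This is exactly the inequality in the definition of the Assouad dimension with constant $C = N$ and exponent $\lambda_{0} = \log N/\log\lambda^{-1}$; since for arbitrary $\lambda' > \lambda_{0}$ the same inequality holds a fortiori (the factor $(R/r)^{\lambda'-\lambda_{0}} \ge 1$ when $R > r$), every $\lambda' > \log N/\log\lambda^{-1}$ is admissible, and taking the infimum gives $\adim(X,d) \le \log N/\log\lambda^{-1}$.

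I do not expect a genuine obstacle here; the only points requiring minor care are the bookkeeping in the choice of $k$ (making sure the "least $k$ with $\lambda^{k}R \le r$" is at least $1$, which holds because $R > r$ and $\lambda < 1$) and the algebraic manipulation turning $N^{k}$ into a power of $R/r$ with the correct exponent. One should also note the degenerate edge cases — if $N = 1$ the hypothesis forces a single point and the bound is trivial, and if $R/r$ is close to $1$ the estimate still holds since then $k=1$ and $N^{1} = N \le N(R/r)^{\lambda_{0}}$ is immediate. These are routine and would be dispatched in a line.
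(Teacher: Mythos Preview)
The paper states Lemma~\ref{lem:upadim} without proof, treating it as a standard fact about the Assouad dimension; your iterated-covering argument is exactly the usual proof and is correct. One tiny technical wrinkle you might note in passing: with the paper's convention that $\mnum_{d}(B(x,R),r)$ counts $r$-nets \emph{inside} the subspace $B(x,R)$, the centers of your $N^{k}$ balls may lie outside $B(x,R)$; replacing each center by a point of $B(x,R)$ in the same ball costs at most a factor $2$ in the radius, which only affects the constant $C$ and not the exponent, so the conclusion is unchanged.
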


\begin{df}
Let $(X, d)$ be a metric space. 
We define a function $\Theta_{X, d}\colon (0, 1)\to [1, \infty]$
by 
$\Theta_{X, d}(\epsilon)=
\sup_{x\in X}\sup_{R\in (0, \infty)} 
\mnum_{d}(B(x, R), \epsilon R)$. 
\end{df}
Combining the definitions of 
$\Theta_{X, d}(\epsilon)$ and the Assouad dimension, 
we can verify: 

\begin{lem}\label{lem:adimtheta}
Let $(X, d)$ be a metric space. Then, 
the Assouad dimension
$\adim(X, d)$ is equal to
the  infimum of all 
$\lambda\in (0, \infty)$ for which 
there exists $C\in (0, \infty)$ such that 
for all $\epsilon \in (0, 1)$
we have 
$\Theta_{X, d}(\epsilon)\le C\epsilon^{-\lambda}$. 
\end{lem}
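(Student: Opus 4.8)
The plan is to prove that $\adim(X,d)$, as originally defined, coincides with the infimum over $\lambda$ such that $\Theta_{X,d}(\epsilon)\le C\epsilon^{-\lambda}$ for all $\epsilon\in(0,1)$, by showing the two infima bound each other. Denote by $\asa$ the original Assouad dimension and by $\asb$ the quantity in the lemma. First I would prove $\asb\le\asa$: suppose $\lambda>\asa$, so there is $C$ with $\mnum_{d}(B(x,R),r)\le C(R/r)^{\lambda}$ for all $R>r>0$ and all $x\in X$. Given $\epsilon\in(0,1)$, apply this with $r=\epsilon R$ (noting $\epsilon R<R$), obtaining $\mnum_{d}(B(x,R),\epsilon R)\le C\epsilon^{-\lambda}$; taking the supremum over $x$ and $R$ yields $\Theta_{X,d}(\epsilon)\le C\epsilon^{-\lambda}$. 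Hence $\lambda$ is admissible for $\asb$, and since $\lambda>\asa$ was arbitrary, $\asb\le\asa$.

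For the reverse inequality $\asa\le\asb$, suppose $\lambda>\asb$, so there is $C$ with $\Theta_{X,d}(\epsilon)\le C\epsilon^{-\lambda}$ for all $\epsilon\in(0,1)$. I need to bound $\mnum_{d}(B(x,R),r)$ for arbitrary $R>r>0$. If $r<R$, set $\epsilon=r/R\in(0,1)$; then by definition of $\Theta_{X,d}$ we get $\mnum_{d}(B(x,R),r)=\mnum_{d}(B(x,R),\epsilon R)\le\Theta_{X,d}(\epsilon)\le C\epsilon^{-\lambda}=C(R/r)^{\lambda}$. This is exactly the defining estimate for the original Assouad dimension with the same constant $C$ and exponent $\lambda$, so $\adim(X,d)\le\lambda$; letting $\lambda\downarrow\asb$ gives $\asa\le\asb$.

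The argument is essentially a bookkeeping matching between the substitution $r=\epsilon R$ and the supremum defining $\Theta_{X,d}$, so there is no serious obstacle; the main point requiring care is the boundary behavior. One should check that the restriction $\epsilon\in(0,1)$ versus $R>r>0$ matches up correctly (it does, via $\epsilon=r/R$), and that the suprema in the definition of $\Theta_{X,d}$ — taken over all $x\in X$ and all $R\in(0,\infty)$ — are exactly what is needed to make the estimate uniform, which it is. The case $\mnum_{d}(B(x,R),\epsilon R)=\infty$ for some $\epsilon$ is handled automatically since then both $\Theta_{X,d}(\epsilon)=\infty$ and the relevant covering bound fails, so such $\lambda$ is admissible for neither quantity. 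Hence $\asa=\asb$, completing the proof.
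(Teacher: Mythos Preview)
Your proof is correct; it is precisely the immediate reformulation via the substitution $r=\epsilon R$ that the paper has in mind, which is why the paper states the lemma without proof. The only implicit step you spelled out that the paper omits is that both admissible sets of exponents are upward closed (since $R/r>1$ and $\epsilon<1$), which justifies passing from ``$\lambda$ greater than the infimum'' to ``$\lambda$ admissible''; this is routine and your treatment is fine.
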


Lemma \ref{lem:upadim} implies: 
\begin{lem}\label{lem:doublingiff}
For every  metric space $(X, d)$,  
 the following statements are equivalent to each other. 
\begin{enumerate}
\item The space $(X, d)$ is doubling. \label{item:doubling}
\label{item:adim1} 
\item 
There exists $\epsilon\in (0, 1)$ such that $\Theta_{X, d}(\epsilon)<\infty$. \label{item:doubling2}
\item 
For all $\epsilon\in (0, 1)$, we have 
$\Theta_{X, d}(\epsilon)<\infty$. \label{item:doubling3}

\end{enumerate}
\end{lem}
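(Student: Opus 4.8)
The plan is to prove the chain of implications $(\ref{item:doubling3}) \To (\ref{item:doubling2}) \To (\ref{item:doubling}) \To (\ref{item:doubling3})$, using the two preceding lemmas to handle the nontrivial steps. The implication $(\ref{item:doubling3}) \To (\ref{item:doubling2})$ is immediate, since if $\Theta_{X, d}(\epsilon) < \infty$ for \emph{all} $\epsilon \in (0, 1)$, then in particular this holds for some single $\epsilon$, e.g. $\epsilon = 1/2$.

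For $(\ref{item:doubling2}) \To (\ref{item:doubling})$, I would argue as follows. Suppose $\Theta_{X, d}(\epsilon) < \infty$ for some $\epsilon \in (0, 1)$, and set $N := \lceil \Theta_{X, d}(\epsilon) \rceil \in \zz_{\ge 1}$; we may assume $N \ge 2$ after enlarging if necessary. The definition of $\Theta_{X, d}$ says precisely that for every $x \in X$ and every $R \in (0, \infty)$, the ball $B(x, R)$ admits an $(\epsilon R)$-net of cardinality at most $N$, i.e. $B(x, R)$ can be covered by at most $N$ closed balls of radius $\epsilon R$ (centered at points of $B(x, R)$, though the centers do not matter for the covering count). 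Applying this with $\lambda := \epsilon$ and invoking Lemma \ref{lem:upadim}, we conclude that $\adim(X, d) \le \log N / \log \epsilon^{-1} < \infty$, so $(X, d)$ is doubling.

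For $(\ref{item:doubling}) \To (\ref{item:doubling3})$, suppose $\adim(X, d) = \lambda_0 < \infty$. Fix any $\lambda > \lambda_0$; by Lemma \ref{lem:adimtheta} there exists $C \in (0, \infty)$ such that $\Theta_{X, d}(\epsilon) \le C \epsilon^{-\lambda}$ for all $\epsilon \in (0, 1)$. Since $C \epsilon^{-\lambda} < \infty$ for each fixed $\epsilon \in (0, 1)$, we obtain $\Theta_{X, d}(\epsilon) < \infty$ for all $\epsilon \in (0, 1)$, which is $(\ref{item:doubling3})$. This closes the cycle and proves the equivalence.

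The only place requiring a little care is $(\ref{item:doubling2}) \To (\ref{item:doubling})$: one must pass from the finiteness of $\Theta_{X, d}(\epsilon)$ (a supremum over all $x$ and all scales $R$, with the \emph{fixed} contraction ratio $\epsilon$) to the hypothesis of Lemma \ref{lem:upadim}, which asks that every closed ball of radius $r$ be covered by at most $N$ closed balls of radius $\lambda r$ with $\lambda \in (0, 1)$ and $N \in \zz_{\ge 2}$. This is essentially a matter of unwinding definitions — taking $\lambda = \epsilon$, $N = \max\{2, \lceil \Theta_{X, d}(\epsilon)\rceil\}$, and observing that a minimal $(\epsilon r)$-net of $B(x, r)$ gives the required cover — but it is the one step where the two lemmas must be lined up correctly, so I expect it to be the main (mild) obstacle.
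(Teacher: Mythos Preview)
Your proof is correct and matches the paper's approach: the paper does not give a detailed argument but simply records that Lemma~\ref{lem:upadim} implies the result, and your chain of implications correctly identifies Lemma~\ref{lem:upadim} as the key ingredient for $(\ref{item:doubling2})\Rightarrow(\ref{item:doubling})$, with the remaining implications following directly from the definitions (or, as you do, from Lemma~\ref{lem:adimtheta}).
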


The next lemma stats that,  to calculate 
the Assouad dimension, 
we only need the information of the 
behavior of  
$\Theta_{X, d}(\epsilon)$ on a restricted  domain. 
\begin{lem}\label{lem:adimcut}
Let $(X, d)$ be a metric space. 
Then $\adim(X, d)$ is equal to  the infimum of all 
 $\lambda\in (0, \infty)$ satisfying that 
there exist $C\in (0, \infty)$ and 
$\epsilon_{0}\in (0, 1)$ such that for all 
$\epsilon \in (0, \epsilon_{0}]$ we have 
$\Theta_{X, d}(\epsilon)\le C\epsilon^{-\lambda}$. 
\end{lem}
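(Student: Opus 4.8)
The plan is to reduce this to Lemma \ref{lem:adimtheta}, whose only difference is that the bound $\Theta_{X,d}(\epsilon)\le C\epsilon^{-\lambda}$ is required for \emph{all} $\epsilon\in(0,1)$ rather than only for $\epsilon$ below some threshold $\epsilon_{0}$. The bridge between the two characterizations is the elementary remark that $\Theta_{X,d}$ is non-increasing on $(0,1)$: for fixed $x\in X$ and $R\in(0,\infty)$, a closed ball of radius $\epsilon_{1}R$ is contained in the concentric closed ball of radius $\epsilon_{2}R$ whenever $\epsilon_{1}\le\epsilon_{2}$, so any covering of $B(x,R)$ by closed balls of radius $\epsilon_{1}R$ is also one by closed balls of radius $\epsilon_{2}R$; hence $\mnum_{d}(B(x,R),\epsilon_{2}R)\le\mnum_{d}(B(x,R),\epsilon_{1}R)$, and taking suprema over $x$ and $R$ gives $\Theta_{X,d}(\epsilon_{2})\le\Theta_{X,d}(\epsilon_{1})$.

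Write $\alpha$ for the infimum in the statement of this lemma. One inequality is immediate: if $\lambda$ satisfies $\Theta_{X,d}(\epsilon)\le C\epsilon^{-\lambda}$ for all $\epsilon\in(0,1)$, then the same $C$ together with any choice of $\epsilon_{0}\in(0,1)$ witnesses admissibility in the sense of the present lemma, so every $\lambda$ admissible for Lemma \ref{lem:adimtheta} is admissible here and therefore $\alpha\le\adim(X,d)$. For the reverse inequality I would take $\lambda$ admissible in the sense of this lemma, witnessed by $C\in(0,\infty)$ and $\epsilon_{0}\in(0,1)$; the hypothesis already forces $\Theta_{X,d}(\epsilon_{0})\le C\epsilon_{0}^{-\lambda}<\infty$. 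Setting $C'=C\epsilon_{0}^{-\lambda}$, which satisfies $C'\ge C$ because $\epsilon_{0}^{-\lambda}\ge 1$, we get for $\epsilon\in(0,\epsilon_{0}]$ that $\Theta_{X,d}(\epsilon)\le C\epsilon^{-\lambda}\le C'\epsilon^{-\lambda}$, while for $\epsilon\in(\epsilon_{0},1)$ monotonicity gives $\Theta_{X,d}(\epsilon)\le\Theta_{X,d}(\epsilon_{0})\le C'\le C'\epsilon^{-\lambda}$, using $\epsilon^{-\lambda}\ge 1$. Thus $\Theta_{X,d}(\epsilon)\le C'\epsilon^{-\lambda}$ for all $\epsilon\in(0,1)$, so $\lambda$ is admissible for Lemma \ref{lem:adimtheta}, whence $\adim(X,d)\le\alpha$. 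Combining the two inequalities yields the claim; the degenerate non-doubling case, in which no $\lambda$ is admissible for either lemma, is covered automatically since both quantities are then $\infty$ (cf.\ Lemma \ref{lem:doublingiff}).

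I do not foresee a genuine obstacle here: the whole content is concentrated in the monotonicity of $\Theta_{X,d}$ and in the fact that the hypothesis itself guarantees $\Theta_{X,d}(\epsilon_{0})<\infty$. The only point demanding a little care is the bookkeeping of the constant when enlarging the range of validity of the bound from $(0,\epsilon_{0}]$ to all of $(0,1)$, which is handled by replacing $C$ with $C'=C\epsilon_{0}^{-\lambda}$ as above.
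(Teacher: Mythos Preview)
Your proof is correct. Both you and the paper establish the easy inequality $\alpha\le\adim(X,d)$ directly from the definitions and then extend a bound valid on $(0,\epsilon_{0}]$ to all of $(0,1)$ in order to invoke Lemma~\ref{lem:adimtheta}; the difference lies in how this extension is carried out. The paper argues that $\Theta_{X,d}(\epsilon_{0})<\infty$ forces $(X,d)$ to be doubling via Lemma~\ref{lem:doublingiff}, obtains from doubling a global bound $\Theta_{X,d}(\epsilon)\le C_{1}\epsilon^{-\sigma}$ with some auxiliary exponent $\sigma$, and then patches the two bounds together with $C=\max\{1,C_{0},C_{1}\epsilon_{0}^{-\sigma}\}$. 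You instead observe that $\Theta_{X,d}$ is non-increasing on $(0,1)$ and use this monotonicity directly to bound $\Theta_{X,d}(\epsilon)\le\Theta_{X,d}(\epsilon_{0})\le C\epsilon_{0}^{-\lambda}$ for $\epsilon\in(\epsilon_{0},1)$, enlarging only the constant to $C'=C\epsilon_{0}^{-\lambda}$. Your route is more elementary in that it avoids the detour through Lemma~\ref{lem:doublingiff} and the auxiliary exponent $\sigma$; the paper's route has the minor advantage that it makes the role of doubling explicit, which fits with the surrounding discussion (Corollary~\ref{cor:infiniteadimcut}). Either way the constants and the handling of the empty-admissible-set case are fine.
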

\begin{proof}
Let $\beta$ be the infimum stated in the lemma. 
By the definitions of $\beta$ and the Assouad dimension, 
we have $\beta\le \adim(X, d)$. 
To prove the opposite inequality, 
we take 
$\lambda\in (\beta, \infty)$. 
Then, 
there exist $C_{0}\in (0, \infty)$ and 
$\epsilon_{0}\in (0, 1)$ such that every 
$\epsilon \in (0, \epsilon_{0}]$ 
satisfies 
$\Theta_{X, d}(\epsilon)\le C_{0}\epsilon^{-\lambda}$. 
Since 
$\Theta_{X, d}(\epsilon_{0})\le C_{0}\epsilon_{0}^{-\lambda}<\infty$, 
 Lemma 
\ref{lem:doublingiff} shows that
the space $(X, d)$ is doubling. 
Thus there exists $\sigma\in (0, \infty)$ and 
$C_{1}\in (0, \infty)$ such that for all 
$\epsilon \in (0, 1)$ we have $\Theta_{X, d}(\epsilon)\le C_{1}\epsilon^{-\sigma}$. 
Put 
$C=\max\{1, C_{0}, C_{1}\epsilon_{0}^{-\sigma}\}$. 
We obtain 
$\Theta_{X, d}(\epsilon)\le C\epsilon^{-\lambda}$ 
for all $\epsilon \in (0, 1)$. 
Therefore, from Lemma \ref{lem:adimtheta}, 
it follows that 
$\adim(X, d)\le \beta$. 
\end{proof}

Lemmas \ref{lem:doublingiff} and  \ref{lem:adimcut} imply the characterization of 
non-doubling metric spaces:
\begin{cor}\label{cor:infiniteadimcut}
For every metric space $(X, d)$,
 the following statements are equivalent to each other. 
\begin{enumerate}
\item 
The space 
$(X, d)$ is not doubling.  \label{item:adiminf}
\item 
There exists $\epsilon\in (0, 1)$ such that 
$\Theta_{X, d}(\epsilon)=\infty$. 
\item 
For all $\epsilon \in (0, 1)$, we have 
$\Theta_{X, d}(\epsilon)=\infty$. 
\item
There exists a sequence $\{\epsilon_{n}\}_{n\in \zz_{\ge 0}}$ in $(0, 1)$ such that $\epsilon_{n}\to 0$ as $n\to \infty$ and 
$\epsilon_{n}^{-n}< \Theta_{X, d}(\epsilon_{n})$ for all $n\in \zz_{\ge 0}$. \label{item:adiminf3}
\end{enumerate}
\end{cor}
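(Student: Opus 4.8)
The plan is to establish the equivalences in Corollary \ref{cor:infiniteadimcut} by transporting negations through the already-proven Lemmas \ref{lem:doublingiff} and \ref{lem:adimcut}, together with one small new observation that handles the quantitative statement (4). First I would note that the equivalence of (1), (2), and (3) is literally the contrapositive of Lemma \ref{lem:doublingiff}: statement (1) negates item \eqref{item:doubling} there, statement (2) negates item \eqref{item:doubling3} (``for all $\epsilon$, $\Theta_{X,d}(\epsilon)<\infty$'' fails exactly when ``there exists $\epsilon$ with $\Theta_{X,d}(\epsilon)=\infty$''), and statement (3) negates item \eqref{item:doubling2}. So (1)$\iff$(2)$\iff$(3) requires no new work beyond citing that lemma.

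The substantive part is the equivalence with (4). The implication (4)$\To$(2) is immediate: if $\epsilon_n\to 0$ and $\epsilon_n^{-n}<\Theta_{X,d}(\epsilon_n)$, pick any $n$ with $\epsilon_n<1$ (all of them, by hypothesis) and observe that having arbitrarily large finite values of $\Theta$ at points going to $0$ is incompatible with doubling — more precisely, if $(X,d)$ were doubling, Lemma \ref{lem:doublingiff} would give $\sigma, C$ with $\Theta_{X,d}(\epsilon)\le C\epsilon^{-\sigma}$ for all $\epsilon\in(0,1)$, but then $\epsilon_n^{-n}<\Theta_{X,d}(\epsilon_n)\le C\epsilon_n^{-\sigma}$ forces $\epsilon_n^{\sigma-n}<C$, which fails once $n>\sigma$ since $\epsilon_n<1$ makes $\epsilon_n^{\sigma-n}=\epsilon_n^{-(n-\sigma)}\to\infty$. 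Hence $(X,d)$ is not doubling, i.e.\ (1) holds, and we already have (1)$\To$(2).

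For the reverse direction I would prove (1)$\To$(4), which is where the main effort lies. Assume $(X,d)$ is not doubling. By Lemma \ref{lem:adimcut}, $\adim(X,d)=\infty$ means that for \emph{every} $\lambda\in(0,\infty)$, every $C\in(0,\infty)$, and every $\epsilon_0\in(0,1)$, the bound $\Theta_{X,d}(\epsilon)\le C\epsilon^{-\lambda}$ fails for some $\epsilon\in(0,\epsilon_0]$. Fix $n\in\zz_{\ge 0}$. Apply this with $\lambda=n$, $C=1$, and $\epsilon_0=1/(n+1)$ (so that $\epsilon_0\to 0$ as $n$ grows): we obtain some $\epsilon_n\in(0,1/(n+1)]$ with $\Theta_{X,d}(\epsilon_n)>\epsilon_n^{-n}$. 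Since $\epsilon_n\le 1/(n+1)$, automatically $\epsilon_n\to 0$ as $n\to\infty$, and we also have $\epsilon_n\in(0,1)$. This yields exactly the sequence required in (4).

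I expect the main obstacle — really the only point needing care — to be choosing the auxiliary parameter $\epsilon_0$ in the application of Lemma \ref{lem:adimcut} so that the resulting $\epsilon_n$ genuinely tends to $0$; taking $\epsilon_0=1/(n+1)$ (or any sequence decreasing to $0$) resolves this cleanly. Everything else is bookkeeping: the closed-ball/net definitions of $\mnum_d$ and $\Theta_{X,d}$ are used only implicitly through the cited lemmas, and no new estimate on covering numbers is needed. I would write the proof as: ``(1)$\iff$(2)$\iff$(3) by Lemma \ref{lem:doublingiff}; (4)$\To$(1) by the doubling bound of Lemma \ref{lem:doublingiff} applied to a fixed large $n$; (1)$\To$(4) by Lemma \ref{lem:adimcut} with $\lambda=n$, $C=1$, $\epsilon_0=1/(n+1)$.''
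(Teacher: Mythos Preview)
Your argument is correct and matches the paper's own approach, which simply cites Lemmas \ref{lem:doublingiff} and \ref{lem:adimcut}. Two small remarks: for $(4)\Rightarrow(1)$ the power-law bound $\Theta_{X,d}(\epsilon)\le C\epsilon^{-\sigma}$ comes from the definition of $\adim$ (equivalently Lemma \ref{lem:adimtheta} or \ref{lem:adimcut}), not from Lemma \ref{lem:doublingiff} itself; and for $(1)\Rightarrow(4)$ you can bypass Lemma \ref{lem:adimcut} entirely, since once (3) is in hand you may simply set $\epsilon_n=1/(n+2)$ and use $\Theta_{X,d}(\epsilon_n)=\infty>\epsilon_n^{-n}$ (this also sidesteps the $n=0$ edge case where $\lambda=n\notin(0,\infty)$).
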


\begin{df}
For a metric space $(X, d)$, 
we define 
$\eta_{X, d}\colon
(0, 1)\to [0, \infty]$ 
by 
\[
\eta_{X, d}(\epsilon)=
\frac{\log \Theta_{X, d}(\epsilon)}{-\log \epsilon}. 
\]
\end{df}

Using the function $\eta_{X, d}$, 
we find a simple formula of 
the Assouad dimension. 
\begin{prop}\label{prop:adimform}
For every metric space $(X, d)$, 
 we have 
\begin{align*}
\adim(X, d)
=
\limsup_{\epsilon \to 0}
\eta_{X, d}(\epsilon). 
\end{align*}
\end{prop}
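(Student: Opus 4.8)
The plan is to prove the two inequalities
\[
\adim(X, d)\le \limsup_{\epsilon\to 0}\eta_{X, d}(\epsilon)
\quad\text{and}\quad
\adim(X, d)\ge \limsup_{\epsilon\to 0}\eta_{X, d}(\epsilon)
\]
separately, using Lemma \ref{lem:adimcut} to reduce the Assouad dimension to the small-$\epsilon$ behaviour of $\Theta_{X, d}$, and translating the polynomial bounds $\Theta_{X, d}(\epsilon)\le C\epsilon^{-\lambda}$ into statements about $\eta_{X, d}(\epsilon)=\log\Theta_{X, d}(\epsilon)/(-\log\epsilon)$ by taking logarithms.

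For the inequality $\adim(X, d)\le L$, where $L:=\limsup_{\epsilon\to 0}\eta_{X, d}(\epsilon)$, I would first dispose of the case $L=\infty$, which is trivial, and assume $L<\infty$. Fix any $\lambda>L$. By the definition of $\limsup$, there is $\epsilon_{0}\in(0,1)$ such that $\eta_{X, d}(\epsilon)\le \lambda$ for all $\epsilon\in(0,\epsilon_{0}]$; unwinding the definition of $\eta_{X, d}$, this says $\log\Theta_{X, d}(\epsilon)\le \lambda\cdot(-\log\epsilon)$, i.e. $\Theta_{X, d}(\epsilon)\le \epsilon^{-\lambda}$ for all such $\epsilon$. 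Thus the hypothesis of Lemma \ref{lem:adimcut} holds with $C=1$ and this $\epsilon_{0}$, so $\adim(X, d)\le\lambda$. Letting $\lambda\downarrow L$ gives $\adim(X, d)\le L$. (One small subtlety: if $\Theta_{X, d}(\epsilon)=\infty$ for arbitrarily small $\epsilon$, then by Corollary \ref{cor:infiniteadimcut} the space is not doubling, so $\adim(X, d)=\infty$, and by item \eqref{item:adiminf3} of that corollary one gets $\eta_{X, d}(\epsilon_{n})>n$ along a sequence $\epsilon_{n}\to 0$, forcing $L=\infty$; so both sides are $\infty$ and the identity holds. I would handle this degenerate case first, and afterwards assume $\Theta_{X, d}$ is finite near $0$.)

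For the reverse inequality, set $A:=\adim(X, d)$ and suppose first $A<\infty$. By Lemma \ref{lem:adimcut} (or directly Lemma \ref{lem:adimtheta}), for every $\lambda>A$ there exist $C\in(0,\infty)$ and $\epsilon_{0}\in(0,1)$ with $\Theta_{X, d}(\epsilon)\le C\epsilon^{-\lambda}$ for all $\epsilon\in(0,\epsilon_{0}]$. Taking logarithms and dividing by $-\log\epsilon>0$ gives
\[
\eta_{X, d}(\epsilon)\le \lambda+\frac{\log C}{-\log\epsilon}
\qquad(\epsilon\in(0,\epsilon_{0}]),
\]
and since $\log C/(-\log\epsilon)\to 0$ as $\epsilon\to 0$, we get $\limsup_{\epsilon\to 0}\eta_{X, d}(\epsilon)\le\lambda$. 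Letting $\lambda\downarrow A$ yields $L\le A$. If $A=\infty$, then $(X, d)$ is not doubling, and Corollary \ref{cor:infiniteadimcut}\eqref{item:adiminf3} furnishes $\epsilon_{n}\to 0$ with $\Theta_{X, d}(\epsilon_{n})>\epsilon_{n}^{-n}$, hence $\eta_{X, d}(\epsilon_{n})>n$, so $L=\infty=A$. Combining the two inequalities finishes the proof.

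The argument is essentially bookkeeping with logarithms once Lemmas \ref{lem:adimtheta} and \ref{lem:adimcut} and Corollary \ref{cor:infiniteadimcut} are in hand; I do not anticipate a genuine obstacle, but the one point requiring care is the non-doubling/infinite-valued case — one must make sure that $\Theta_{X, d}(\epsilon)=\infty$ for small $\epsilon$ is correctly matched to $\eta_{X, d}(\epsilon)=\infty$ and to $L=\infty$, rather than silently assuming everything is finite. Using Corollary \ref{cor:infiniteadimcut}, especially the quantitative item \eqref{item:adiminf3}, makes this clean.
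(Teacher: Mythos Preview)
Your proposal is correct and follows essentially the same route as the paper: handle the non-doubling case via Corollary~\ref{cor:infiniteadimcut}, and in the doubling case translate the bounds $\Theta_{X,d}(\epsilon)\le C\epsilon^{-\lambda}$ into bounds on $\eta_{X,d}$ by taking logarithms, invoking Lemma~\ref{lem:adimcut} for the direction $\adim(X,d)\le L$. The only cosmetic difference is organizational---the paper disposes of both infinite cases together at the outset, whereas you interleave them with the two inequalities---but the mathematical content is identical.
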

\begin{proof}
Due to  Corollary \ref{cor:infiniteadimcut}, 
if $\adim(X, d)=\infty$ or $\limsup_{\epsilon \to 0}
\eta_{X, d}(\epsilon)=\infty$, 
then $\adim(X, d)
=
\limsup_{\epsilon \to 0}
\eta_{X, d}(\epsilon)$. 
We next consider  the case of 
$\adim(X, d)<\infty$ and  $\limsup_{\epsilon \to 0}
\eta_{X, d}(\epsilon)<\infty$. 
By the definition of $\adim(X, d)$, 
for all $\lambda\in (\adim(X, d), \infty)$, 
we can find $C\in (0, \infty)$ such that  for all 
$\epsilon \in (0, 1)$ we have 
$\Theta_{X, d}(\epsilon)\le C\epsilon^{-\lambda}$. 
Then we obtain  $\limsup_{\epsilon\to 0}\eta_{X, d}(\epsilon)\le \lambda$, and hence 
$\limsup_{\epsilon\to 0}\eta_{X, d}(\epsilon)
\le \adim(X, d)$. 
To prove the opposite  inequality, 
take $\lambda\in [0, \infty)$ with 
$\limsup_{\epsilon\to 0}\eta_{X, d}(\epsilon)<\lambda$. 
In this case, 
there exists $\epsilon_{0}\in (0, 1)$ such that 
for all $\epsilon\in (0, \epsilon_{0}]$ we have 
$\eta_{X, d}(\epsilon)\le \lambda$. 
Thus
every  $\epsilon\in (0, \epsilon_{0}]$
satisfies 
$\Theta_{X, d}(\epsilon)\le \epsilon^{-\lambda}$.
Lemma \ref{lem:adimcut} implies that 
$\adim(X, d)\le \lambda$, and hence 
$\adim(X, d)\le \limsup_{\epsilon\to 0}\eta_{X, d}(\epsilon)$. 
\end{proof}

\subsection{Properties of dimensions}

Let $f\colon(X, d)\to (Y, e)$ be a map between metric spaces. 
Let  $L\in [1, \infty)$ and $\gamma\in (0,\infty)$. 
We say that $f$ is \emph{$(L, \gamma)$-homogeneously bi-H\"older} if  
for all $x, y\in X$ we have 
\[
L^{-1}d(x, y)^{\gamma}\le e(f(x), f(y))\le Ld(x, y)^{\gamma}.
\]
If $\gamma=1$, the map $f$
 is said to be  \emph{$L$-bi-Lipschitz}. 
\begin{rmk}
For $\gamma>1$, there exists a metric space $(X, d)$ such that the function $d^{\gamma}$ defined by 
$d^{\gamma}(x, y)=(d(x, y))^{\gamma}$ is also a metric 
(for example, an ultrametric $d$). 
Then,  (bi-)H\"{o}lder maps with exponent $\gamma>1$ between general metric spaces  are  
 meaningful. 
\end{rmk}

We refer the readers to 
\cite{falconer1997techniques}, 
\cite{falconer2004fractal}, and \cite{fraser2020assouad} for the details of the following. 
\begin{prop}\label{prop:propdim}
Let $(X, d)$ be a metric space. 
Let $\mathcal{D}$ stand for any one of 
$\hdim$, $\ubdim$,  $\pdim$, or $\adim$. 
Then the following  hold true: 
\begin{enumerate}
\item 
For every subset $A$ of $X$, 
we have $\mathcal{D}(A, d)\le \mathcal{D}(X, d)$. \label{item:monotone}
\item Let $A, B$ be subsets of $X$ with $A\cup B=X$. 
Then we have $\mathcal{D}(X, d)=\max\{\mathcal{D}(A, d), \mathcal{D}(B, d)\}$.\label{item:finitestab} 
\item Let $(Y, e)$ be a metric space, 
and $f\colon (X, d)\to (Y, e)$ be an $(L, \gamma)$-homogeneously bi-H\"{o}lder map. 
Then,  we obtain the equality  $\mathcal{D}(f(X), e)=
(1/\gamma)\mathcal{D}(X, d)$. \label{item:biholder}
\item If $\mathcal{D}$ is either  $\hdim$ or $\pdim$, 
then every countable covering $\{S_{i}\}_{\in \zz_{\ge 1}}$ of $X$ satisfies 
$\mathcal{D}(X, d)=\sup_{i\in \zz_{\ge 1}}\mathcal{D}(S_{i}, d)$. \label{item:countablestab}
\end{enumerate}
\end{prop}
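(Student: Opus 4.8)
The plan is to handle the four items in the order $(1)$, $(4)$, $(2)$, $(3)$, which isolates the one genuinely non-trivial computation. I would first dispose of item $(1)$. For $\mathcal D\in\{\hdim,\pdim\}$ it is immediate from the definitions: a cover of a subset $A$ by members of $\mathbf F_\delta(X,d)$, or a packing with centers in $A$, is a fortiori such an object for $X$, so $\mathcal H^s(A)\le\mathcal H^s(X)$ and, unwinding the three-line definition of $\mathcal P^s$, $\mathcal P^s(A)\le\mathcal P^s(X)$ for every $s\ge 0$; hence the dimension inequality. For $\mathcal D\in\{\ubdim,\adim\}$ the only point is that a minimal $r$-net of $X$ need not lie in $A$, which I would fix with the standard device: from a minimal $r$-net $F$ of $X$ (or of a ball $B_X(a,R)$ with $a\in A$), keep those $b\in F$ with $B(b,r)\cap A\neq\emptyset$ and replace each by a point of $A\cap B(b,r)$, producing a $2r$-net of $A$ (resp.\ of $B_A(a,R)=B_X(a,R)\cap A$) of cardinality $\le\mnum_d(X,r)$ (resp.\ $\le\mnum_d(B_X(a,R),r)$). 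Since replacing $r$ by $2r$ changes neither $\limsup_{r\to 0}\log(\cdot)/(-\log r)$ nor the exponent in the Assouad estimate, item $(1)$ follows for these two as well.

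Next I would prove item $(4)$ for $\hdim$ and $\pdim$ and read off item $(2)$ for them. Both $\mathcal H^s$ and $\mathcal P^s$ are countably subadditive (they are outer measures built from infima over countable covers), so $\mathcal H^s(X)\le\sum_i\mathcal H^s(S_i)$ and likewise for $\mathcal P^s$; if $s>\sup_i\mathcal D(S_i,d)$, every summand is $0$, which forces $\mathcal D(X,d)\le s$, and together with item $(1)$ this is item $(4)$. Item $(2)$ for $\hdim$ and $\pdim$ is the two-set case. Item $(2)$ for $\ubdim$ and $\adim$ must instead be argued directly, because item $(4)$ genuinely fails for $\ubdim$ (a countable dense subset of $[0,1]$ is the usual counterexample, and this is exactly why item $(4)$ excludes $\ubdim$): if $A\cup B=X$, then the union of an $r$-net of $A$ and an $r$-net of $B$ is an $r$-net of $X$, so $\mnum_d(X,r)\le\mnum_d(A,r)+\mnum_d(B,r)\le 2\max\{\mnum_d(A,r),\mnum_d(B,r)\}$; the same holds with a ball $B_X(x,R)$ in place of $X$ once its intersections with $A$ and with $B$ are re-centered inside $A$, resp.\ $B$, at the cost of doubling $R$. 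The factor $2$ disappears in the limit, and with item $(1)$ we obtain item $(2)$.

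For item $(3)$ I would first reduce to the one-sided bound $\mathcal D(f(X),e)\le(1/\gamma)\,\mathcal D(X,d)$. An $(L,\gamma)$-homogeneously bi-H\"older map is injective, and a one-line computation from the defining inequalities shows that $f^{-1}\colon f(X)\to X$ is $(L^{1/\gamma},1/\gamma)$-homogeneously bi-H\"older; applying the one-sided bound to $f^{-1}$ then yields the reverse inequality. The one-sided bound for $\hdim$ follows by pushing a cover $\{A_i\}$ of $X$ with $\delta_d(A_i)<\delta$ forward to the cover $\{f(A_i)\}$ of $f(X)$ with $\delta_e(f(A_i))\le L\,\delta_d(A_i)^{\gamma}$, which gives $\mathcal H^{s/\gamma}_{L\delta^{\gamma}}(f(X))\le L^{s/\gamma}\mathcal H^{s}_{\delta}(X)$ and hence $\mathcal H^{s/\gamma}(f(X))\le L^{s/\gamma}\mathcal H^{s}(X)$; for $\ubdim$ it follows by pushing an $r$-net of $X$ forward to an $Lr^{\gamma}$-net of $f(X)$ and evaluating the relevant $\limsup$; and for $\adim$ it follows from the inclusion $B_e(f(x),R)\cap f(X)\subseteq f\bigl(B_d(x,(LR)^{1/\gamma})\bigr)$ combined with the net-pushforward, which converts the Assouad estimate for $(X,d)$ into one for $(f(X),e)$ with the exponent divided by $\gamma$.

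The step I expect to be the main obstacle is item $(3)$ for $\pdim$, because a packing does not push forward verbatim through $f$: the map distorts radii by the power $\gamma$, and disjointness of closed balls is not preserved on the nose. Here I would work dually, pulling a packing of a subset $f(S_i)$ of $f(X)$ back to a packing of $S_i\subseteq X$ using the upper H\"older bound together with the elementary inequality $(a+b)^{t}\ge c_{t}(a^{t}+b^{t})$, where $c_{t}=\min\{1,2^{t-1}\}$, applied with $t=1/\gamma$: from the disjointness condition $e(f(x_j),f(x_k))>\rho_j+\rho_k$ one gets $d(x_j,x_k)>L^{-1/\gamma}(\rho_j+\rho_k)^{1/\gamma}\ge L^{-1/\gamma}c_{1/\gamma}(\rho_j^{1/\gamma}+\rho_k^{1/\gamma})$, so the balls $B\bigl(x_j,\tfrac12 L^{-1/\gamma}c_{1/\gamma}\rho_j^{1/\gamma}\bigr)$ are pairwise disjoint and form a packing of $S_i$ whose $s$-sum is a fixed constant times the $(s/\gamma)$-sum of the $\rho_j$; tracking this constant through $\widetilde{\mathcal P}^{s}_{\delta}$, $\widetilde{\mathcal P}^{s}_{0}$ and $\mathcal P^{s}$ yields $\mathcal P^{s/\gamma}(f(X))\lesssim\mathcal P^{s}(X)$, which is enough to conclude $\pdim f(X)\le(1/\gamma)\pdim X$. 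All of this is classical; for the bookkeeping of the constants I would simply refer to \cite{falconer1997techniques}, \cite{falconer2004fractal}, and \cite{fraser2020assouad}.
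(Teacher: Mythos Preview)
The paper does not actually prove this proposition: it is immediately preceded by the sentence ``We refer the readers to \cite{falconer1997techniques}, \cite{falconer2004fractal}, and \cite{fraser2020assouad} for the details of the following,'' and no argument whatsoever is given. Your sketch therefore already goes well beyond what the paper does, and your organization and the individual arguments for items (1), (2), (4), and for item (3) in the cases $\hdim$, $\ubdim$, $\adim$ are the standard ones and are correct.

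There is, however, a genuine wrinkle in your treatment of item (3) for $\pdim$. You write that pulling back a packing of $f(S_i)$ proceeds ``from the disjointness condition $e(f(x_j),f(x_k))>\rho_j+\rho_k$,'' but under the paper's radius-type definition a packing is specified by \emph{disjointness of closed balls}, and in a general metric space $B(y_j,\rho_j)\cap B(y_k,\rho_k)=\emptyset$ does \emph{not} imply $e(y_j,y_k)>\rho_j+\rho_k$ (take a two-point space with distance $1$ and $\rho_j=\rho_k=0.6$). So the step as written does not go through against the paper's definition. The fix is in fact simpler than your detour through $(a+b)^{t}\ge c_{t}(a^{t}+b^{t})$: since $f$ is a bijection onto its image, set $r_j=(\rho_j/L)^{1/\gamma}$ and note that $d(z,x_j)\le r_j$ forces $e(f(z),f(x_j))\le L r_j^{\gamma}=\rho_j$; hence $f\bigl(B_d(x_j,r_j)\bigr)\subseteq B_e(f(x_j),\rho_j)\cap f(X)$, and disjointness of the image balls in $f(X)$ immediately gives disjointness of the balls $B_d(x_j,r_j)$ in $X$. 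Then $\sum\rho_j^{s/\gamma}=L^{s/\gamma}\sum r_j^{s}$ yields $\widetilde{\mathcal P}_0^{s/\gamma}(f(S_i))\le L^{s/\gamma}\,\widetilde{\mathcal P}_0^{s}(S_i)$, and pushing covers $\{S_i\}$ of $X$ forward to covers $\{f(S_i)\}$ of $f(X)$ gives $\mathcal P^{s/\gamma}(f(X))\le L^{s/\gamma}\mathcal P^{s}(X)$, which is the one-sided bound you wanted.
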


\begin{thm}\label{thm:diminequalities}
Let $(X, d)$ be a  bounded metric space. 
Then, 
\[
\tdim X \le \hdim(X, d)\le \pdim(X, d)\le \ubdim(X, d)\le \adim(X, d). 
\]

\end{thm}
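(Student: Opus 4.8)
The statement is the standard ``dimension chain,'' and the plan is to prove it as four separate inequalities, carried out in the order they appear. The first, $\tdim X\le\hdim(X,d)$, is Szpilrajn's classical theorem: if $\mathcal{H}^{s}(X)=0$ for some real $s>0$, then $\tdim X\le\lceil s\rceil-1$, and letting $s$ decrease to $\hdim(X,d)$ (using that $\hdim(X,d)<s$ forces $\mathcal{H}^{s}(X)=0$) gives the inequality in every case, including $\hdim(X,d)=\infty$. I would quote this from \cite{HW1948} or \cite{Nagata1983} rather than reprove it; its one nontrivial ingredient is that partitions between $n$ disjoint closed pairs can be taken as level sets of a Lipschitz map $X\to\rr^{n}$, which carries the $\mathcal{H}^{n}$-null space $X$ onto a Lebesgue-null subset of $\rr^{n}$, so a level value avoiding the image produces partitions with empty common intersection.

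For $\hdim(X,d)\le\pdim(X,d)$, I would fix $s>\pdim(X,d)$; then $\mathcal{P}^{s}(X)=0$, so for any $\varepsilon>0$ there is a countable cover $X=\bigcup_{i}S_{i}$ with $\sum_{i}\widetilde{\mathcal{P}}^{s}_{0}(S_{i})<\varepsilon$. For each $i$ and each small $\delta$, choose a maximal $\delta$-separated subset of $S_{i}$: shrinking its points to balls of radius a fixed fraction of $\delta$ produces a packing admissible for $\widetilde{\mathcal{P}}^{s}_{\delta}(S_{i})$, which bounds the number of points, while enlarging them to balls of radius $\delta$ yields a cover of $S_{i}$ by maximality. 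Comparing the two estimates and letting $\delta\to0$ gives $\mathcal{H}^{s}(S_{i})\le C\widetilde{\mathcal{P}}^{s}_{0}(S_{i})$ for an absolute constant $C$; hence $\mathcal{H}^{s}(X)\le C\varepsilon$ by subadditivity, so $\mathcal{H}^{s}(X)=0$ and $\hdim(X,d)\le s$. For $\pdim(X,d)\le\ubdim(X,d)$, I may assume $\ubdim(X,d)<t<\infty$, so $\mnum_{d}(X,r)\le r^{-t}$ for all small $r$. Given any admissible packing of $X$ by balls of radii in $(0,\delta)$, group the balls by dyadic radius scale $r_{i}\in[2^{-k-1}\delta,2^{-k}\delta)$; within a group the centres are more than $2^{-k-1}\delta$-separated, so the group contains at most $\mnum_{d}(X,2^{-k-2}\delta)\le(2^{-k-2}\delta)^{-t}$ balls, each contributing at most $(2^{-k}\delta)^{s}$ to $\sum_{i}r_{i}^{s}$. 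Summing the geometric series over $k\ge0$ gives $\widetilde{\mathcal{P}}^{s}_{\delta}(X)\le C\delta^{\,s-t}\to0$ whenever $s>t$, so $\widetilde{\mathcal{P}}^{s}_{0}(X)=0$, $\mathcal{P}^{s}(X)=0$, and $\pdim(X,d)\le s$ for every $s>\ubdim(X,d)$.

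For $\ubdim(X,d)\le\adim(X,d)$ the boundedness hypothesis enters: pick any $x_{0}\in X$ and take $R_{0}\ge\sup_{y\in X}d(x_{0},y)$ (finite, since $X$ is bounded), so that $X=B(x_{0},R_{0})$. Assuming $\adim(X,d)<\lambda<\infty$, the defining estimate for the Assouad dimension applied to this ball gives $\mnum_{d}(X,r)=\mnum_{d}(B(x_{0},R_{0}),r)\le C'(R_{0}/r)^{\lambda}$ for $r<R_{0}$, whence $\ubdim(X,d)=\limsup_{r\to0}\log\mnum_{d}(X,r)/(-\log r)\le\lambda$; equivalently, $\mnum_{d}(X,\epsilon R_{0})\le\Theta_{X,d}(\epsilon)$ and one applies Proposition \ref{prop:adimform}. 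Since $\lambda>\adim(X,d)$ is arbitrary, this completes the chain. I expect the only genuinely deep input to be Szpilrajn's inequality in the first step, which I would cite; the remaining three are routine covering--packing comparisons, the main (and modest) obstacle being the dyadic bookkeeping for $\pdim\le\ubdim$.
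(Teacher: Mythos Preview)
Your proposal is correct and follows the same route as the paper: the paper's own proof consists entirely of citations (Szpilrajn for $\tdim\le\hdim$, Falconer's book for the two middle inequalities, Fraser's book or the present Proposition~\ref{prop:adimform} for $\ubdim\le\adim$), and what you have written is exactly the standard covering--packing argument contained in those references, together with the same use of boundedness for the last step. There is no substantive difference in strategy; you have simply unpacked what the paper leaves to the literature.
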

\begin{proof}
The inequality $\tdim X \le \hdim(X, d)$ is due to 
Szpilrajn \cite{Szpilrajn1937} (see also \cite{H2001}). 
The inequality $\ubdim(X, d)\le \adim(X, d)$ is proven in 
\cite[Lemma 2.4.3]{fraser2020assouad}, or we can prove it using Proposition \ref{prop:adimform}. 
The proofs of 
the remaining inequalities are  presented in 
\cite[Chapters 2 and  3]{falconer2004fractal}, 
which proofs are  valid for not only  subsets of the Euclidean spaces but also general metric spaces. 
\end{proof}

For the details of the next proposition, we refer the readers to 
 \cite{HW1948}, \cite{P1975}, 
\cite{Nagata1983}, and  \cite{Cdimension}. 
\begin{prop}\label{prop:tdimprod}
Let $X$ and $Y$ be separable metrizable spaces. 
Then we have 
$\tdim(X\sqcup Y)=\max\{\tdim(X), \tdim(Y)\}$ and 
$\tdim(X\times Y)\le \tdim X+\tdim Y$. 
\end{prop}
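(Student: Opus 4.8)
The plan is to reduce both statements to standard facts about the covering dimension of separable metrizable spaces, which may be found in the references \cite{HW1948}, \cite{P1975}, \cite{Nagata1983}, \cite{Cdimension} already cited. For the sum formula, I would first recall the \emph{countable closed sum theorem} for the covering dimension: if a normal space $Z$ is the union of countably many closed subspaces $Z_{1}, Z_{2}, \dots$, then $\tdim Z = \sup_{i} \tdim Z_{i}$. Apply this with $Z = X \sqcup Y$: the summands $X$ and $Y$ are both closed (and open) in the direct sum, so $\tdim(X\sqcup Y) = \max\{\tdim X, \tdim Y\}$. The inequality $\tdim(X\sqcup Y) \ge \max\{\tdim X, \tdim Y\}$ also follows from monotonicity of $\tdim$ under closed (indeed, arbitrary) subspaces, so in fact only the subadditivity direction of the sum theorem is needed. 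One should note that $X \sqcup Y$ is again separable metrizable, so the hypotheses of the sum theorem are met.

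For the product inequality, I would invoke the \emph{logarithmic inequality for the covering dimension of products}: for separable metrizable spaces $X$ and $Y$ (more generally, for spaces at least one of which is non-empty and both of which are, say, metrizable), one has $\tdim(X\times Y) \le \tdim X + \tdim Y$. This is a classical theorem (for separable metrizable spaces it is proven in \cite{HW1948}; see also \cite{P1975}, \cite{Nagata1983}). If one wanted a self-contained argument one could proceed by double induction on $\tdim X + \tdim Y$, using the characterization of $\tdim \le n$ via closed maps onto polyhedra or via partitions, but since the result is standard and the excerpt explicitly permits citing these references, I would simply cite it. The two degenerate cases (one factor empty, or one factor of dimension $\infty$) are immediate from the conventions $\tdim \emptyset = -1$ and the fact that $n + \infty = \infty$.

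The main obstacle, such as it is, is not mathematical depth but bookkeeping about hypotheses: one must make sure that $X \sqcup Y$ and $X \times Y$ remain separable metrizable so that the cited theorems apply, and one must handle the possibility that $\tdim X$ or $\tdim Y$ equals $\infty$, in which case both asserted (in)equalities hold trivially by the conventions on $\infty$. No genuinely hard step arises; the proposition is a packaging of well-known dimension-theoretic facts for use later in the paper.

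So the proof reads, in outline:
\begin{enumerate}
\item Note $X\sqcup Y$ is separable metrizable; apply the countable closed sum theorem to the clopen decomposition $X\sqcup Y = X \cup Y$ to get $\tdim(X\sqcup Y)\le\max\{\tdim X,\tdim Y\}$, and monotonicity under closed subspaces for the reverse inequality.
\item Note $X\times Y$ is separable metrizable; invoke the classical product inequality $\tdim(X\times Y)\le\tdim X+\tdim Y$ from the cited references, treating the cases where a factor is empty or infinite-dimensional separately via the conventions on $-1$ and $\infty$.
\end{enumerate}
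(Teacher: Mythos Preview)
Your proposal is correct and matches the paper's treatment: the paper does not give a proof at all but simply refers the reader to \cite{HW1948}, \cite{P1975}, \cite{Nagata1983}, and \cite{Cdimension}, and your outline does exactly that, naming the countable closed sum theorem and the classical product inequality as the relevant facts.
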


The proof of the following is presented  in \cite{falconer2004fractal} and 
\cite{fraser2020assouad}. 
\begin{prop}\label{prop:productdim}
Let $(X, d)$ and $(Y, e)$ be metric spaces. 
Then the following statements hold true. 
\begin{enumerate}
\item 
Let $\mathcal{D}$ be any one of $\pdim$, 
$\ubdim$, or $\adim$, then we have 
\[
\mathcal{D}(X\times Y, d\times_{\infty}e)\le 
\mathcal{D}(X, d)+\mathcal{D}(Y, e).
\] 
\item 
$\hdim(X\times Y, d\times_{\infty}e)\le \hdim(X, d)+\pdim(Y, e)$. \label{item:haus2}
\end{enumerate}
\end{prop}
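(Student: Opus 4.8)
The plan rests on a single structural fact about the $\ell^{\infty}$-product: closed balls split, $B_{d\times_{\infty}e}((x,y),r)=B_{d}(x,r)\times B_{e}(y,r)$, and diameters of product sets satisfy $\delta_{d\times_{\infty}e}(A\times B)=\delta_{d}(A)\lor\delta_{e}(B)$. From the ball splitting, the product of an $r$-net of $(X,d)$ with an $r$-net of $(Y,e)$ is an $r$-net of $X\times Y$, so $\mnum_{d\times_{\infty}e}(X\times Y,r)\le\mnum_{d}(X,r)\cdot\mnum_{e}(Y,r)$; applied inside a ball and then covering each factor at radius $\epsilon R$, it also gives $\Theta_{X\times Y,\,d\times_{\infty}e}(\epsilon)\le\Theta_{X,d}(\epsilon)\cdot\Theta_{Y,e}(\epsilon)$ for every $\epsilon\in(0,1)$. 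For the $\ubdim$ part of (1) I would take logarithms in the covering-number inequality, divide by $-\log r$ for $r<1$, and pass to $\limsup_{r\to0}$ using subadditivity of $\limsup$; for the $\adim$ part, the $\Theta$-inequality yields $\eta_{X\times Y,\,d\times_{\infty}e}(\epsilon)\le\eta_{X,d}(\epsilon)+\eta_{Y,e}(\epsilon)$, and Proposition \ref{prop:adimform} together with subadditivity of $\limsup$ finishes it.

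For the $\pdim$ part of (1) I would first replace $X$ and $Y$ by countable unions of bounded subsets and use countable stability of $\pdim$ (Proposition \ref{prop:propdim}\,(4)) to reduce to bounded spaces, and then invoke the box-counting description $\pdim(E,d)=\inf\{\sup_{i}\ubdim(E_{i},d)\mid E=\bigcup_{i\in\zz_{\ge1}}E_{i}\}$ valid for bounded $E$. Here the inequality $\le$ is Proposition \ref{prop:propdim}\,(4) combined with Theorem \ref{thm:diminequalities}, and $\ge$ holds because $\mathcal{P}^{s}(E)=0$ produces a decomposition $E=\bigcup_{i}S_{i}$ with $\widetilde{\mathcal{P}}_{0}^{s}(S_{i})<\infty$ for every $i$, while $\widetilde{\mathcal{P}}_{0}^{s}(S)<\infty$ forces $\ubdim(S,d)\le s$ (a maximal $r$-separated subset of $S$ of cardinality $\asymp\mnum_{d}(S,r)$ yields a disjoint ball packing, so faster-than-$r^{-s}$ growth of $\mnum_{d}(S,r)$ would make $\widetilde{\mathcal{P}}_{0}^{s}(S)$ infinite). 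Given $\epsilon>0$, choose such decompositions $X=\bigcup_{i}A_{i}$ and $Y=\bigcup_{j}B_{j}$ with $\ubdim(A_{i},d)\le\pdim(X,d)+\epsilon$ and $\ubdim(B_{j},e)\le\pdim(Y,e)+\epsilon$; since $X\times Y=\bigcup_{i,j}A_{i}\times B_{j}$, the $\ubdim$-bound already proved gives $\ubdim(A_{i}\times B_{j},d\times_{\infty}e)\le\pdim(X,d)+\pdim(Y,e)+2\epsilon$, and Proposition \ref{prop:propdim}\,(4) then yields $\pdim(X\times Y,d\times_{\infty}e)\le\pdim(X,d)+\pdim(Y,e)+2\epsilon$; letting $\epsilon\to0$ completes (1).

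For (2) the heart of the matter is an inequality
\[
\mathcal{H}^{s+t}(A\times B,\,d\times_{\infty}e)\le c(s,t)\,\mathcal{H}^{s}(A,d)\,\widetilde{\mathcal{P}}_{0}^{t}(B,e)
\]
for all $A\subset X$ and $B\subset Y$. Granting it, I would fix $s>\hdim(X,d)$ and $t>\pdim(Y,e)$; then $\mathcal{H}^{s}(X,d)=0$, and $\mathcal{P}^{t}(Y,e)=0$ gives $Y=\bigcup_{i}S_{i}$ with $\widetilde{\mathcal{P}}_{0}^{t}(S_{i})<\infty$ for each $i$, so $\mathcal{H}^{s+t}(X\times S_{i},d\times_{\infty}e)=0$ and hence $\mathcal{H}^{s+t}(X\times Y,d\times_{\infty}e)=0$, i.e.\ $\hdim(X\times Y,d\times_{\infty}e)\le s+t$; taking the infimum over $s>\hdim(X,d)$ and $t>\pdim(Y,e)$ gives $\hdim(X\times Y,d\times_{\infty}e)\le\hdim(X,d)+\pdim(Y,e)$. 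To prove the displayed inequality, take a cover $\{A_{k}\}_{k}$ of $A$ by sets of diameter $r_{k}<\delta$ with $\sum_{k}r_{k}^{s}$ near $\mathcal{H}^{s}_{\delta}(A)$; for each $k$ choose a maximal subset $\{y_{k,\ell}\}_{\ell}$ of $B$ whose points are pairwise more than $r_{k}$ apart, so that $\{A_{k}\times B_{e}(y_{k,\ell},r_{k})\}_{k,\ell}$ covers $A\times B$ by sets of diameter $\le2r_{k}$ while the pairwise-disjoint balls $B_{e}(y_{k,\ell},r_{k}/2)$ force $\#\{\ell\}\le(r_{k}/2)^{-t}\widetilde{\mathcal{P}}_{\delta}^{t}(B)$; summing $\sum_{k,\ell}(2r_{k})^{s+t}$, taking the infimum over $\{A_{k}\}$, and letting $\delta\to0$ delivers the claim.

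I expect this last estimate to be the main obstacle, as it is the only step that is not purely formal: one has to correctly match the scale of an efficient \emph{diameter}-cover of the first factor with a \emph{radius}-packing of the second, and this is exactly the mechanism by which $\pdim$ of the second factor---rather than $\hdim$---enters in (2). For the $\ell^{\infty}$ metric the bookkeeping is lighter than in the Euclidean setting, since balls are literally products, but the idea is the same.
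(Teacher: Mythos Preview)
Your argument is correct and follows the standard route. The paper itself does not supply a proof of this proposition: it simply cites \cite{falconer2004fractal} and \cite{fraser2020assouad}. What you have written is essentially the argument one finds in those references---the $\ubdim$ and $\adim$ bounds via the trivial ball-splitting $B_{d\times_{\infty}e}((x,y),r)=B_{d}(x,r)\times B_{e}(y,r)$, the $\pdim$ bound via Tricot's characterisation $\pdim(E)=\inf\bigl\{\sup_{i}\ubdim(E_{i})\bigr\}$ together with the $\ubdim$ product bound, and the Hausdorff--packing product inequality via covering one factor efficiently and packing the other at the matching scale (Howroyd's lemma, as in Falconer). Your commentary on why $\pdim$ rather than $\hdim$ appears on the right in (2) is exactly the point of the remark following the proposition in the paper. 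There is nothing to correct; your write-up could serve as the missing proof.
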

\begin{rmk}
In general, we can not replace $\pdim$ with $\hdim$
in the right-hand side in   (\ref{item:haus2}) of Proposition \ref{prop:productdim}. 
Indeed, 
for all metric spaces $(X, d)$ and $(Y, e)$, 
we have 
$\hdim(X, d)+\hdim(Y, e)\le \hdim(X\times Y, d\times_{\infty}e)$.
For the details, we refer the readers to  \cite{falconer2004fractal}
and \cite{fraser2020assouad}. 
\end{rmk}

\subsection{The Gromov--Hausdorff distance}
For a metric space 
$(Z, h)$,  
and  for subsets 
$A$,  $B$ 
of 
$Z$, 
we denote by 
$\hdis(A, B; Z, h)$ 
the \emph{Hausdorff distance} of 
$A$ 
and 
$B$ 
in 
$Z$. 
For  metric spaces 
$(X, d)$ 
and 
$(Y, e)$, 
the 
\emph{Gromov--Hausdorff distance} 
$\grdis((X, d),(Y, e))$ between 
$(X, d)$ 
and 
$(Y, e)$ 
is defined as 
the infimum of  all values  
$\hdis(i(X), j(Y); Z, h)$, 
where 
$(Z, h)$ 
is a metric space, 
 and 
$i\colon X\to Z$ 
and 
$j\colon Y\to Z$ 
are isometric embeddings. 
Let $f\colon X\to Y$ be a map between metric spaces
 $(X, d)$ and $(Y, e)$. 
We define the \emph{distortion $\dis(f)$  of $f$} by 
\[
\dis(f)=\sup_{x, y\in X}|d(x, y)-e(f(x), f(y))|. 
\]
The following is deduced from  
\cite[Corollary 7.3.28]{BBI}. 
\begin{lem}\label{lem:disfunc}
Let $(X, d)$ and $(Y, e)$ be  compact metric spaces, 
and  $f\colon X\to Y$ be  a surjective  map. 
Then
$\grdis((X, d), (Y, e))\le 2\dis(f)$. 
\end{lem}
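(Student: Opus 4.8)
The plan is to produce a single metric $h$ on the disjoint union $Z=X\sqcup Y$ that restricts to $d$ on $X$ and to $e$ on $Y$ and in which the two pieces are close in the Hausdorff sense; this bounds $\grdis((X,d),(Y,e))$ directly from the definition. First I would dispose of the degenerate case $\dis(f)=0$: then $f$ is a surjective isometric embedding, hence an isometry, and both sides of the asserted inequality vanish. So from now on assume $\epsilon:=\dis(f)>0$.

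Next I would set $h|_{X\times X}=d$, $h|_{Y\times Y}=e$, and for $x\in X$, $y\in Y$,
\[
h(x,y)=h(y,x)=\frac{\epsilon}{2}+\inf_{x'\in X}\bigl(d(x,x')+e(f(x'),y)\bigr).
\]
Symmetry is built in, and the additive constant forces $h(x,y)\ge\epsilon/2>0$, so positivity holds. The one point needing care is the triangle inequality for $h$, which I would check by splitting three given points according to how many lie in $X$. The all-in-$X$ and all-in-$Y$ cases are immediate; among the mixed cases, the sub-cases $h(x_1,y)\le h(x_1,x_2)+h(x_2,y)$ and $h(x,y_1)\le h(x,y_2)+h(y_2,y_1)$ follow at once from the triangle inequalities of $d$ and of $e$ applied inside the infimum, while the two remaining sub-cases $h(x_1,x_2)\le h(x_1,y)+h(y,x_2)$ and $h(y_1,y_2)\le h(y_1,x)+h(x,y_2)$ reduce, after inserting the triangle inequalities of $d$ and $e$ and letting the two copies of $\epsilon/2$ absorb a defect $\epsilon$, to the single estimate
\[
d(x',x'')-\epsilon\le e(f(x'),f(x''))\qquad(x',x''\in X),
\]
which is exactly the lower half of $|d(x',x'')-e(f(x'),f(x''))|\le\dis(f)=\epsilon$. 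I expect this bookkeeping — verifying that the defect introduced by the distortion is precisely cancelled by the two offsets $\epsilon/2$ — to be the only substantive step, and the step I would be most careful about.

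Finally, since $h$ extends $d$ and $e$, the inclusions $X\hookrightarrow Z$ and $Y\hookrightarrow Z$ are isometric embeddings. Taking $x'=x$ in the infimum gives $h(x,f(x))\le\epsilon/2$ for every $x\in X$; and for $y\in Y$, surjectivity of $f$ produces $x$ with $f(x)=y$, so again $h(x,y)\le\epsilon/2$. Hence $\hdis(X,Y;Z,h)\le\epsilon/2$, and therefore $\grdis((X,d),(Y,e))\le\epsilon/2\le 2\dis(f)$. (This argument in fact yields the sharper bound $\grdis((X,d),(Y,e))\le\dis(f)/2$ and uses neither compactness; alternatively, one may quote the correspondence description $\grdis=\tfrac12\inf_R\dis(R)$ and apply it to the graph of $f$, which is a correspondence of distortion $\dis(f)$, to reach the same conclusion.)
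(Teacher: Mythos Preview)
Your argument is correct and in fact yields the sharper bound $\grdis((X,d),(Y,e))\le\tfrac12\dis(f)$. The paper gives no proof of its own but simply cites Burago--Burago--Ivanov, where the result is obtained via the correspondence characterization $\grdis=\tfrac12\inf_R\dis(R)$ applied to the graph of $f$---precisely the alternative you mention at the end; your explicit metric on $X\sqcup Y$ is the standard construction underlying that characterization, specialized to this particular correspondence.
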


For a set $X$, 
a map $d\colon  X\times X\to [0, \infty)$ is said to be 
a \emph{pseudo-metric} if 
$d$ satisfies the triangle inequality and satisfies 
that $d(x, x)=0$  and $d(x, y)=d(y, x)$ for all $x, y\in X$.
If 
a pseudo-metric $d$ satisfies that  $d(x, y)=0$ implies $x=y$, then $d$ is a metric. 
A pseudo-metric is said to be a 
\emph{pseudo-ultrametric} if it satisfies the strong triangle inequality. 
We denote by $\pmet(X)$ (resp.~$\pumet(X)$) 
the set of 
all pseudo-metrics (resp.~pseudo-ultrametrics) 
on $X$. 
We define a metric  $\mathcal{D}_{X}$ on 
 $\pmet(X)$ by 
 $\mathcal{D}_{X}(d, e)=
 \sup_{x, y\in X}|d(x, y)-e(x, y)|$. 
 Note that $\mathcal{D}_{X}$ 
can  take the value $\infty$. 

Let $d\in \pmet(X)$. 
We denote by $X_{/d}$ the quotient set 
by the relation $\sim_{d}$ defined by 
$x \sim_{d} y\iff d(x, y)=0$.  
We also denote by $[x]_{d}$ the equivalence class of $x$ by 
$\sim_{d}$. We define a metric $[d]$ on $X_{/d}$ by 
$[d]([x]_{d}, [y]_{d})=d(x, y)$. 
The metric $[d]$ is well-defined. 
Remark that if $d$ is a metric, then 
$(X_{/d}, [d])$ is isometric to $(X, d)$. 
\begin{prop}\label{prop:ghquotient}
Let  $d, e\in \pmet(X)$, and 
assume that $e$ is a metric on $X$. 
Then 
we have 
$\grdis\left(\left(X_{/d}, [d]\right), \left(X, e\right)\right)\le 2\cdot  \mathcal{D}_{X}(d, e)$. 
\end{prop}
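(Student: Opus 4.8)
The plan is to prove the bound by exhibiting a single explicit metric on the disjoint union $X_{/d}\sqcup X$ (equipped with its two canonical disjoint copies) which restricts to $[d]$ on $X_{/d}$ and to $e$ on $X$, and in which these two copies are at Hausdorff distance at most $\mathcal{D}_{X}(d,e)$; the estimate for $\grdis$ then follows immediately from the definition of the Gromov--Hausdorff distance. Write $\rho=\mathcal{D}_{X}(d,e)$. If $\rho=\infty$ there is nothing to prove, and if $\rho=0$ then $d=e$ is a metric on $X$, so $(X_{/d},[d])$ is isometric to $(X,e)$ and $\grdis=0$; hence one may assume $\rho\in(0,\infty)$. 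I would then define $h$ on $X_{/d}\sqcup X$ by keeping $[d]$ and $e$ on the respective copies and, for $[x]_{d}\in X_{/d}$ and $y\in X$, by setting
\[
h([x]_{d},y)=h(y,[x]_{d})=\inf_{z\in X}\bigl(d(x,z)+\rho+e(z,y)\bigr),
\]
where $d(x,z)=[d]([x]_{d},[z]_{d})$, so that the right-hand side is independent of the chosen representative $x$; informally, $[x]_{d}$ is glued to $x$ at cost $\rho$ and $h$ is the induced shortest-path distance.

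Next I would check that $h$ is a metric on $X_{/d}\sqcup X$. Symmetry and non-negativity are clear, and because $\rho>0$ every cross-distance is at least $\rho>0$, so, together with $e$ being a metric and $[d]$ separating the points of $X_{/d}$, we get $h(p,q)=0\Rightarrow p=q$. The triangle inequalities among three points of a single copy are automatic, and those involving exactly one cross-pair — for example $h([x]_{d},y)\le[d]([x]_{d},[x']_{d})+h([x']_{d},y)$ and $h([x]_{d},y)\le h([x]_{d},y')+e(y',y)$ — reduce to the defining infimum after one application of the triangle inequality of $d$ (respectively $e$). The only inequalities with any content, and the step I expect to be the main point of the argument, are the two asserting that the gluing creates no shortcut inside either copy, namely $[d]([x]_{d},[x']_{d})\le h([x]_{d},y)+h(y,[x']_{d})$ and $e(y,y')\le h([x]_{d},y)+h([x]_{d},y')$; these are exactly where the hypothesis enters, in the form $|d(z,z')-e(z,z')|\le\rho$ for $z,z'\in X$. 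For the first, taking near-infimizers $z,z'$ in the two cross-distances one estimates
\[
h([x]_{d},y)+h(y,[x']_{d})\ \ge\ d(x,z)+d(x',z')+2\rho+e(z,z')\ \ge\ d(x,z)+d(x',z')+\rho+d(z,z')\ \ge\ d(x,x'),
\]
using the triangle inequality of $e$, then $e(z,z')\ge d(z,z')-\rho$, then the triangle inequality of $d$; the second inequality is symmetric. This verification is elementary, and it is precisely here that the choice of gluing cost $\rho$ (rather than something smaller) is used.

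Finally, taking $z=y$ in the defining formula gives $h([y]_{d},y)\le\rho$ for every $y\in X$, so each of the two copies lies in the $\rho$-neighbourhood of the other inside $(X_{/d}\sqcup X,h)$; hence $\hdis(X_{/d},X;X_{/d}\sqcup X,h)\le\rho$, and therefore $\grdis\bigl((X_{/d},[d]),(X,e)\bigr)\le\rho\le 2\mathcal{D}_{X}(d,e)$, which is the claim. I would note in passing that this construction in fact yields the sharper constant $\mathcal{D}_{X}(d,e)$, and that gluing cost $\mathcal{D}_{X}(d,e)/2$ even gives $\grdis\le\mathcal{D}_{X}(d,e)/2$; the constant $2$ in the statement is retained only for uniformity with the other amalgamation estimates. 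When $X$ is compact one could instead apply Lemma~\ref{lem:disfunc} to the quotient surjection $(X,e)\to(X_{/d},[d])$, whose distortion is exactly $\mathcal{D}_{X}(d,e)$, but the direct construction above requires no compactness hypothesis.
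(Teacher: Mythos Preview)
Your proof is correct, but it takes a different route from the paper. The paper's argument is a one-liner: the canonical projection $p\colon (X,e)\to (X_{/d},[d])$ is surjective with $\dis(p)=\mathcal{D}_{X}(d,e)$, and then Lemma~\ref{lem:disfunc} gives $\grdis\le 2\dis(p)$. You instead build an explicit admissible metric on $X_{/d}\sqcup X$ by gluing at cost $\rho$, and verify the triangle inequality directly; the key ``no-shortcut'' estimates you isolate are handled correctly. What your approach buys is that it works without any compactness hypothesis (Lemma~\ref{lem:disfunc}, as stated in the paper, assumes compact metric spaces), and it exhibits the sharper bound $\grdis\le\mathcal{D}_{X}(d,e)$ --- indeed, as you observe, gluing at cost $\rho/2$ even yields $\grdis\le\mathcal{D}_{X}(d,e)/2$. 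You already mention the paper's approach in your final paragraph, so you are aware of both; for the purposes of this paper the short distortion argument suffices since everything downstream (Corollary~\ref{cor:metconti}) is applied only to compact spaces, but your direct construction is a cleaner and more informative proof of the proposition as stated.
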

\begin{proof}
Let $p\colon X\to X_{/d}$ be the canonical projection. 
Since  $\dis(p)= \mathcal{D}_{X}(d, e)$, 
the proposition follows from Lemma \ref{lem:disfunc}. 
\end{proof}
Proposition \ref{prop:ghquotient} implies:
\begin{cor}\label{cor:metconti}
Let $T$ be a topological space all of whose
finite subsets are closed. 
Let $X$ be a set. 
If a map $h\colon T\to \pmet(X)$  is continuous and 
there exists a finite subset $L$ of $T$ such that 
$h(t)$ is a metric for all $t\in T\setminus L$, 
then the map $F\colon T\to \grsp$ defined by 
$F(t)=(X_{/h(t)}, [h(t)])$ is continuous. 
\end{cor}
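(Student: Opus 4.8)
The plan is to derive Corollary~\ref{cor:metconti} directly from Proposition~\ref{prop:ghquotient} by splitting the topological space $T$ into the ``bad'' finite set where $h(t)$ is merely a pseudo-metric and the ``good'' part where it is a genuine metric. Write $E=\{t\in T\mid h(t)\text{ is not a metric}\}$, which by hypothesis is finite, hence closed in $T$. I would fix an arbitrary $t_{0}\in T$ and show continuity of $F$ at $t_{0}$, distinguishing two cases according to whether $t_{0}\in E$ or $t_{0}\notin E$.

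\medskip

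First I would handle a point $t_{0}\notin E$. Since $E$ is finite and all finite subsets of $T$ are closed, $U=T\setminus E$ is an open neighbourhood of $t_{0}$, and for every $t\in U$ both $h(t)$ and $h(t_{0})$ are metrics on $X$. Applying Proposition~\ref{prop:ghquotient} with $d=h(t)$ and $e=h(t_{0})$ gives
\[
\grdis\bigl(F(t),F(t_{0})\bigr)
=\grdis\bigl((X_{/h(t)},[h(t)]),(X_{/h(t_{0})},[h(t_{0})])\bigr)
\le 2\,\mathcal{D}_{X}\bigl(h(t),h(t_{0})\bigr),
\]
using that $(X_{/h(t_{0})},[h(t_{0})])$ is isometric to $(X,h(t_{0}))$ when $h(t_{0})$ is a metric. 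Since $h$ is continuous into $(\pmet(X),\mathcal{D}_{X})$, the right-hand side tends to $0$ as $t\to t_{0}$, giving continuity of $F$ at $t_{0}$.

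\medskip

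For a point $t_{0}\in E$ one cannot invoke Proposition~\ref{prop:ghquotient} with $e=h(t_{0})$, since that requires $e$ to be a metric; this is the one place that needs a little care, and it is the main (mild) obstacle. The fix is to pass through the quotient on both sides: for $t$ near $t_{0}$ one has the triangle inequality for $\grdis$,
\[
\grdis\bigl(F(t),F(t_{0})\bigr)
\le \grdis\bigl(F(t),(X,h(t))\bigr)
+\grdis\bigl((X,h(t)),(X,h(t_{0}))\bigr)
+\grdis\bigl((X,h(t_{0})),F(t_{0})\bigr),
\]
but a cleaner route avoids even this: observe that for \emph{any} $d,e\in\pmet(X)$ the canonical projection $X\to X_{/d}$ has distortion exactly $\mathcal{D}_{X}(d,e)$ relative to $e$ only when $e$ is a metric, so instead I would prove the symmetric bound $\grdis(F(t),F(t_{0}))\le 2\,\mathcal{D}_{X}(h(t),h(t_{0}))$ for all pairs in $\pmet(X)$ by a direct Hausdorff-distance estimate. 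Concretely, equip the disjoint union $X_{/h(t)}\sqcup X_{/h(t_{0})}$ with the pseudo-metric that restricts to $[h(t)]$ and $[h(t_{0})]$ on the two pieces and sets the cross-distance between $[x]_{h(t)}$ and $[y]_{h(t_{0})}$ to $\inf_{z\in X}\bigl(h(t)(x,z)+ \tfrac12\mathcal{D}_{X}(h(t),h(t_{0}))+h(t_{0})(z,y)\bigr)$-style gluing along the identity of $X$; checking the (strong) triangle inequality and reading off that each side sits within Hausdorff distance $\le 2\,\mathcal{D}_{X}(h(t),h(t_{0}))$ of the other is routine and gives the desired bound with no metricity assumption. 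With that symmetric bound in hand, continuity of $F$ at every $t_{0}\in T$ — whether in $E$ or not — follows at once from continuity of $h$, completing the proof.
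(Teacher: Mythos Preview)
Your argument is correct, but you are working harder than necessary in the case $t_{0}\in E$. The paper's intended deduction from Proposition~\ref{prop:ghquotient} uses only that the proposition requires \emph{one} of the two pseudo-metrics to be a genuine metric, not both: when $t_{0}\in E$, remove the finite (hence closed) set $E\setminus\{t_{0}\}$ to obtain an open neighbourhood $U$ of $t_{0}$, and for each $t\in U\setminus\{t_{0}\}$ apply Proposition~\ref{prop:ghquotient} with the roles swapped, namely $d=h(t_{0})$ and $e=h(t)$ (now $e$ is the metric). This immediately yields $\grdis(F(t_{0}),F(t))\le 2\,\mathcal{D}_{X}(h(t_{0}),h(t))$ and continuity follows. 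You overlooked this symmetry and instead sketched a gluing on $X_{/h(t)}\sqcup X_{/h(t_{0})}$ that proves the bound for \emph{arbitrary} pairs in $\pmet(X)$; that strengthening is true and your outline is essentially right (the parenthetical ``strong'' before ``triangle inequality'' is a slip---nothing ultrametric is involved), but it amounts to reproving a generalization of Proposition~\ref{prop:ghquotient} that the corollary does not need. As a minor aside, in your first case the restriction to $U=T\setminus E$ is also unnecessary: with $e=h(t_{0})$ a metric, Proposition~\ref{prop:ghquotient} already gives the bound for every $t\in T$.
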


\subsection{Amalgamations of pseudo-metrics}
For a topological  space $X$, 
we denote by 
$\met(X)$  (resp.~$\ult(X)$)
 the set of all metrics (resp.~all ultrametrics) 
 on $X$  generating the same topology of $X$. 
 For every $n\in \zz_{\ge 1}$, 
 we denote by $\widehat{n}$
the set $\{1, \dots, n\}$. 
In what follows, we consider that 
the set $\widehat{n}$ 
is always equipped with the discrete topology. 

Since 
Proposition 3.1 in \cite{ishiki2021dense}
 treats 
a similar construction, 
we omit the proofs of the following lemmas. 
\begin{lem}\label{lem:amal2}
Let $n\in \zz_{\ge 2}$. 
Let $\{X_{i}\}_{i=1}^{n}$ be metrizable spaces and 
$\{d_{i}\}_{i=1}^{n}$ be pseudo-metrics with 
$d_{i}\in \pmet(X_{i})$. 
Let $r\in \pmet(\widehat{n})$
and 
$p_{i}\in X_{i}$. 
We define a symmetric function $D$ on 
$(\coprod_{i=1}^{n}X_{i})^{2}$ by 
\[
D(x, y)=
\begin{cases}
d_{i}(x, y) & \text{if $x, y\in X_{i}$;}\\
d_{i}(x, p_{i})+r(i, j)+d_{j}(p_{j}, y) & \text{if $x\in X_{i}$ and $y\in X_{j}$. }
\end{cases}
\]
Then
$D\in \pmet(\coprod_{i=1}^{n}X_{i})$. 
Moreover, 
if each $d_{i}$ is in $\met(X_{i})$, and if 
$r\in \met(\widehat{n})$, then 
$D\in \met(\coprod_{i=1}^{n}X_{i})$. 
\end{lem}

\begin{lem}\label{lem:ultraamal}
Let $n\in \zz_{\ge 2}$. 
Let $\{X_{i}\}_{i=1}^{n}$ be ultrametrizable spaces and 
$\{d_{i}\}_{i=1}^{n}$ be pseudo-ultrametrics with 
$d_{i}\in \pumet(X_{i})$. 
Let $r\in \pumet(\widehat{n})$ and 
$p_{i}\in X_{i}$. 
We define a symmetric function $D$ on 
the set 
$(\coprod_{i=1}^{n}X_{i})^{2}$ by 
\[
D(x, y)=
\begin{cases}
d_{i}(x, y) & \text{if $x, y\in X_{i}$;}\\
d_{i}(x, p_{i})\lor r(i, j)\lor d_{j}(p_{j}, y) & \text{if $x\in X_{i}$ and $y\in X_{j}$. }
\end{cases}
\]
Then
$D\in \pumet(\coprod_{i=1}^{n}X_{i})$. 
Moreover, if each $d_{i}$ is in 
$\ult(X_{i})$, and if 
$r\in \ult(\widehat{n})$, then 
$D\in \ult(\coprod_{i=1}^{n}X_{i})$. 
\end{lem}

\section{Prescribed dimensions}\label{sec:Cantor}

\subsection{Cantor ultrametric spaces}
\begin{df}
Let $\numset$ be the set of all sequences 
 $\mathbf{m}=\{m_{i}\}_{i\in \zz_{\ge 0}}$ of integers  with $m_{i}\ge 2$ for all $i\in \zz_{\ge 0}$. 
 In other words, 
 $\numset=(\zz_{\ge 2})^{\zz_{\ge 0}}$. 
A map  $\alpha\colon \zz_{\ge 0}\to (0, \infty)$ is 
said to be  a \emph{shrinking sequence} if $\alpha$ is 
strictly decreasing 
 and converges to $0$.
 We denote by $\shrinkset$ the set of all shrinking sequences. 
Let $S(\mathbf{m})$ be the set of all maps $x$ from 
$\zz_{\ge 0}$ into $\zz_{\ge 0}$ such that 
$x(i)\in \{0, 1, \dots, m_{i}-1\}$ for all $i\in \zz_{\ge 0}$. 
We  define a valuation $v\colon S(\mathbf{m})\times S(\mathbf{m})\to \zz_{\ge 0}\sqcup\{\infty\}$ by 
$v(x, y)=\min\{\, n\in \zz_{\ge 0}\mid x(n)\neq y(n)\, \}$ if $x\neq y$; otherwise $v(x, y)=\infty$. 
We also define 
$\alpha_{\sharp}(x, y)=\alpha(v(x, y))$, where 
we put $\alpha(\infty)=0$. 
Then $\alpha_{\sharp}$ is an ultrametric on 
$S(\mathbf{m})$. 
Notice that 
$(S(\mathbf{m}), \alpha_{\sharp})$ is a 
Cantor space. 
In this paper, 
the space  $(S(\mathbf{m}), \alpha_{\sharp})$ is called 
the \emph{$(\mathbf{m}, \alpha)$-Cantor ultrametric space}. 
This space is a generalization of sequentially metrized Cantor spaces defined in the author's paper 
 \cite{Ishiki2019}. 
This  construction of Cantor spaces  has been  utilized in fractal geometry (for example, \cite{joyce1995packing}). 
\end{df}

For $\alpha\in \shrinkset$, and for 
$\mathbf{m}=\{m_{i}\}_{i\in \zz_{\ge 0}}\in \numset$,  and for  $k\in \zz_{\ge 1}$,  
we define the \emph{$k$-shifted shrinking sequence $\alpha^{\{k\}}$ of $\alpha$} by $\alpha^{\{k\}}(n)=\alpha(n+k)$, and 
define \emph{$k$-shifted  sequence 
$\mathbf{m}^{\{k\}}=\{m_{i}^{\{k\}}\}_{i\in \zz_{\ge 0}}$ of $\mathbf{m}$} by 
${m}^{\{k\}}_{i}={m}_{i+k}$. 

From the definition of $k$-shifted sequences, 
we deduce  the following lemma.  
\begin{lem}\label{lem:shiftedCantor}
Let $\mathbf{m}\in \numset$, and 
let $\alpha\in \shrinkset$. 
Let  $k\in \zz_{\ge 0}$. 
Then the metric  space 
$\left(S(\mathbf{m}^{\{k\}}),
(\alpha^{\{k\}})_{\sharp}\right)$ is isometric to the closed ball $B(x,\alpha(k))$ in $(S(\mathbf{m}), \alpha_{\sharp})$ for all $x\in S(\mathbf{m})$. 
\end{lem}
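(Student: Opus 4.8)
The plan is to produce an explicit isometry, after first identifying the ball $B(x,\alpha(k))$ concretely. Since $\alpha$ is strictly decreasing and $\alpha(\infty)=0$, for $y\in S(\mathbf{m})$ we have $\alpha_{\sharp}(x,y)=\alpha(v(x,y))\le \alpha(k)$ if and only if $v(x,y)\ge k$, that is, if and only if $y(i)=x(i)$ for all $i\in\{0,\dots,k-1\}$. So the first step is to record that $B(x,\alpha(k))$ equals the cylinder set $C_{x}=\{\,y\in S(\mathbf{m})\mid y(i)=x(i)\text{ for all }i<k\,\}$.

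Next I would define $\Phi\colon S(\mathbf{m}^{\{k\}})\to C_{x}$ by concatenation: given $z\in S(\mathbf{m}^{\{k\}})$, let $\Phi(z)$ be the map that takes the value $x(i)$ for $i\in\{0,\dots,k-1\}$ and the value $z(i-k)$ for $i\ge k$. The identity $m^{\{k\}}_{i}=m_{i+k}$ guarantees that the constraints $z(j)\in\{0,\dots,m^{\{k\}}_{j}-1\}$ translate into $\Phi(z)(i)\in\{0,\dots,m_{i}-1\}$, so $\Phi(z)\in S(\mathbf{m})$, and by construction $\Phi(z)\in C_{x}$. The assignment $w\mapsto (j\mapsto w(j+k))$ is a two-sided inverse, so $\Phi$ is a bijection onto $B(x,\alpha(k))=C_{x}$.

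Finally I would verify that $\Phi$ preserves distances. For distinct $z,z'\in S(\mathbf{m}^{\{k\}})$ put $n=v(z,z')$ (the valuation on $S(\mathbf{m}^{\{k\}})$). Then $\Phi(z)$ and $\Phi(z')$ agree on $\{0,\dots,k-1\}$ (both equal $x$ there) and on $\{k,\dots,k+n-1\}$ (since $z$ and $z'$ agree below $n$), but differ at the coordinate $k+n$, so the valuation of $\Phi(z),\Phi(z')$ in $S(\mathbf{m})$ is $k+n$. Hence $\alpha_{\sharp}(\Phi(z),\Phi(z'))=\alpha(k+n)=\alpha^{\{k\}}(n)=(\alpha^{\{k\}})_{\sharp}(z,z')$, and the case $z=z'$ is trivial. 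I do not expect a genuine obstacle here; the only points requiring care are the index arithmetic in the shift and the identification of the closed ball with the cylinder set $C_{x}$, which uses strict monotonicity of $\alpha$.
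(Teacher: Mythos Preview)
Your argument is correct and is exactly the natural one: identify the closed ball with the cylinder set via the strict monotonicity of $\alpha$, then use the shift-and-prefix bijection and check $v(\Phi(z),\Phi(z'))=k+v(z,z')$. The paper in fact states Lemma~\ref{lem:shiftedCantor} without proof, so your write-up supplies precisely the routine verification the author left implicit.
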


\begin{lem}\label{lem:numCantor}
Let  
$\mathbf{m}=\{m_{i}\}_{i\in \zz_{\ge 0}}\in \numset$ 
and 
$\alpha\in \shrinkset$. 
If $r\in (0, \infty)$ and $n\in \zz_{\ge 0}$ satisfy 
that $\alpha(n+1)\le r< \alpha(n)$, then 
we have 
\[
\mnum_{\alpha_{\sharp}}(S(\mathbf{m}), r)=
m_{0}\cdots m_{n}. 
\]
Moreover, 
if $R, r\in (0, \infty)$ and $n, k\in \zz_{\ge 0}$ satisfy $R>r$, $\alpha(n+1)\le R<\alpha(n)$,  and 
$\alpha(n+k+1)\le r<\alpha(n+k)$, then 
\[
\mnum_{\alpha_{\sharp}}(B(x, R), r)=
\begin{cases}
1 & \text{if $k=0$;}\\
m_{n+1}\cdots m_{n+k} & \text{otherwise.}
\end{cases}
\]
\end{lem}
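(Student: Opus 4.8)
The plan is to reduce both statements to an explicit description of the closed balls of $(S(\mathbf{m}), \alpha_{\sharp})$, together with the standard ultrametric fact that the least size of an $r$-net equals the number of distinct closed $r$-balls.

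First I would record the shape of a closed ball. Since $\alpha$ is strictly decreasing, for $s\in(0,\infty)$ and $k\in\zz_{\ge 0}$ with $\alpha(k+1)\le s<\alpha(k)$ we have $\alpha(j)\le s$ iff $j\ge k+1$; hence $\alpha_{\sharp}(x,y)\le s$ is equivalent to $v(x,y)\ge k+1$, i.e. $x(i)=y(i)$ for all $i\in\{0,\dots,k\}$. Thus $B(x,s)=\{\,y\in S(\mathbf{m})\mid y(i)=x(i)\text{ for }0\le i\le k\,\}$, and as $x$ varies these sets partition $S(\mathbf{m})$ into $m_{0}m_{1}\cdots m_{k}$ nonempty pieces, one for each choice of the first $k+1$ coordinates. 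I would also note the elementary ultrametric fact: if $\alpha_{\sharp}(a,y)\le s$ then $B(a,s)=B(y,s)$, so for any subset $Y$ the sets $B(y,s)\cap Y$ $(y\in Y)$ partition $Y$, every $s$-net of $Y$ must meet each such piece, and choosing one point from each piece gives an $s$-net; hence $\mnum_{\alpha_{\sharp}}(Y,s)$ equals the number of these distinct nonempty pieces.

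For the first assertion, apply this with $s=r$ and $k=n$: the balls $B(x,r)$ partition $S(\mathbf{m})$ into exactly $m_{0}\cdots m_{n}$ pieces, so $\mnum_{\alpha_{\sharp}}(S(\mathbf{m}),r)=m_{0}\cdots m_{n}$. For the second assertion, observe first that $\alpha(n+1)\le R<\alpha(n)$ gives $B(x,R)=B(x,\alpha(n+1))$, which by Lemma \ref{lem:shiftedCantor} (with $k=n+1$) is isometric to $(S(\mathbf{m}^{\{n+1\}}),(\alpha^{\{n+1\}})_{\sharp})$; since $\mnum$ is an isometry invariant it suffices to compute $\mnum_{(\alpha^{\{n+1\}})_{\sharp}}(S(\mathbf{m}^{\{n+1\}}),r)$. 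If $m=0$, then $\alpha(n+1)\le r<\alpha(n)$ forces $B(x,R)=B(x,r)$ as sets, so a single point is an $r$-net and the value is $1$. If $m\ge 1$, then $\alpha(n+m+1)\le r<\alpha(n+m)$ translates, via $\alpha^{\{n+1\}}(j)=\alpha(j+n+1)$, into $\alpha^{\{n+1\}}(m)\le r<\alpha^{\{n+1\}}(m-1)$; applying the already-proven first assertion to $(S(\mathbf{m}^{\{n+1\}}),(\alpha^{\{n+1\}})_{\sharp})$ with $n'=m-1$ yields $m^{\{n+1\}}_{0}\cdots m^{\{n+1\}}_{m-1}=m_{n+1}\cdots m_{n+m}$.

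The argument is essentially bookkeeping; the only points requiring care are the strictness of the inequalities (so that each radius falls in a unique annulus $[\alpha(k+1),\alpha(k))$ and hence determines a well-defined ball structure), the degenerate case $m=0$ in which $B(x,R)$ is already a single $r$-ball even though $R>r$, and keeping the index shifts consistent when invoking Lemma \ref{lem:shiftedCantor}. I do not expect any genuine obstacle beyond these.
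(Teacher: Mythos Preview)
Your argument is correct and follows essentially the same route as the paper: identify closed $r$-balls with cylinder sets determined by the first $n+1$ coordinates, count them to get $m_{0}\cdots m_{n}$, and then invoke Lemma \ref{lem:shiftedCantor} to reduce the relative count for $B(x,R)$ to the absolute count in the shifted space. The paper is terser (it simply exhibits the explicit net $A=\{x:x(k)=0\text{ for }k\ge n+1\}$ and asserts minimality, and handles the second part in one line), whereas you spell out the ultrametric ball-partition reasoning, the degenerate case $m=0$, and the index shift, but the underlying proof is the same.
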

\begin{proof}
Let $A$ be the set of all $x\in S(\mathbf{m})$ with 
$x(k)=0$ for all $k\in\zz_{\ge n+1}$. 
Then $A$ is a minimal  $r$-net, and hence  
$\mnum_{\alpha_{\sharp}}(S(\mathbf{m}), r)=
\card(A)=m_{0}\cdots m_{n}$. 
The latter part follows from Lemma \ref{lem:shiftedCantor}. 
\end{proof}

The
upper and lower box dimensions 
of  the $(\mathbf{m}, \alpha)$-Cantor ultrametric space
can be  computed  using 
$\mathbf{m}$ and $\alpha$.

\begin{prop}\label{prop:boxdimCantor}
For all $\mathbf{m}=\{m_{i}\}_{i\in \zz_{\ge 0}}\in \numset$ and 
 $\alpha \in \shrinkset$, 
we have 
\begin{align}
&\lbdim(S(\mathbf{m}), \alpha_{\sharp})=
\liminf_{n\to\infty}\frac{\log(m_{0}\cdots m_{n})}{-\log \alpha(n+1)}, \\
&\ubdim(S(\mathbf{m}), \alpha_{\sharp})=
\limsup_{n\to\infty}\frac{\log(m_{0}\cdots m_{n})}{-\log \alpha(n)}.
\end{align}
\end{prop}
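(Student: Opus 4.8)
The plan is to derive both identities directly from Lemma \ref{lem:numCantor} by analysing the function
\[
g(r)=\frac{\log \mnum_{\alpha_{\sharp}}(S(\mathbf{m}), r)}{-\log r}, \qquad r\in(0,\infty),
\]
whose $\limsup$ (resp.\ $\liminf$) as $r\to 0$ is, by definition, $\ubdim(S(\mathbf{m}), \alpha_{\sharp})$ (resp.\ $\lbdim(S(\mathbf{m}), \alpha_{\sharp})$). Set $c_{n}=\log(m_{0}\cdots m_{n})$ and $I_{n}=[\alpha(n+1), \alpha(n))$. Since $\alpha\in\shrinkset$, the intervals $\{I_{n}\}_{n\ge 0}$ partition $(0, \alpha(0))$, each $r<\alpha(0)$ lies in a unique $I_{n}$, and $r\to 0$ forces $n\to\infty$; moreover all but finitely many $I_{n}$ lie in $(0, 1)$, and only those affect the limits as $r\to 0$. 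By Lemma \ref{lem:numCantor} we have $g(r)=c_{n}/(-\log r)$ for $r\in I_{n}$. For $I_{n}\subseteq(0, 1)$ the map $r\mapsto -\log r$ is positive and strictly decreasing, so $g$ is strictly increasing on $I_{n}$; hence $g$ attains its minimum $c_{n}/(-\log \alpha(n+1))$ at the left endpoint $r=\alpha(n+1)$, while its supremum over $I_{n}$ equals $c_{n}/(-\log \alpha(n))$ and is approached, though not attained, as $r\uparrow \alpha(n)$.

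For the upper box dimension, write $L=\limsup_{n\to\infty}c_{n}/(-\log\alpha(n))$. Given $\epsilon>0$, choose $N$ so large that $I_{n}\subseteq(0, 1)$ and $c_{n}/(-\log\alpha(n))<L+\epsilon$ for all $n\ge N$; then for every $r$ with $0<r<\alpha(N)$, say $r\in I_{n}$ with $n\ge N$, we get $g(r)<c_{n}/(-\log\alpha(n))<L+\epsilon$, so $\ubdim(S(\mathbf{m}), \alpha_{\sharp})\le L$. Conversely, take a subsequence $n_{k}\to\infty$ with $c_{n_{k}}/(-\log\alpha(n_{k}))\to L$, and, using that this value is the (unattained) supremum of $g$ on $I_{n_{k}}$, pick $r_{k}\in I_{n_{k}}$ with $g(r_{k})>c_{n_{k}}/(-\log\alpha(n_{k}))-1/k$; then $r_{k}\to 0$ and $g(r_{k})\to L$, so $\ubdim(S(\mathbf{m}), \alpha_{\sharp})\ge L$. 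This proves the second formula.

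The first formula is obtained identically, now using the minimum of $g$ on each $I_{n}$, which is genuinely attained at the left endpoint. Putting $\ell=\liminf_{n\to\infty}c_{n}/(-\log\alpha(n+1))$, one gets $\lbdim(S(\mathbf{m}), \alpha_{\sharp})\ge\ell$ from $g(r)\ge c_{n}/(-\log\alpha(n+1))$ for all $r\in I_{n}$ with $I_{n}\subseteq(0, 1)$, and $\lbdim(S(\mathbf{m}), \alpha_{\sharp})\le\ell$ by evaluating $g$ at the points $r=\alpha(n+1)$ along a subsequence realising the liminf, where $g(\alpha(n+1))=c_{n}/(-\log\alpha(n+1))$ by Lemma \ref{lem:numCantor}. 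I do not expect any genuine obstacle here: the only points requiring a little care are the monotonicity of $g$ on each $I_{n}$ and the (harmless) fact that the supremum $c_{n}/(-\log\alpha(n))$ is not attained, together with the routine bookkeeping that discards the finitely many $I_{n}$ meeting $[1, \infty)$.
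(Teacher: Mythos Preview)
Your proof is correct and follows essentially the same approach as the paper: both arguments invoke Lemma~\ref{lem:numCantor} to identify $\mnum_{\alpha_\sharp}(S(\mathbf{m}),r)$ on each interval $[\alpha(n+1),\alpha(n))$, then use the monotonicity of $r\mapsto 1/(-\log r)$ on $(0,1)$ to sandwich $g(r)$ between $c_n/(-\log\alpha(n+1))$ and $c_n/(-\log\alpha(n))$, from which the two formulas follow directly from the definitions of $\lbdim$ and $\ubdim$. The paper's proof is simply a terse version of yours, recording the key inequality and leaving the routine $\limsup/\liminf$ bookkeeping implicit, while you have written it out in full; there is no substantive difference in method.
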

\begin{proof}
Take $N\in \zz_{\ge 0}$ with $\alpha(N)<1$. 
Take  $r\in (0, \alpha(N))$, and  
 $n\in \zz_{\ge N}$ with
$\alpha(n+1)\le r< \alpha(n)$. 
Then,  by $\alpha(n)<1$, 
\[
\frac{1}{-\log\alpha(n+1)}\le  \frac{1}{-\log r}<
\frac{1}{-\log \alpha(n)}. 
\]
From  these inequalities,  and Lemma \ref{lem:numCantor},  and the definitions of 
the  lower and upper box dimensions, Proposition \ref{prop:boxdimCantor} follows. 
\end{proof}

To calculate the Hausdorff dimension and 
the packing dimension, we use the local dimensions of 
measures on metric spaces.
Let $(X, d)$ be a separable metric space, 
and $\mu$ be a finite
Borel   measure on $X$. 
For every $x\in X$, 
we define the upper (reps.~lower) local dimension 
$\ulocdim\mu(x)$ (resp.~$\llocdim\mu(x)$) by 
\begin{align}
&\ulocdim\mu(x)=\limsup_{r\to 0}
\frac{\log\mu(B(x, r))}{\log r}, \\
&\llocdim\mu(x)=\liminf_{r\to 0}
\frac{\log \mu(B(x, r))}{\log r}. 
\end{align}

The following theorem is well-known. 
The proofs  on the Hausdorff dimension, 
and the packing dimension  are presented in 
\cite[Lemmas 2.1 and 2.2]{cutler1990connecting}, and 
\cite[Corollary 3.20]{cutler1995density}, respectively. 
The paper 
\cite[Corollary 2.9]{olsen1995multifractal} provides its sophisticated version.  
Proposition 2.3 in the book \cite{falconer1997techniques} 
 treats Theorem \ref{thm:locdim} only  in the Euclidean setting; 
 however,  that proof is also valid  for  the general setting
  since the so-called ``$5r$ covering lemma'' holds true in general metric spaces (see \cite[Theorem 1.2]{H2001}).

\begin{thm}\label{thm:locdim}
Let $(X, d)$ be a separable metric space, 
and $\mu$ be a finite
Borel   measure on $X$. 
Let $s\in [0, \infty)$. 
Then, we obtain: 
\begin{enumerate}
\item 
If $s\le \llocdim\mu(x)$ for all $x\in X$
and $\mu(X)>0$, 
then we have 
$s\le \hdim(X, d)$. 
\item 
If $\llocdim\mu(x)\le s$ for all $x\in X$, 
then we have 
$\hdim(X, d)\le s$. 
\item 
If $s\le \ulocdim\mu(x)$ for all $x\in X$
and $\mu(X)>0$, 
then we have 
$s\le \pdim(X, d)$. 
\item 
If $\ulocdim\mu(x)\le s$ for all $x\in X$, 
then  we have
$\pdim(X, d)\le s$. 

\end{enumerate}

\end{thm}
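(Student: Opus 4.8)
The plan is to prove all four statements by the standard route from a pointwise control of the ratio $\log\mu(B(x,r))/\log r$ to a global statement about covering or packing sums: statements (1) and (3) are ``lower'' bounds that use the total mass of $\mu$ from below, while (2) and (4) are ``upper'' bounds that use it from above. The one bookkeeping point to keep in mind throughout is that $\log r<0$ for small $r$, so for a threshold $t$ one has $\llocdim\mu(x)>t$ if and only if $\mu(B(x,r))<r^{t}$ for all sufficiently small $r$, and $\ulocdim\mu(x)>t$ if and only if $\mu(B(x,r))<r^{t}$ for arbitrarily small $r$; the inequalities reverse in the cases $\llocdim\mu(x)<t$ and $\ulocdim\mu(x)<t$.

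For (1), a mass distribution principle, I would fix $t<s$, which we may assume positive, choose for each $x$ a radius $\rho(x)>0$ with $\mu(B(x,r))\le r^{t}$ for all $r\le\rho(x)$, and decompose $X=\bigcup_{k\ge 1}X_{k}$ with $X_{k}$ the (Borel) set of $x$ for which $\rho(x)\ge 1/k$; since $\mu(X)>0$ and $\mu$ is a finite Borel measure, $\mu^{*}(X_{k})>0$ for some $k$, where $\mu^{*}$ is the associated outer measure. Given $\delta<1/k$ and any cover $\{A_{i}\}$ of $X$ by sets of diameter $<\delta$, each $A_{i}$ meeting $X_{k}$ is contained in a closed ball of radius $\delta_{d}(A_{i})$ centered at a point of $X_{k}$, hence $\mu^{*}(A_{i})\le\delta_{d}(A_{i})^{t}$; summing over the $A_{i}$ that meet $X_{k}$ yields $\mathcal{H}^{t}_{\delta}(X)\ge\mu^{*}(X_{k})$ uniformly in $\delta$, so $\mathcal{H}^{t}(X)>0$ and $\hdim(X,d)\ge t$. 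Letting $t\uparrow s$ finishes (1).

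For (2) and (3) I would invoke the $5r$-covering lemma, which is valid in any metric space. For (2), fix $t>s$; for every $x$ there are arbitrarily small $r$ with $\mu(B(x,r))>r^{t}$, so for each $\delta>0$ the closed balls $B(x,r)$ with $r<\delta/10$ and $\mu(B(x,r))>r^{t}$ cover $X$, and the $5r$-lemma extracts a disjoint subfamily $\{B(x_{i},r_{i})\}$ with $X\subseteq\bigcup_{i}B(x_{i},5r_{i})$, whence $\mathcal{H}^{t}_{\delta}(X)\le\sum_{i}(10r_{i})^{t}<10^{t}\sum_{i}\mu(B(x_{i},r_{i}))\le 10^{t}\mu(X)$, the last step using disjointness; so $\mathcal{H}^{t}(X)<\infty$ and $\hdim(X,d)\le t\downarrow s$. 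For (3), fix $t<s$, again positive, and let $S\subseteq X$; for each $x\in S$ and each $\delta$ there is $r<\delta/5$ with $\mu(B(x,5r))<(5r)^{t}$ (take a good radius $\rho<\delta$ from the $\limsup$ and set $r=\rho/5$), the balls $B(x,r)$ cover $S$, and the $5r$-lemma yields disjoint $\{B(x_{i},r_{i})\}$ with centers in $S$, radii $<\delta/5$, and $S\subseteq\bigcup_{i}B(x_{i},5r_{i})$, so that $\mu^{*}(S)\le\sum_{i}\mu(B(x_{i},5r_{i}))<5^{t}\sum_{i}r_{i}^{t}$. Since $\{r_{i}\}\in\mathbf{Pa}_{\delta}(S)$ this shows $\widetilde{\mathcal{P}}^{t}_{0}(S)\ge 5^{-t}\mu^{*}(S)$, and applying this to every member of an arbitrary countable cover of $X$ gives $\mathcal{P}^{t}(X)\ge 5^{-t}\mu(X)>0$, so $\pdim(X,d)\ge t\uparrow s$.

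Statement (4) as written follows at once from (2), since $\llocdim\mu(x)\le\ulocdim\mu(x)\le s$ for all $x$; the presumably intended stronger bound $\pdim(X,d)\le s$ follows from the decomposition of (1) applied to the pre-packing measure, namely with $t>s$ one has $\mu(B(x,r))>r^{t}$ for all small $r$, so on each piece $X_{k}$ every $\delta$-packing satisfies $\sum_{i}r_{i}^{t}<\mu(X)$, giving $\widetilde{\mathcal{P}}^{t}_{0}(X_{k})<\infty$ and hence $\widetilde{\mathcal{P}}^{t'}_{0}(X_{k})=0$ for $t'>t$, so that $\mathcal{P}^{t'}(X)=0$ for all $t'>s$. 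I expect the only genuinely delicate point to be part (3): the pre-packing measure is assembled from disjoint balls centered in the set, so at every scale one must produce a packing whose $t$-sum is bounded below by a fixed multiple of $\mu(S)$, and the mechanism that makes this work is to choose the packing radii $r_{i}$ so that it is the \emph{enlarged} balls $B(x_{i},5r_{i})$, the ones the $5r$-lemma uses to cover $S$, that carry small $\mu$-mass, which is why one packs with one fifth of a good $\limsup$-radius. The remaining points — the sign of $\log r$, the reduction to pieces on which the threshold radius is bounded below, and the harmless passage from $\mu$ to $\mu^{*}$ (a finite Borel measure on a metric space is Borel regular, so its outer measure is continuous along increasing unions) — are routine.
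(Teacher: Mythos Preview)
The paper does not prove this theorem itself; it merely cites Cutler, Olsen, and Falconer, and remarks that Falconer's Euclidean argument extends to general separable metric spaces because the $5r$ covering lemma holds there. Your proof is exactly that standard argument---the mass distribution principle for (1), the $5r$ lemma for (2) and (3), and the trivial reduction of (4) as stated to (2)---correctly carried out, including the point the paper singles out, namely that the $5r$ lemma is the only nontrivial ingredient needed beyond the Euclidean case.
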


\begin{df}\label{df:distribution}
Let $\mathbf{m}=\{m_{i}\}_{i\in \zz_{\ge 0}}\in \numset$ and 
$\alpha\in \shrinkset$. 
We denote by $\mu_{\mathbf{m}, \alpha}$ 
the
probability  Borel 
measure on  $(S(\mathbf{m}), \alpha_{\sharp})$ satisfying  
\[
\mu_{\mathbf{m}, \alpha}(B(x, \alpha(n+1)))=\frac{1}{m_{0}\cdots m_{n}}
\]
for all $x\in S(\mathbf{m})$ and $n\in \zz_{\ge0}$. 
Note that $\mu_{\mathbf{m}, \alpha}$ always exists 
since 
it is the countable  product of uniform measures on $\{0, 1, \dots, m_i-1\}$
(see also the argument of  ``repeated subdivision'' in 
\cite{falconer1997techniques} and \cite{falconer2004fractal}). 
\end{df}

Due to the measure $\mu_{\mathbf{m}, \alpha}$, 
we obtain the formulas to calculate the 
Hausdorff and packing dimensions of $(S(\mathbf{m}), \alpha_{\sharp})$. 
\begin{prop}\label{prop:hpCantor}
For all  
$\mathbf{m}=\{m_{i}\}_{i\in \zz_{\ge 0}}\in \numset$ 
and  $\alpha \in \shrinkset$, 
we have 
\begin{align}
&\hdim(S(\mathbf{m}), \alpha_{\sharp})=
\liminf_{n\to\infty}\frac{\log(m_{0}\cdots m_{n})}{-\log \alpha(n+1)}, \\
&\pdim(S(\mathbf{m}), \alpha_{\sharp})=
\limsup_{n\to\infty}\frac{\log(m_{0}\cdots m_{n})}{-\log \alpha(n)}. 
\end{align}

\end{prop}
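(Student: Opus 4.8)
The plan is to compute the Hausdorff and packing dimensions of the $(\mathbf{m},\alpha)$-Cantor ultrametric space by applying Theorem \ref{thm:locdim} to the natural measure $\mu_{\mathbf{m},\alpha}$ from Definition \ref{df:distribution}. The key is that balls in $S(\mathbf{m})$ come in only countably many radii up to the relevant scale: for $x\in S(\mathbf{m})$ and $r\in(0,\infty)$ with $\alpha(n+1)\le r<\alpha(n)$ we have $B(x,r)=B(x,\alpha(n+1))$ because the metric $\alpha_{\sharp}$ takes values only in $\{\alpha(k):k\in\zz_{\ge0}\}\cup\{0\}$ and $\alpha$ is strictly decreasing. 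Hence $\mu_{\mathbf{m},\alpha}(B(x,r))=1/(m_{0}\cdots m_{n})$ for every such $r$, which is a quantity not depending on $x$. This makes the local dimensions constant over $X$.

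First I would fix $x\in S(\mathbf{m})$ and analyze $\log\mu_{\mathbf{m},\alpha}(B(x,r))/\log r$ as $r\to0$. Writing $r$ in the form above with $\alpha(n+1)\le r<\alpha(n)$, and noting $-\log r\in(-\log\alpha(n),-\log\alpha(n+1)]$ for $n$ large (so that $\alpha(n)<1$), we get
\[
\frac{\log(m_{0}\cdots m_{n})}{-\log\alpha(n+1)}\le\frac{\log\mu_{\mathbf{m},\alpha}(B(x,r))}{\log r}<\frac{\log(m_{0}\cdots m_{n})}{-\log\alpha(n)}.
\]
Taking $\liminf$ and $\limsup$ as $r\to0$ (equivalently $n\to\infty$), and observing that the two bounding expressions differ only in whether $\alpha(n)$ or $\alpha(n+1)$ appears in the denominator — an index shift that does not affect the liminf or limsup — I would conclude
\[
\llocdim\mu_{\mathbf{m},\alpha}(x)=\liminf_{n\to\infty}\frac{\log(m_{0}\cdots m_{n})}{-\log\alpha(n+1)},\qquad
\ulocdim\mu_{\mathbf{m},\alpha}(x)=\limsup_{n\to\infty}\frac{\log(m_{0}\cdots m_{n})}{-\log\alpha(n)},
\]
and both values are independent of $x$. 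Here one must be slightly careful that the liminf with denominator $-\log\alpha(n)$ versus $-\log\alpha(n+1)$ agree; since these are just the $n$-th and $(n+1)$-th terms of the same sequence $\bigl(\log(m_{0}\cdots m_{n})/(-\log\alpha(n+1))\bigr)_{n}$ shifted against $\bigl(\log(m_{0}\cdots m_{n})/(-\log\alpha(n))\bigr)_{n}$, a short sandwiching argument using the monotonicity of $-\log\alpha$ settles it — this interleaving bookkeeping is the one genuinely fiddly point.

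Finally I would feed these equalities into Theorem \ref{thm:locdim}: since $\mu_{\mathbf{m},\alpha}$ is a probability measure, $\mu_{\mathbf{m},\alpha}(X)>0$; parts (1) and (2) with $s=\llocdim\mu_{\mathbf{m},\alpha}(x)$ (constant in $x$) give $\hdim(S(\mathbf{m}),\alpha_{\sharp})$ equal to that common value, and parts (3) and (4) with $s=\ulocdim\mu_{\mathbf{m},\alpha}(x)$ give $\pdim(S(\mathbf{m}),\alpha_{\sharp})$ equal to that common value. This yields exactly the two displayed formulas in Proposition \ref{prop:hpCantor}. No step is deep; the main obstacle is simply verifying the index-shift does not disturb the $\liminf$/$\limsup$ and handling the boundary case where $\alpha(n)\ge1$ for small $n$ (which is irrelevant since only the behavior as $n\to\infty$ matters, and $\alpha(n)\to0<1$).
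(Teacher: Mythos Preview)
Your approach is exactly the paper's: compute the upper and lower local dimensions of the natural measure $\mu_{\mathbf{m},\alpha}$ and invoke Theorem~\ref{thm:locdim}. The paper's proof is essentially the sandwich you wrote down followed by an immediate appeal to that theorem.

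One correction to your ``fiddly point'': the claim that replacing $-\log\alpha(n+1)$ by $-\log\alpha(n)$ is an index shift that preserves $\liminf$ and $\limsup$ is \emph{false} in general. With $a_n=\log(m_0\cdots m_n)/(-\log\alpha(n+1))$ and $b_n=\log(m_0\cdots m_n)/(-\log\alpha(n))$, one can arrange $\liminf a_n=0$ while $\liminf b_n=1$ (take $-\log\alpha(n)=(n-1)!$ and $\log(m_0\cdots m_n)=(n-1)!$; then $a_n=1/n$ and $b_n=1$). So you cannot close the sandwich by equating the two liminfs.

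Fortunately you do not need to. On the interval $\alpha(n+1)\le r<\alpha(n)$ the quantity $\log\mu(B(x,r))/\log r=\log(m_0\cdots m_n)/(-\log r)$ varies continuously and monotonically over the whole range $[a_n,b_n)$ as $r$ moves; in particular $a_n$ is attained at $r=\alpha(n+1)$ and values arbitrarily close to $b_n$ are attained as $r\uparrow\alpha(n)$. Hence the $\liminf$ over $r\to0$ equals $\liminf_n a_n$ and the $\limsup$ over $r\to0$ equals $\limsup_n b_n$ \emph{directly}, with no comparison between the two sequences required. With this adjustment your argument is complete and matches the paper's.
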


\begin{proof}
Take $N\in \zz_{\ge 0}$ with $\alpha(N)<1$. 
Take $r\in (0, \alpha(N))$, and 
 $n\in \zz_{\ge N}$ with
$\alpha(n+1)\le r< \alpha(n)$. 
Take arbitrary $x\in S(\mathbf{m})$. 
Then  $B(x, r)=B(x, \alpha(n+1))$,  and 
$\mu_{\mathbf{m}, \alpha}(B(x, r))=1/(m_{0}\cdots m_{n})$. 
By $\alpha(n)<1$, we obtain 
\[
\frac{1}{-\log\alpha(n+1)}\le  \frac{1}{-\log r}<
\frac{1}{-\log\alpha(n)}
\]
Thus,  for all $x\in S(\mathbf{m})$ we obtain 
\begin{align}
\llocdim\mu_{\mathbf{m}, \alpha}(x)&=
\liminf_{n\to\infty}\frac{\log(m_{0}\cdots m_{n})}{-\log \alpha(n+1)}, \\
\ulocdim\mu_{\mathbf{m}, \alpha}(x)&=
\limsup_{n\to\infty}\frac{\log(m_{0}\cdots m_{n})}{-\log \alpha(n)}. 
\end{align}
Thus, Proposition \ref{prop:hpCantor} implies the proposition. 
\end{proof}

\begin{rmk}\label{rmk:equal}
From  Propositions \ref{prop:boxdimCantor} and 
\ref{prop:hpCantor}, 
it follows that for all $\mathbf{m}\in \numset$ and $\alpha\in \shrinkset$, 
we have 
$\lbdim(S(\mathbf{m}), \alpha_{\sharp})=
\hdim(S(\mathbf{m}), \alpha_{\sharp})$
and 
$\ubdim(S(\mathbf{m}), \alpha_{\sharp})=
\pdim(S(\mathbf{m}), \alpha_{\sharp})$.
\end{rmk}

\subsection{Proof of Theorems \ref{thm:prescribed} and \ref{thm:prescribed2}}
\begin{df}
For $(a_{1}, a_{2}, a_{3}, a_{4})\in \linset$, 
we say that a metric space
$(X, d)$
 is \emph{of the  dimensional type $(a_{1}, a_{2}, a_{3}, a_{4})$} if  we have 
\[
\hdim(X, d)=a_{1}, \pdim(X, d)=a_{2}, \ubdim(X, d)=a_{3}, 
\adim(X, d)=a_{4}.
\]
\end{df}

To prove  Theorems \ref{thm:prescribed} and \ref{thm:prescribed2}, 
we construct 
Cantor ultrametric spaces of the dimensional type 
$(a_{1}, a_{2}, a_{3}, a_{4})$ with 
$a_{i}\in \{0, 1, \infty\}$ for all $i\in \widehat{4}$
using Propositions 
\ref{prop:boxdimCantor}
and  \ref{prop:hpCantor}. 

\begin{df}
We denote by $\mathbf{2}$ the sequence in $\numset$
all of whose  entries are equal to $2$. 
\end{df}

Using  Lemma \ref{lem:upadim}, 
we find:
\begin{lem}\label{lem:upadim2}
If $\alpha\in \shrinkset$ satisfies 
$\alpha(i+1)/\alpha(i)\le 2^{-1}$ for all $i\in \zz_{\ge 0}$, 
then 
we have $\adim(S(\mathbf{2}), \alpha_{\sharp})\le 1$. 
\end{lem}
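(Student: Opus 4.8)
The plan is to apply Lemma~\ref{lem:upadim} with $N=2$ and $\lambda=2^{-1}$: since $\log 2/\log 2 = 1$, it suffices to verify that in $(S(\mathbf{2}),\alpha_{\sharp})$ every closed ball of radius $r\in(0,\infty)$ can be covered by at most two closed balls of radius $r/2$. Thus the whole argument reduces to this covering property, and the hypothesis $\alpha(i+1)/\alpha(i)\le 2^{-1}$ enters precisely to guarantee that halving the radius does not drop below the next scale of $\alpha$.

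First I would record the description of closed balls. Since $\alpha$ is strictly decreasing with limit $0$, for $s\ge\alpha(0)$ one has $B(x,s)=S(\mathbf{2})$, and for $\alpha(n+1)\le s<\alpha(n)$ one has $B(x,s)=\{\,y\in S(\mathbf{2})\mid y(i)=x(i)\ \text{for all}\ i\le n\,\}$; this is immediate from the definition of $\alpha_{\sharp}$, or may be read off Lemma~\ref{lem:shiftedCantor}. Now fix $x$ and $r$ and locate $r$ among the values $\alpha(k)$. In the case $\alpha(n+1)\le r<\alpha(n)$, the hypothesis applied at index $n+1$ gives $\alpha(n+2)\le\alpha(n+1)/2\le r/2$, while trivially $r/2<r<\alpha(n)$; hence $r/2$ lies in $[\alpha(n+1),\alpha(n))$ or in $[\alpha(n+2),\alpha(n+1))$. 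In the first subcase $B(x,r/2)=B(x,r)$ and one ball suffices. In the second subcase $B(x,r)=\{\,y\mid y(i)=x(i),\ i\le n\,\}$ while $B(x,r/2)$ is a ball of the form $\{\,y\mid y(i)=z(i),\ i\le n+1\,\}$, so $B(x,r)$ is the disjoint union of its two subsets obtained by fixing the coordinate $n+1$ to be $0$ or $1$, and each of these is a closed ball of radius $r/2$; two balls suffice. The remaining case $r\ge\alpha(0)$ is handled the same way: if $r/2\ge\alpha(0)$ one ball suffices, and otherwise the hypothesis at index $0$ gives $\alpha(1)\le\alpha(0)/2\le r/2<\alpha(0)$, so $S(\mathbf{2})$ splits into its two subsets $\{\,y\mid y(0)=0\,\}$ and $\{\,y\mid y(0)=1\,\}$, each a closed ball of radius $r/2$.

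I do not anticipate a real obstacle; the only care needed is the bookkeeping with the half-open intervals $[\alpha(n+1),\alpha(n))$, the boundary case $r\ge\alpha(0)$, and applying the hypothesis at the correct index (at $i=n+1$ to control $\alpha(n+2)$, not at $i=n$). Once the covering property is established, Lemma~\ref{lem:upadim} closes the proof immediately.
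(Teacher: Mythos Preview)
Your proposal is correct and follows exactly the approach the paper indicates: the paper simply states that Lemma~\ref{lem:upadim2} is obtained ``by Lemma~\ref{lem:upadim}'' without further detail, and you have carefully supplied the verification that every closed ball of radius $r$ in $(S(\mathbf{2}),\alpha_{\sharp})$ is covered by at most two closed balls of radius $r/2$, which is precisely what Lemma~\ref{lem:upadim} with $N=2$, $\lambda=2^{-1}$ requires.
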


Let $f, g\colon [0, \infty)\to [0, \infty)$ be
arbitrary  functions. 
We write 
$f(\epsilon)=O(g(\epsilon))(\epsilon \to 0)$ if 
there exist $M\in (0, \infty)$
and  $\epsilon_{0}\in [0, \infty)$ such that 
every  $\epsilon \in [0, \epsilon_{0})$ satisfies 
$f(\epsilon)\le M\cdot g(\epsilon)$.

\begin{lem}\label{lem:0000}
For all $\mathbf{m}\in \numset$ and $\alpha\in \shrinkset$, 
the space  $(S(\mathbf{m}), \alpha_{\sharp})$ contains 
 a Cantor  subspace  of  the dimensional type
$(0, 0, 0, 0)$. 
\end{lem}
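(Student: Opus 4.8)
The plan is to realize the desired Cantor subspace as a heavily ``sparsified'' binary Cantor ultrametric space inside $(S(\mathbf{m}),\alpha_{\sharp})$: we retain only two admissible coordinate values at a very thin sequence of levels, freeze the coordinate to $0$ at every other level, and arrange the gaps between the retained levels to grow super-exponentially. Concretely, fix a strictly increasing sequence $n_{0}<n_{1}<n_{2}<\cdots$ in $\zz_{\ge 0}$ (to be chosen below) and set
\[
Y=\{\, x\in S(\mathbf{m}) : x(i)=0 \text{ whenever } i\notin\{n_{k}\mid k\in\zz_{\ge 0}\},\ x(n_{k})\in\{0,1\} \text{ for all } k\,\},
\]
equipped with the restriction of $\alpha_{\sharp}$ to $Y^{2}$. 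Putting $\beta(k)=\alpha(n_{k})$, we have $\beta\in\shrinkset$, and the map sending $a\in S(\mathbf{2})$ to the point of $Y$ whose $n_{k}$-th coordinate is $a(k)$ and whose other coordinates vanish is an isometry of $(S(\mathbf{2}),\beta_{\sharp})$ onto $(Y,\alpha_{\sharp})$: if $a$ and $a'$ first disagree at index $k_{0}$, their images first disagree at index $n_{k_{0}}$, so the $\alpha_{\sharp}$-distance of the images equals $\alpha(n_{k_{0}})=\beta(k_{0})=\beta_{\sharp}(a,a')$. Hence $Y$ is a Cantor space, and it remains to choose the $n_{k}$ so that $(S(\mathbf{2}),\beta_{\sharp})$ has the dimensional type $(0,0,0,0)$.

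Choose the $n_{k}$ recursively so that, writing $b(k):=-\log\alpha(n_{k})$, one has $b(0)\ge 1$ and $b(k+1)\ge b(k)+2^{k+1}$ for every $k$; this is possible because $-\log\alpha(n)\to\infty$ as $n\to\infty$. Then $b(k)\ge 2^{k+1}-1$, which grows faster than any linear function, so by Proposition~\ref{prop:hpCantor} (together with the remark following it, or Proposition~\ref{prop:boxdimCantor}) applied to $(S(\mathbf{2}),\beta_{\sharp})$,
\[
\hdim(S(\mathbf{2}),\beta_{\sharp})=\liminf_{k\to\infty}\frac{(k+1)\log 2}{b(k+1)}=0,\qquad \pdim(S(\mathbf{2}),\beta_{\sharp})=\ubdim(S(\mathbf{2}),\beta_{\sharp})=\limsup_{k\to\infty}\frac{(k+1)\log 2}{b(k)}=0.
\]

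The crux is $\adim(S(\mathbf{2}),\beta_{\sharp})=0$. Fix $\lambda\in(0,\infty)$; we must find $C\in(0,\infty)$ with $\mnum_{\beta_{\sharp}}(B(x,R),r)\le C(R/r)^{\lambda}$ for all $x$ and all $R>r>0$. If $R\ge\beta(0)$ then $B(x,R)$ is the whole space and one argues in the same way using the first part of Lemma~\ref{lem:numCantor}, so assume $R<\beta(0)$ and pick $q,m\in\zz_{\ge 0}$ with $\beta(q+1)\le R<\beta(q)$ and $\beta(q+m+1)\le r<\beta(q+m)$. By Lemma~\ref{lem:numCantor}, $\mnum_{\beta_{\sharp}}(B(x,R),r)\le 2^{m}$, while $R/r>\beta(q+1)/\beta(q+m)=\exp\bigl(b(q+m)-b(q+1)\bigr)$, and for $m\ge 2$,
\[
b(q+m)-b(q+1)=\sum_{i=q+1}^{q+m-1}\bigl(b(i+1)-b(i)\bigr)\ge\sum_{i=q+1}^{q+m-1}2^{i+1}\ge 2^{m}.
\]
Thus it suffices to take $C\ge 2$ with $2^{m}\le C\exp(\lambda 2^{m})$ for all $m\ge 2$ (the cases $m\le 1$ requiring only $C\ge 2$ and $R/r>1$), and such $C$ exists because $m\log 2-\lambda 2^{m}$ is bounded above on $\zz_{\ge 0}$. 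Hence $(Y,\alpha_{\sharp})\subseteq(S(\mathbf{m}),\alpha_{\sharp})$ is a Cantor subspace of the dimensional type $(0,0,0,0)$. The one genuinely delicate point is this Assouad estimate, which is precisely what dictates the super-exponential growth of the gaps $b(k+1)-b(k)$; once that growth is in place, every other assertion reduces to the dimension formulas already established.
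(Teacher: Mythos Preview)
Your proof is correct and follows essentially the same route as the paper's: both arguments pass to a sparse binary subspace $Y\cong(S(\mathbf{2}),\beta_\sharp)$ with $\beta(k)=\alpha(n_k)$ for a rapidly growing sequence $(n_k)$, and then show $\adim(S(\mathbf{2}),\beta_\sharp)=0$. The only differences are cosmetic: the paper imposes the growth condition $\beta(i+1)<2^{-s^2}\beta(t)$ for all $s+t\le i$ and concludes via Proposition~\ref{prop:adimform} that $\eta_{S(\mathbf{2}),\beta_\sharp}(\epsilon)\to 0$, whereas you impose $b(k+1)-b(k)\ge 2^{k+1}$ and verify the Assouad condition directly from the definition; also, your separate computation of $\hdim=\pdim=\ubdim=0$ is unnecessary once $\adim=0$ is established, by Theorem~\ref{thm:diminequalities}.
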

\begin{proof}
First, we can find   a sequence  $\{k(i)\}_{i\in \zz_{\ge 0}}$ in 
$\zz_{\ge 0}$ satisfying  that
\begin{enumerate}[label=\textup{(\arabic*)}]
\item $\alpha(k(0))<1$; 
\item for all $i\in \zz_{\ge 0}$, 
we have
$k(i)<k(i+1)$; 
\item\label{item:37333} 
if $s, t, i\in \zz_{\ge 0}$ satisfy $s+t\le i$, 
then 
$\alpha(k(i+1))<2^{-s^{2}}\alpha(k(t))$. 
\end{enumerate}
We define a subset  $T$ of $S(\mathbf{m})$ by 
the set of all $x\in S(\mathbf{m})$ such that 
$x(n)\in \{0, 1\}$ if $n=k(i)$ for some $i\in \zz_{\ge 0}$; otherwise $x(n)=0$. 
We define $\beta\in \shrinkset$ by 
$\beta(i)=\alpha(k(i))$. 
Then the space $(T, \alpha_{\sharp})$ is isometric to 
$(S(\mathbf{2}), \beta_{\sharp})$. 
Due to  Theorem \ref{thm:diminequalities}, 
it suffices to show that 
$\adim(S(\mathbf{2}), \beta_{\sharp})=0$. 
Take $R, \epsilon\in (0, 1)$.
Let $n, m, N\in \zz_{\ge 0}$  be integers 
satisfying that 
$\beta(n+1)\le R<\beta(n)$, 
$2^{-m^{2}}\le \epsilon <2^{-(m-1)^{2}}$, 
 and 
$\beta(N+1)\le \epsilon R<\beta(N)$. 
Since $2^{-m^{2}}\beta(n+1)\le \beta(N)$, 
using the property  \ref{item:37333}  of $\{k_{i}\}_{i\in \zz_{\ge 0}}$, 
we obtain $N<m+n+2$. 
Lemma \ref{lem:numCantor} implies 
$\mnum_{\beta_{\sharp}}(B(x, R), \epsilon)=
2^{N-n}\le 2^{m+2}$ for all $x\in S(\mathbf{2})$. 
Then
$\log \Theta_{S(\mathbf{2}), \beta_{\sharp}}(\epsilon)
=
O(\sqrt{\log \epsilon^{-1} })\ (\epsilon\to 0)$. 
Hence
$\limsup_{\epsilon\to 0}
\eta_{S(\mathbf{2}), \beta_{\sharp}}(\epsilon)=0$. 
Therefore,  Proposition \ref{prop:adimform}
shows  that 
$\adim(S(\mathbf{2}), \beta_{\sharp})=0$. 
\end{proof}

\begin{lem}\label{lem:1111}
There exist 
$\mathbf{m}\in \numset$ and $\alpha\in \shrinkset$
such that the space  $(S(\mathbf{m}), \alpha_{\sharp})$ is 
of the dimensional type  $(1, 1, 1, 1)$. 
\end{lem}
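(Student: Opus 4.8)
The plan is to exhibit explicit data $\mathbf{m}\in\numset$ and $\alpha\in\shrinkset$ and then simply read off all the dimensions from the formulas already established in Propositions \ref{prop:boxdimCantor} and \ref{prop:hpCantor}, together with Lemma \ref{lem:upadim2} and Theorem \ref{thm:diminequalities}. Concretely, I would take $\mathbf{m}=\mathbf{2}$, the constant sequence with all entries equal to $2$, and $\alpha(n)=2^{-n}$. First I would check that $\alpha\in\shrinkset$: it is a strictly decreasing map $\zz_{\ge 0}\to(0,\infty)$ converging to $0$, so it is a shrinking sequence, and $(S(\mathbf{2}),\alpha_{\sharp})$ is a Cantor ultrametric space by definition.

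Next I would compute, for this choice, $m_{0}m_{1}\cdots m_{n}=2^{n+1}$, so that $\log(m_{0}\cdots m_{n})=(n+1)\log 2$, while $-\log\alpha(n+1)=(n+1)\log 2$ and $-\log\alpha(n)=n\log 2$. Plugging these into Propositions \ref{prop:hpCantor} and \ref{prop:boxdimCantor} gives
\[
\hdim(S(\mathbf{2}),\alpha_{\sharp})=\liminf_{n\to\infty}\frac{(n+1)\log 2}{(n+1)\log 2}=1
\]
and
\[
\pdim(S(\mathbf{2}),\alpha_{\sharp})=\ubdim(S(\mathbf{2}),\alpha_{\sharp})=\limsup_{n\to\infty}\frac{n+1}{n}=1 ,
\]
so the Hausdorff, packing, and upper box dimensions are all equal to $1$.

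It remains to pin down the Assouad dimension. For the upper bound, $\alpha(i+1)/\alpha(i)=2^{-1}$ for every $i\in\zz_{\ge 0}$, so Lemma \ref{lem:upadim2} yields $\adim(S(\mathbf{2}),\alpha_{\sharp})\le 1$. For the lower bound, Theorem \ref{thm:diminequalities} gives $\adim(S(\mathbf{2}),\alpha_{\sharp})\ge\ubdim(S(\mathbf{2}),\alpha_{\sharp})=1$. Hence all four dimensions equal $1$, i.e.\ $(S(\mathbf{2}),\alpha_{\sharp})$ is of dimensional type $(1,1,1,1)$, which is the assertion. I do not anticipate a real obstacle here: the only points needing a word of care are that the chosen $\alpha$ genuinely lies in $\shrinkset$ and that the ratio hypothesis of Lemma \ref{lem:upadim2} holds on the nose, and both are immediate for $\alpha(n)=2^{-n}$ (if one insists on $\alpha(0)<1$, replacing $\alpha(n)$ by $2^{-n-1}$ leaves every computation unchanged).
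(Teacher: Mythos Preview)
Your proposal is correct and follows essentially the same approach as the paper: the same choice $\mathbf{m}=\mathbf{2}$, $\alpha(n)=2^{-n}$ (the paper's ``$\alpha(n)=2^{-1}$'' is a typo), and the same appeals to Lemma~\ref{lem:upadim2}, Proposition~\ref{prop:hpCantor}, and Theorem~\ref{thm:diminequalities}. You simply spell out the limit computations that the paper leaves implicit.
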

\begin{proof}
We define $\alpha \in \shrinkset$ by 
$\alpha(n)=2^{-1}$. 
Then, 
combining  
Lemma \ref{lem:upadim2}, 
Proposition \ref{prop:hpCantor}, 
and  Theorem \ref{thm:diminequalities},  
we notice that the space $(S(\mathbf{2}), \alpha_{\sharp})$ is of the dimension type $(1, 1, 1, 1)$ (see also \cite[Lemma 8.8]{Ishiki2019}). 
\end{proof}

\begin{lem}\label{lem:iiii}
There exist 
$\mathbf{m}\in \numset$ and $\alpha\in \shrinkset$ 
such that the space  $(S(\mathbf{m}), \alpha_{\sharp})$ is 
of the dimensional type  $(\infty, \infty, \infty, \infty)$. 
\end{lem}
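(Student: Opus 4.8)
Lemma \ref{lem:iiii} asks for a Cantor ultrametric space of dimensional type $(\infty,\infty,\infty,\infty)$. Given the machinery already assembled, this should follow the same blueprint as Lemmas \ref{lem:1111} and \ref{lem:0000}: choose $\mathbf{m}$ and $\alpha$ so that the formulas of Propositions \ref{prop:boxdimCantor} and \ref{prop:hpCantor} all blow up, then invoke Theorem \ref{thm:diminequalities} to get the remaining inequalities for free. Let me sketch how I'd do it.
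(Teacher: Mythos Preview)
Your approach is exactly the paper's: it takes $\mathbf{m}=\mathbf{2}$ and $\alpha(n)=(n+1)^{-1}$, notes via Proposition~\ref{prop:hpCantor} that $\hdim(S(\mathbf{2}),\alpha_{\sharp})=\liminf_{n\to\infty}\frac{(n+1)\log 2}{\log(n+2)}=\infty$, and then Theorem~\ref{thm:diminequalities} forces the other three dimensions to be $\infty$ as well. Only the Hausdorff formula needs to be checked, so Proposition~\ref{prop:boxdimCantor} is not actually required.
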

\begin{proof}
We define $\alpha \in \shrinkset$ by 
$\alpha(n)=(n+1)^{-1}$. 
Then, by Proposition \ref{prop:hpCantor} and 
Theorem \ref{thm:diminequalities}, 
 the space $(S(\mathbf{2}), \alpha_{\sharp})$ is a desired one. 
\end{proof}

Before proving the next lemma, 
we introduce two asymptotic notations of 
functions. 
Let $f, g\colon \zz_{\ge 0}\to (0, \infty)$ be arbitrary maps. We write 
$f(i)\asymp g(i)$ if 
there exist $M_{0}, M_{1}\in (0, \infty)$ and 
$N\in \zz_{\ge 0}$ such that 
for all $i\in \zz_{\ge 0}$ with $N<i$, 
we have $M_{0}\cdot g(i)\le f(i)\le M_{1}\cdot g(i)$. 
We also write $f(i)=o(g(i))(i\to \infty)$ if 
$\lim_{i\to \infty} f(i)/g(i)=0$. 
\begin{lem}\label{lem:0111}
There exist 
$\mathbf{m}\in \numset$ and $\alpha\in \shrinkset$ 
such that the space  $(S(\mathbf{m}), \alpha_{\sharp})$ is 
of the dimensional type $(0, 1, 1, 1)$. 
\end{lem}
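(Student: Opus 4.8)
The plan is to take $\mathbf{m}=\mathbf{2}$, so that $m_{0}\cdots m_{n}=2^{n+1}$, and to build $\alpha$ of the explicit form
$\alpha(n)=2^{-(n+1)-c(n)}$, where $c(n)=\sum_{k:\,j_{k}\le n}c_{k}$ is a nondecreasing step function governed by two rapidly increasing sequences $\{j_{k}\}_{k\ge 1}\subset \zz_{\ge 1}$ and $\{c_{k}\}_{k\ge 1}\subset \zz_{\ge 1}$ to be chosen recursively. Since $\alpha(n+1)/\alpha(n)\le 2^{-1}$ for every $n\in\zz_{\ge 0}$, the sequence $\alpha$ is strictly decreasing and tends to $0$, so $\alpha\in\shrinkset$, and Lemma \ref{lem:upadim2} immediately yields $\adim(S(\mathbf{2}),\alpha_{\sharp})\le 1$.

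Next I would compute the remaining dimensions from the formulas already established. By Propositions \ref{prop:hpCantor} and \ref{prop:boxdimCantor}, and using $-\log\alpha(n)=\bigl((n+1)+c(n)\bigr)\log 2$,
\[
\hdim(S(\mathbf{2}),\alpha_{\sharp})=\liminf_{n\to\infty}\frac{n+1}{(n+2)+c(n+1)},\qquad
\pdim(S(\mathbf{2}),\alpha_{\sharp})=\ubdim(S(\mathbf{2}),\alpha_{\sharp})=\limsup_{n\to\infty}\frac{n+1}{(n+1)+c(n)}.
\]
Because the last ratio is always strictly below $1$, the packing and upper box dimensions are automatically at most $1$; to push them up to exactly $1$ it suffices that $c(n)/n\to 0$ along a subsequence, and to force $\hdim=0$ it suffices that $c(n+1)/n\to\infty$ along a (possibly different) subsequence. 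These two requirements can be met simultaneously by a recursion that, at stage $k$, first makes the spike $c_{k}$ large compared with its location $j_{k}$ (so the Hausdorff ratio at $n=j_{k}-1$ behaves like $j_{k}/c_{k}\to 0$), and then chooses the next location $j_{k+1}$ much larger than the accumulated exponent $\sum_{l\le k}c_{l}$ (so that the packing/box ratio at $n=j_{k+1}-1$, namely $j_{k+1}/(j_{k+1}+\sum_{l\le k}c_{l})$, tends to $1$). Concretely one may take $j_{1}=1$, $c_{k}=j_{k}^{2}$, and $j_{k+1}=\bigl(\sum_{l=1}^{k}c_{l}\bigr)^{2}+1$. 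Finally, Theorem \ref{thm:diminequalities} together with $\ubdim(S(\mathbf{2}),\alpha_{\sharp})=1$ and $\adim(S(\mathbf{2}),\alpha_{\sharp})\le 1$ forces $\adim(S(\mathbf{2}),\alpha_{\sharp})=1$, so $(S(\mathbf{2}),\alpha_{\sharp})$ is of dimensional type $(0,1,1,1)$.

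The step I expect to need the most care is the balancing of the two growth constraints on $\{j_{k}\}$ and $\{c_{k}\}$. Since $\alpha$ is monotone, a spike in the exponent cannot be cancelled out: each spike $c_{k}$ must dominate $j_{k}$ in order to drive the Hausdorff dimension to $0$, yet the only way to let the box and packing ratios climb back to $1$ afterwards is to let $n$ run far past the accumulated exponent before inserting the next spike, which is why $j_{k+1}$ must be the last and largest quantity fixed at each stage of the recursion. Verifying that the two displayed $\liminf$ and $\limsup$ indeed equal $0$ and $1$ with the stated choice of $\{j_{k}\},\{c_{k}\}$ is then a routine estimate along the subsequences $n=j_{k}-1$.
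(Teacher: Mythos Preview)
Your proof is correct and follows essentially the same strategy as the paper: take $\mathbf{m}=\mathbf{2}$, build $\alpha$ with $\alpha(n+1)/\alpha(n)\le 2^{-1}$ punctuated by occasional large drops, invoke Lemma~\ref{lem:upadim2} for $\adim\le 1$, and use Propositions~\ref{prop:boxdimCantor} and~\ref{prop:hpCantor} along two interlocking subsequences to force $\hdim=0$ and $\pdim=\ubdim=1$. The only differences are cosmetic---the paper writes $\alpha(n)=c_{0}\cdots c_{n}$ with $c_{n}=2^{-1}$ away from spike indices and supplies a tower-function example, whereas you parametrize the same object via the additive exponent $c(n)$ and give a polynomially defined recursion.
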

\begin{proof}
Take sequences 
 $\{k(i)\}_{i\in \zz_{\ge 0}}$ in $\zz_{\ge 1}$ and 
$\{c_{i}\}_{i\in \zz_{\ge 0}}$ in $(0, 1)$ with 
\begin{enumerate}
\item 
 for every $i\in \zz_{\ge 0}$,  we have 
 $k(i)+1<k(i+1)$;\label{item:kimono}
\item  
 for  every $i\in \zz_{\ge 0}$, we have  
$c_{i}\le 2^{-1}$; \label{item:inqci1/2}
\item
if $n\in \zz_{\ge 0}$ satisfies  $k(i)+2\le n\le k(i+1)$, 
we have $c_{n}=2^{-1}$;\label{item:eqci1/2}
\item 
the following equalities hold true:\label{item:end}
\begin{align*}
\text{(4.A)}\  \ & \lim_{i \to \infty}\frac{k(i)}
{-\log(c_{0}\cdots c_{k(i)+1})}=0;\\
\text{(4.B)}\  \ & \lim_{i\to \infty}\frac{k(i)}{k(i+1)-k(i)-1}=0;\\
\text{(4.C)}\ \ & \lim_{i\to \infty}
\frac{-\log(c_{0}\dots c_{k(i)+1})}{k(i+1)-k(i)-1}=0. 
 \end{align*}
\end{enumerate}

For example,  
we define $\{t(n)\}_{n\in \zz_{\ge 0}}$  by 
$t(0)=2$ and $t(n+1)=2^{t(n)}$. 
Using Knuth's up-arrow notation, 
we can represent  $t(n)=2\mathbin{\uparrow\uparrow}(n+1)$. 
We define  $k(i)=t(5i+5)$. 
We also define $\{c_{n}\}_{n\in \zz_{\ge 0}}$ by 
$c_{n}=1/t(5i+8)$ if $n=k(i)+1$ for some $i$; otherwise $c_{n}=2^{-1}$. 
Since
$k(i)\asymp t(5i+5)$, 
 $(k(i+1)-k(i)-1)\asymp t(5i+10)$, and 
$-\log(c_{0}\cdots c_{k(i)+1})\asymp t(5i+7)$, 
and since $t(n)=o(t(n+1)) \ (n\to \infty)$, 
the conditions 
(\ref{item:kimono})--(\ref{item:end}) are satisfied. 

Put $\mathbf{m}=\mathbf{2}$, i.e., $m_i=2$ for all $i\in \zz_{\ge 0}$. 
We define $\alpha\in \shrinkset$ by 
$\alpha(n)=c_{0}\cdots c_{n}$. 
The condition (4.A) implies   
\begin{align*}
\frac{\log(m_{0}\cdots m_{k(i)})}{-\log \alpha(k(i)+1)}=
\frac{(k(i)+1)\log 2}{-\log (c_{0}\cdots c_{k(i)+1})}\to 0\ 
 (\text{as $i\to \infty$}). 
\end{align*}
Thus
$\hdim(S(\mathbf{2}), \alpha_{\sharp})=0$. 
The conditions (\ref{item:eqci1/2}), (4.B) and (4.C) imply 
\begin{align*}
\frac{\log(m_{0}\cdots m_{k(i+1)})}{-\log \alpha(k(i+1))}&=
\frac{(k(i+1)+1)\log2}{-\log (c_{0}\cdots c_{k(i+1)})}\\
&=
\frac{(k(i+1)+1)\log2}{-\log (c_{0}\cdots c_{k(i)+1})+(k(i+1)-k(i)-1)\log 2}\\
&=
\frac{\frac{k(i)+2}{k(i+1)-k(i)-1}\log 2+\log 2}
{\frac{-\log (c_{0}\cdots c_{k(i)+1})}{k(i+1)-k(i)-1}+\log 2}
\to 1 \ (\text{as $i\to \infty$}).
\end{align*}
Hence
$1\le \pdim(S(\mathbf{2}), \alpha_{\sharp})$. 
By the condition (\ref{item:inqci1/2}), 
 and 
 Lemma \ref{lem:upadim2}, we have $\adim(S(\mathbf{2}), \alpha_{\sharp})\le 1$. 
Therefore the space  $(S(\mathbf{2}), \alpha_{\sharp})$ is of 
the dimensional type 
$(0, 1, 1, 1)$. 
\end{proof}

\begin{lem}\label{lem:0iii}
There exist 
$\mathbf{m}\in \numset$ and $\alpha\in \shrinkset$
such that the space  $(S(\mathbf{m}), \alpha_{\sharp})$ is 
of the dimensional type
$(0, \infty, \infty, \infty)$. 
\end{lem}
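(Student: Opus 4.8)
The plan is to mimic the construction in Lemma \ref{lem:0111}, but this time arrange the ``expensive'' blocks so that the packing (hence the upper box and Assouad) dimension is driven all the way to $\infty$ while the Hausdorff dimension is still forced to $0$. As in the previous proof I will take $\mathbf{m}=\mathbf{2}$ and build $\alpha\in \shrinkset$ of the form $\alpha(n)=c_{0}\cdots c_{n}$ for a suitable sequence $\{c_{n}\}_{n\in\zz_{\ge 0}}$ in $(0,1)$. Choose a rapidly increasing sequence $\{k(i)\}_{i\in\zz_{\ge 0}}$ in $\zz_{\ge 1}$. On each long interval $k(i)+2\le n\le k(i+1)$ I will let $c_{n}$ be \emph{extremely small} (much smaller than $2^{-1}$), say $c_{n}=2^{-\ell(i)}$ for an integer $\ell(i)\to\infty$; everywhere else (in particular at the single index $n=k(i)+1$) I will instead make $c_{n}$ close to $1$, i.e.\ put $c_{k(i)+1}$ equal to something like $2^{-1/w(i)}$ with $w(i)\to\infty$. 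The effect is the mirror image of Lemma \ref{lem:0111}: along the subsequence $n=k(i)$ the quotient $\log(m_{0}\cdots m_{k(i)})/(-\log\alpha(k(i)+1))=(k(i)+1)\log 2/(-\log(c_{0}\cdots c_{k(i)+1}))$ can be made to tend to $0$ because the contribution of $c_{k(i)+1}$ alone is negligible, while along the subsequence $n=k(i+1)$ the quotient $(k(i+1)+1)\log 2/(-\log(c_{0}\cdots c_{k(i+1)}))$ will be dominated by the very negative $-\log(c_{0}\cdots c_{k(i+1)})\approx (k(i+1)-k(i)-1)\ell(i)\log 2$, and by taking $\ell(i)$ growing slowly compared with the block length but still $\to\infty$ in a way that the ratio tends to $\infty$... wait — one must be careful about the direction. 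Since $\ell(i)$ appears in the \emph{denominator}, large $\ell(i)$ pushes this particular quotient toward $0$, not $\infty$.

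So the correct bookkeeping is the reverse: to make $\pdim=\infty$ I need the numerator $k(n)\log 2$ (i.e.\ the count of net points, roughly $2^{n}$) to dominate $-\log\alpha(n)$ along some subsequence; that is, I want $c_{n}$ to be \emph{close to $1$} on long stretches (so $-\log\alpha(n)$ grows slowly, only linearly with small slope) while the net cardinality still grows like $2^{n}$, and then occasionally insert a short burst where $c_{n}$ is tiny — this burst kills the Hausdorff dimension because at the scale just after the burst, $-\log\alpha$ has jumped hugely relative to the net count accumulated so far. Concretely: fix $k(i)$ increasing fast; on $k(i)+2\le n\le k(i+1)$ set $c_{n}=2^{-1/i}$ (close to $1$, slope $1/i\to 0$), and at $n=k(i)+1$ set $c_{k(i)+1}$ super-small, e.g.\ $c_{k(i)+1}=2^{-K(i)}$ with $K(i)\to\infty$ fast. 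Then:
\begin{align*}
\frac{\log(m_{0}\cdots m_{k(i)})}{-\log\alpha(k(i)+1)}
&=\frac{(k(i)+1)\log 2}{-\log(c_{0}\cdots c_{k(i)})+K(i)\log 2}
\le \frac{(k(i)+1)}{K(i)}\to 0,
\end{align*}
provided $K(i)/k(i)\to\infty$; hence $\hdim(S(\mathbf{2}),\alpha_{\sharp})=0$. And
\begin{align*}
\frac{\log(m_{0}\cdots m_{k(i+1)})}{-\log\alpha(k(i+1))}
&=\frac{(k(i+1)+1)\log 2}{\bigl(\sum_{j\le k(i)}-\log c_{j}\bigr)+(k(i+1)-k(i)-1)\tfrac{1}{i}\log 2},
\end{align*}
and by choosing the earlier-block contributions $\sum_{j\le k(i)}(-\log c_{j})$ to be small relative to $k(i+1)$ (possible since $k(i+1)$ is as large as we like), the right side behaves like $i\cdot\bigl(1+o(1)\bigr)\to\infty$; hence $\pdim(S(\mathbf{2}),\alpha_{\sharp})=\infty$. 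By Theorem \ref{thm:diminequalities} this forces $\ubdim=\adim=\infty$ as well, so the space is of dimensional type $(0,\infty,\infty,\infty)$.

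The main obstacle is the simultaneous bookkeeping: the three ``error terms'' — the cumulative weight $\sum_{j\le k(i)}(-\log c_j)$ of all previous blocks, the size $K(i)$ of the bursts, and the slope $1/i$ on the long blocks — must be interleaved consistently so that (a) along $n=k(i)+1$ the burst dominates ($K(i)\gg k(i)$ and $K(i)\gg$ previous weights, giving $\hdim=0$), and (b) along $n=k(i+1)$ the long block dominates and the previous weights (including the bursts $K(0),\dots,K(i)$!) are negligible compared with $k(i+1)$, which forces $k(i+1)$ to be chosen \emph{after} $K(i)$, growing much faster. As in Lemma \ref{lem:0111} I expect an explicit tower-type choice (e.g.\ Knuth up-arrow) of $k(i)$, $K(i)$ to make all these limits transparent; verifying each limit is then a routine but delicate estimate, not a conceptual difficulty.
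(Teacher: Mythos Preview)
Your approach is correct but takes a genuinely different route from the paper's.  The paper does \emph{not} fix $\mathbf{m}=\mathbf{2}$; instead it lets both $m_i$ and $c_i$ vary, interleaving them with a single fast-growing function $g(i)=2^{i!}$ by setting $m_i=2^{g(2i+2)}$ and $c_i=2^{-g(2i+1)}$.  With this choice the last term dominates each sum, so
\[
\frac{\log(m_0\cdots m_n)}{-\log(c_0\cdots c_{n+1})}\asymp\frac{g(2n+2)}{g(2n+3)}\to 0,
\qquad
\frac{\log(m_0\cdots m_n)}{-\log(c_0\cdots c_n)}\asymp\frac{g(2n+2)}{g(2n+1)}\to\infty,
\]
and both limits (not merely subsequential limits) are obtained with no bookkeeping at all.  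Your construction, by contrast, keeps $\mathbf{m}=\mathbf{2}$ and compensates by building $\alpha$ in alternating ``long flat'' blocks ($c_n$ close to $1$) and single ``bursts'' ($c_{k(i)+1}$ very small), in the style of Lemma~\ref{lem:0111}.  The recursive requirements $k(i+1)\gg K(i)\gg k(i)$ (with $K(i)$ also dominating the accumulated weight $W_i$) are indeed consistent and can be realised by a tower-type sequence, so the argument goes through.  What the paper's approach buys is brevity and a transparent one-line verification; what yours buys is the observation that even with $\mathbf{m}=\mathbf{2}$ one can achieve dimensional type $(0,\infty,\infty,\infty)$, which is a slightly stronger statement than the lemma asks for.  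Two small points to clean up if you write this out: the slope $2^{-1/i}$ is undefined at $i=0$, so start the block indexing at $i=1$ (or use $2^{-1/(i+1)}$); and you should state explicitly that $\sum_n(-\log c_n)=\infty$, so that $\alpha\in\shrinkset$ really converges to $0$.
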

\begin{proof}
Take  $\mathbf{m}=\{m_{i}\}_{i\in \zz_{\ge 0}}\in \numset$ and a sequence  
$\{c_{i}\}_{i\in \zz_{\ge 0}}$ in $(0, 1)$
satisfying that 
 \begin{align}
 &\lim_{n\to \infty}\frac{\log(m_{0}\cdots m_{n})}{-\log (c_{0}\cdots c_{n+1})}=0,\label{align:n/n+1=0}\\
&\lim_{n\to \infty}\frac{\log(m_{0}\cdots m_{n})}{-\log (c_{0}\cdots c_{n})}=\infty.\label{align:n/n=infty} 
\end{align}

For example, 
if 
we put $g(n)=2^{n!}$, and 
we define $m_{i}=2^{g(2i+2)}$ and 
$c_{i}=2^{-g(2i+1)}$, 
then the conditions 
(\ref{align:n/n+1=0}) and (\ref{align:n/n=infty}) are satisfied.

We define $\alpha\in \shrinkset$ by 
$\alpha(n)=c_{0}\cdots c_{n}$. 
Then, by Proposition \ref{prop:hpCantor},  
the space $(S(\mathbf{m}), \alpha_{\sharp})$ is 
of the dimensional type 
$(0, \infty, \infty, \infty)$. 
\end{proof}

We next construct a Cantor ultrametric space of 
the dimensional type $(0, 0, 1, 1)$. 
As noted in Remark \ref{rmk:equal}, 
the packing dimension and the upper box dimension of 
the  $(\mathbf{m}, \alpha)$-Cantor ultrametric space 
coincide with each other. 
Thus, 
 a metric space  of 
the dimensional type $(0, 0, 1, 1)$ is
not the $(\mathbf{m}, \alpha)$-Cantor ultrametric space for any $\mathbf{m}\in \numset$
and any $\alpha\in \shrinkset$. 
We realize it as a subspace of 
the 
 $(\mathbf{m}, \alpha)$-Cantor ultrametric space using 
the following theorem  due to  Mi\v{s}\'{i}k--\v{Z}\'{a}\v{c}ik  \cite[Theorem 4]{mivsik1990some}. 
\begin{thm}\label{thm:mz}
Let $(X, d)$ be an infinite compact metric space. 
Then for every   $w\in [0, \ubdim(X, d)]$, 
there exist a convergent 
sequence $\{a_{i}\}_{i\in \zz_{\ge 0}}$ 
with its limit 
$l\in X$ 
such that  
$\ubdim(\{l\}\cup \{\, a_{i}\mid i\in \zz_{\ge 0}\, \}, d)=w$. 
\end{thm}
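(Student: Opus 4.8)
The goal is to realize, inside a prescribed infinite compact metric space $(X,d)$, a countable compact set of the form $\{l\}\cup\{a_i\mid i\in\zz_{\ge 0}\}$ whose upper box dimension equals any prescribed value $v\in[0,\ubdim(X,d)]$. The plan is to exploit the definition $\ubdim(X,d)=\limsup_{r\to 0}\log\mnum_{d}(X,r)/(-\log r)$ directly: along a suitable sequence $r_k\to 0$, the ambient space $X$ admits many $r_k$-separated points, and by sampling an appropriate number of them at each scale we can tune the growth rate of the covering numbers of the sampled set to hit $v$ exactly.

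First I would fix a sequence $r_k\downarrow 0$ (say $r_k=2^{-k}$ or a subsequence along which the $\limsup$ defining $\ubdim(X,d)$ is attained, possibly both combined) and, for each $k$, choose a maximal $r_k$-separated subset $E_k$ of $X$; then $\card(E_k)=\mnum_{d}(X,r_k)$ up to the usual factor, and $\log\card(E_k)/(-\log r_k)$ has $\limsup$ equal to $\ubdim(X,d)$. The idea is to build the set $A=\{l\}\cup\{a_i\}$ as a nested union of finite $r_k$-separated configurations: at stage $k$ pick a subset $F_k\subseteq E_k$ of cardinality $N_k$, where $N_k$ is chosen so that $\log N_k/(-\log r_k)\to v$ (this is possible because $N_k$ can range between $1$ and $\card(E_k)$, and $\ubdim(X,d)\ge v$), while simultaneously ensuring the $F_k$ accumulate to a single point $l$. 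To force accumulation to one point, I would work inside a shrinking sequence of balls: choose $l\in X$, and at each stage place $F_k$ inside the ball $B(l,\rho_k)$ with $\rho_k\to 0$ chosen much larger than $r_k$ but with $\rho_k\to 0$; since $X$ is infinite and compact it has a non-isolated point, so we may take $l$ to be such a point and find arbitrarily many points near it — but we need the covering-number growth, so more care is needed: we should pick $l$ so that $\ubdim$ is "witnessed locally" near $l$, or alternatively pass to a ball on which the local upper box dimension still dominates $v$.

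Then the verification that $\ubdim(A,d)=v$ splits into two inequalities. For the upper bound $\ubdim(A,d)\le v$: given $r>0$, locate $k$ with $r_{k+1}\le r<r_k$; the tail $\bigcup_{j>k}F_j\cup\{l\}$ lies in a small ball and is covered by one (or boundedly many) $r$-balls once $\rho_{j}<r$, while the finitely many points in $F_1\cup\cdots\cup F_k$ contribute at most $\sum_{j\le k}N_j$ to the covering number; choosing the scales $r_k$ to decrease fast enough (so that $\sum_{j\le k}N_j\le N_k^{1+o(1)}$ and $-\log r\approx -\log r_k$) gives $\mnum_{d}(A,r)\le N_k^{1+o(1)}$, hence $\limsup\le v$. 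For the lower bound $\ubdim(A,d)\ge v$: at scale $r$ slightly below $r_k$, the set $F_k$ is $r_k$-separated, so it needs at least $N_k$ balls of radius $r<r_k/2$ (say), giving $\mnum_{d}(A,r)\ge N_k$ and $\limsup\ge\lim\log N_k/(-\log r_k)=v$. The main obstacle, and where the argument needs the most care, is the bookkeeping on the separation of scales: one must simultaneously guarantee (i) $\log N_k/(-\log r_k)\to v$, (ii) the $F_k$ fit inside balls shrinking to $l$, (iii) the scales $r_k$ decay fast enough that the accumulated "older" points $F_1\cup\cdots\cup F_{k-1}$ are negligible in the covering-number exponent at scale $\sim r_k$, and (iv) that $A$ is genuinely of the claimed form (a convergent sequence together with its limit), which requires $F_k\to\{l\}$ in the Hausdorff sense. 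All four can be arranged by choosing $r_k$ recursively after $N_{k-1}$ and $\rho_{k-1}$ are fixed; I would also invoke Proposition~\ref{prop:propdim}(2) to discard the finitely many isolated stages when convenient, and note that for the edge case $v=0$ one simply takes $N_k=1$, recovering a plain convergent sequence, while for $v=\ubdim(X,d)$ one takes $N_k=\card(E_k)$ along the witnessing subsequence.
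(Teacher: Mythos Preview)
The paper does not prove Theorem~\ref{thm:mz}; it simply attributes the result to Mi\v{s}\'{i}k--\v{Z}\'{a}\v{c}ik \cite[Theorem 4]{mivsik1990some} and uses it as a black box. There is therefore no proof in the paper to compare your attempt against.

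Your sketch is along the right lines and captures the standard strategy for such statements. The one step you flag but do not justify --- the existence of a point $l$ at which the upper box dimension is ``locally witnessed'', i.e.\ $\inf_{r>0}\ubdim(B(l,r),d)\ge v$ --- is genuinely needed and follows from compactness together with the finite stability in Proposition~\ref{prop:propdim}(\ref{item:finitestab}): if every $x\in X$ had a ball $B(x,r_x)$ with $\ubdim(B(x,r_x),d)<v$, a finite subcover would force $\ubdim(X,d)<v$. (For $v>0$ such an $l$ is automatically non-isolated; for $v=0$ any accumulation point of the infinite compact $X$ works.) Once $l$ is fixed, your recursive choice of $\rho_k$, $r_k$, $N_k$ is sound: at each stage $\ubdim(B(l,\rho_k),d)\ge v$ guarantees a scale $r_k$ as small as you like at which $B(l,\rho_k)$ contains an $r_k$-separated set of size at least $r_k^{-v+o(1)}$, so $N_k=\lfloor r_k^{-v}\rfloor\wedge\card(E_k)$ is available; and since $r_k$ may be taken arbitrarily small after $N_{k-1}$ is fixed, the separation-of-scales conditions (iii) and (iv) can be enforced. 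The two bounds on $\ubdim(A,d)$ then go through as you describe.
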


\begin{lem}\label{lem:0011}
There exists
a Cantor ultrametric space  
of the dimensional type $(0, 0, 1, 1)$. 
\end{lem}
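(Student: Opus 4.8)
The plan is to obtain the required space as a Cantor subset of a space of dimensional type $(1,1,1,1)$, produced by ``fattening'' a well-chosen convergent sequence. Set $(X,d):=(S(\mathbf{2}),\alpha_{\sharp})$ with $\alpha(n)=2^{-n-1}$. Then $\alpha\in\shrinkset$ and $\alpha(i+1)/\alpha(i)=2^{-1}$ for all $i$, so Lemma \ref{lem:upadim2} gives $\adim(X,d)\le 1$, while Proposition \ref{prop:boxdimCantor} gives $\ubdim(X,d)=1$; in particular $(X,d)$ has dimensional type $(1,1,1,1)$ (cf.\ Lemma \ref{lem:1111}). Since $(X,d)$ is an infinite compact metric space with $\ubdim(X,d)=1$, Theorem \ref{thm:mz} applied with $v=1$ yields a sequence $\{a_{i}\}_{i\in\zz_{\ge 0}}$ in $X$ converging to some $l\in X$ with $\ubdim(\{l\}\cup\{a_{i}\mid i\in\zz_{\ge 0}\},d)=1$. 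After deleting repetitions and any term equal to $l$ (still leaving infinitely many, as a finite set has upper box dimension $0$), I may assume the $a_{i}$ are pairwise distinct and $a_{i}\neq l$.

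Next I would shrink each $a_{i}$ to a zero-dimensional Cantor set. Put $\rho_{i}:=\min\bigl(d(a_{i},l),\ \inf_{j\neq i}d(a_{i},a_{j})\bigr)$; this is positive, since $d(a_{i},a_{j})\to d(a_{i},l)>0$ as $j\to\infty$, so $\{d(a_{i},a_{j})\mid j\neq i\}$ is bounded away from $0$. Choose $k_{i}\in\zz_{\ge 0}$ with $\alpha(k_{i})<\rho_{i}$ and $k_{i}\to\infty$, and set $\epsilon_{i}:=\alpha(k_{i})$, so $\epsilon_{i}\to 0$. By Lemma \ref{lem:shiftedCantor} the closed ball $B(a_{i},\epsilon_{i})$ in $(X,d)$ is isometric to $(S(\mathbf{2}),(\alpha^{\{k_{i}\}})_{\sharp})$, so by Lemma \ref{lem:0000} it contains a Cantor subspace $C_{i}$ of dimensional type $(0,0,0,0)$. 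Define
\[
T:=\{l\}\cup\bigcup_{i\in\zz_{\ge 0}}C_{i}\subseteq X .
\]
I claim $(T,d)$ is a Cantor ultrametric space of dimensional type $(0,0,1,1)$.

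Being a subspace of $X$, $(T,d)$ is ultrametric. It is compact and perfect: any sequence in $T$ either has a subsequence lying in a single $C_{i}$ (which is compact) or has terms in infinitely many $C_{i}$, and those converge to $l$ because $\di(C_{i})\le 2\epsilon_{i}\to 0$ and $d(C_{i},l)=d(a_{i},l)\to 0$; moreover no point of $T$ is isolated, since each $C_{i}$ is perfect and $l$ is a limit of the sets $C_{i}$. Thus $T$ is homeomorphic to the Cantor set. By Proposition \ref{prop:propdim}\,(1), $\adim(T,d)\le\adim(X,d)\le 1$; and applying Proposition \ref{prop:propdim}\,(4) to the countable cover $T=\{l\}\cup\bigcup_{i}C_{i}$ (using $\hdim(\{l\},d)=\pdim(\{l\},d)=0$ and $\hdim(C_{i},d)=\pdim(C_{i},d)=0$) gives $\hdim(T,d)=\pdim(T,d)=0$.

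Finally, to see $\ubdim(T,d)=1$ (which then forces $\adim(T,d)=1$ by Theorem \ref{thm:diminequalities}), pick any $c_{i}\in C_{i}$. Since $d(c_{i},a_{i})\le\epsilon_{i}<\rho_{i}\le d(a_{i},l)$, the strong triangle inequality forces $d(c_{i},l)=d(a_{i},l)$, and similarly $d(c_{i},c_{j})=d(a_{i},a_{j})$ for $i\neq j$. Hence $l\mapsto l$, $a_{i}\mapsto c_{i}$ is an isometry of $\{l\}\cup\{a_{i}\mid i\}$ onto the subset $\{l\}\cup\{c_{i}\mid i\}$ of $T$, so $\ubdim(\{l\}\cup\{c_{i}\mid i\},d)=1$, and monotonicity (Proposition \ref{prop:propdim}\,(1)) yields $\ubdim(T,d)\ge 1$; together with $\ubdim(T,d)\le\adim(T,d)\le 1$ this gives $\ubdim(T,d)=1$. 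The delicate points are that fattening the sequence must neither raise the packing dimension (secured by countable stability, since each $C_{i}$ is $0$-dimensional) nor destroy $\ubdim=1$ (secured by the isometry above, which is why the $\epsilon_{i}$ are taken below every relevant gap $\rho_{i}$); I expect the bookkeeping with the constants $\rho_{i}$ and $\epsilon_{i}$ to be the only genuine fuss.
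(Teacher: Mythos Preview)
Your argument is correct and follows essentially the same route as the paper's proof: start from a $(1,1,1,1)$-type space $(S(\mathbf{2}),\alpha_{\sharp})$, use Theorem~\ref{thm:mz} to extract a convergent sequence with $\ubdim=1$, fatten each term to a Cantor set of type $(0,0,0,0)$ inside a small ball, and invoke countable stability for $\hdim,\pdim$ and monotonicity for $\ubdim,\adim$. Your treatment is in fact more explicit than the paper's on the point $\ubdim(T,d)\ge 1$, where you use the isosceles property of ultrametrics to exhibit an isometric copy of $\{l\}\cup\{a_i\}$ inside $T$; the paper leaves this step implicit.
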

\begin{proof}
We define $\alpha\in \shrinkset$ by 
$\alpha(i)=2^{-i}$. From  Lemma \ref{lem:1111}, 
it follows that the space $(S(\mathbf{2}), \alpha_{\sharp})$  is of the dimensional type $(1, 1, 1, 1)$. 
Theorem \ref{thm:mz} implies that there exists a convergent  sequence $\{a_{i}\}_{i\in \zz_{\ge 0}}$ in $(S(\mathbf{2}), \alpha_{\sharp})$
with its limit  $l\in S(\mathbf{2})$ such that 
$\ubdim(\{l\}\cup\{\, a_{i}\mid i\in\zz_{\ge 0}\, \}, \alpha_{\sharp})
=\ubdim(S(\mathbf{2}), \alpha_{\sharp})=1$. 
For each $n\in \zz_{\ge 0}$, 
let  $r_{n}$ be the distance between  $\{a_{n}\}$ and 
$\{l\}\cup\{\, a_{i}\mid i\in\zz_{\ge 0}, i\neq n\, \}$. 
Note that $r_{n}>0$ and $r_{n}\to 0$ as $n\to \infty$. 
For each $i\in \zz_{\ge 0}$, 
let $K_{i}$ be a Cantor subspace of $B(a_{i}, r_{i}/3)$ of 
the dimensional type $(0, 0, 0, 0)$ (see Lemmas 
\ref{lem:shiftedCantor} and \ref{lem:0000}). 
We define 
$K=\{l\}\cup\bigcup_{i\in \zz_{\ge 0}}K_{i}$. 
Then,
the space 
$K$ is a Cantor space
(see \cite[Theorem 30.3]{W1970}). 
Using  the statement (\ref{item:countablestab}) in Proposition \ref{prop:propdim}, we have 
$\hdim(K, \alpha_{\sharp})=\pdim(K, \alpha_{\sharp})=0$.  
Due to 
the statement (\ref{item:monotone}) in the Proposition \ref{prop:propdim},  we find that 
$\ubdim(K, \alpha_{\sharp})=\adim(K, \alpha_{\sharp})=1$. 
Therefore the space
$(K, \alpha_{\sharp})$ is a Cantor ultrametric space  of the dimensional type 
$(0, 0, 1, 1)$. 
\end{proof}

\begin{lem}\label{lem:00ii}
There exists
a Cantor ultrametric space  
of the dimensional type  $(0, 0, \infty, \infty)$. 
\end{lem}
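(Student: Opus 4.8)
The plan is to mimic the construction in Lemma \ref{lem:0011}, replacing the ``type $(1,1,1,1)$'' scaffold space by a ``type $(\infty,\infty,\infty,\infty)$'' one, and then deflating the Hausdorff and packing dimensions to $0$ by inserting tiny Cantor pieces of type $(0,0,0,0)$ around a carefully chosen convergent sequence, while keeping the upper box and Assouad dimensions infinite. First I would invoke Lemma \ref{lem:iiii} to obtain $\mathbf{m}\in\numset$ and $\alpha\in\shrinkset$ with $(S(\mathbf{m}),\alpha_{\sharp})$ of dimensional type $(\infty,\infty,\infty,\infty)$; in particular $\ubdim(S(\mathbf{m}),\alpha_{\sharp})=\infty$. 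Then I would apply Theorem \ref{thm:mz} with $v$ taken larger and larger (or directly with $v=\infty$ in the sense that $[0,\ubdim]=[0,\infty]$) to get a convergent sequence $\{a_{i}\}_{i\in\zz_{\ge 0}}$ with limit $l\in S(\mathbf{m})$ such that $\ubdim(\{l\}\cup\{a_{i}\mid i\in\zz_{\ge 0}\},\alpha_{\sharp})=\infty$.

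Next, exactly as in Lemma \ref{lem:0011}, for each $n$ let $r_{n}>0$ be the distance from $\{a_{n}\}$ to $\{l\}\cup\{a_{i}\mid i\neq n\}$; then $r_{n}\to 0$. Using Lemmas \ref{lem:shiftedCantor} and \ref{lem:0000}, inside each ball $B(a_{i},r_{i}/3)$ pick a Cantor subspace $K_{i}$ of dimensional type $(0,0,0,0)$, and set $K=\{l\}\cup\bigcup_{i\in\zz_{\ge 0}}K_{i}$. This is a Cantor space (countably many shrinking Cantor clusters accumulating at a point, in an ultrametric, so it is compact, perfect, zero-dimensional, metrizable). By the countable stability statement (\ref{item:countablestab}) in Proposition \ref{prop:propdim}, $\hdim(K,\alpha_{\sharp})=\pdim(K,\alpha_{\sharp})=0$ since each $K_{i}$ and the singleton $\{l\}$ contribute $0$. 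By the monotonicity statement (\ref{item:monotone}) in Proposition \ref{prop:propdim}, together with Theorem \ref{thm:diminequalities}, the inclusion $\{l\}\cup\{a_{i}\mid i\in\zz_{\ge 0}\}\subset K$ forces $\ubdim(K,\alpha_{\sharp})=\adim(K,\alpha_{\sharp})=\infty$. Hence $(K,\alpha_{\sharp})$ is a Cantor ultrametric space of dimensional type $(0,0,\infty,\infty)$, as desired.

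The only point requiring care is the application of Theorem \ref{thm:mz} in the case $\ubdim(X,d)=\infty$: as literally stated it produces a countable compact set of prescribed \emph{finite} upper box dimension $v$, so to realize value $\infty$ I would instead, for each $k\in\zz_{\ge 1}$, extract (within disjoint balls) a convergent subsequence whose limit-point-closure has upper box dimension $\ge k$, and take the union of these together with a single common limit point; finite stability of $\ubdim$ and monotonicity then give $\ubdim=\infty$ on the union while it remains countable compact, hence of Hausdorff and packing dimension $0$. I expect this bookkeeping — arranging the countably many ``$\ubdim\ge k$'' pieces to sit in pairwise disjoint balls that still accumulate at one point, so that the resulting set embeds in $(S(\mathbf{m}),\alpha_{\sharp})$ — to be the main (though routine) obstacle; everything else is a verbatim repetition of the argument in Lemma \ref{lem:0011}.
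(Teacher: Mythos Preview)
Your proposal is correct and follows exactly the same route as the paper: the paper's proof literally says ``replace $(S(\mathbf{2}),\alpha_{\sharp})$ by $(S(\mathbf{2}),\beta_{\sharp})$ [the type $(\infty,\infty,\infty,\infty)$ space from Lemma~\ref{lem:iiii}] in the proof of Lemma~\ref{lem:0011}''. Your extra paragraph safeguarding the application of Theorem~\ref{thm:mz} at $v=\infty$ (by assembling countably many pieces with $\ubdim\ge k$) is a reasonable precaution that the paper simply glosses over.
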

\begin{proof}
We define $\beta\in \shrinkset$ by 
$\beta(i)=(i+1)^{-1}$.  
Then $(S(\mathbf{2}), \beta_{\sharp})$ is of the dimensional type $(\infty, \infty, \infty, \infty)$ (see Lemma \ref{lem:iiii}). 
By replacing the role of $(S(\mathbf{2}), \alpha_{\sharp})$ with 
that of 
$(S(\mathbf{2}), \beta_{\sharp})$  in the proof of 
Lemme \ref{lem:0011}, 
we obtain Lemma \ref{lem:00ii}. 
\end{proof}
\begin{lem}\label{lem:0001}
There exist 
$\mathbf{m}\in \numset$ and $\alpha\in \shrinkset$ 
such that the space  $(S(\mathbf{m}), \alpha_{\sharp})$ is 
of the dimensional type  $(0, 0, 0, 1)$. 
\end{lem}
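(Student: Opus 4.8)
The plan is to realize the desired space as an $(\mathbf{m},\alpha)$-Cantor ultrametric space with $\mathbf{m}=\mathbf{2}$. I would write $\alpha(n)=2^{-a_{n}}$, where $(a_{n})_{n\in\zz_{\ge 0}}$ is a strictly increasing sequence of positive integers with $a_{n}\to\infty$; then $\alpha\in\shrinkset$, and $(S(\mathbf{2}),\alpha_{\sharp})$ is a Cantor ultrametric space. By Propositions \ref{prop:boxdimCantor} and \ref{prop:hpCantor} we have $\ubdim(S(\mathbf{2}),\alpha_{\sharp})=\pdim(S(\mathbf{2}),\alpha_{\sharp})=\limsup_{n\to\infty}(n+1)/a_{n}$, so the first requirement on $(a_{n})$ is $a_{n}/n\to\infty$; by Theorem \ref{thm:diminequalities} this then forces $\hdim(S(\mathbf{2}),\alpha_{\sharp})=\pdim(S(\mathbf{2}),\alpha_{\sharp})=\ubdim(S(\mathbf{2}),\alpha_{\sharp})=0$. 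The only remaining point is to arrange $\adim(S(\mathbf{2}),\alpha_{\sharp})=1$, which is achieved by a careful choice of $(a_{n})$.

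For the estimate $\adim(S(\mathbf{2}),\alpha_{\sharp})\le 1$ I would argue from the definition, using Lemma \ref{lem:numCantor}. Given $x\in S(\mathbf{2})$ and $R>r>0$ with $R<\alpha(0)$, pick $n,m\in\zz_{\ge 0}$ with $\alpha(n+1)\le R<\alpha(n)$ and $\alpha(n+m+1)\le r<\alpha(n+m)$; then $\mnum_{\alpha_{\sharp}}(B(x,R),r)$ is $1$ if $m=0$ and $2^{m}$ otherwise, while $R/r>\alpha(n+1)/\alpha(n+m)=2^{a_{n+m}-a_{n+1}}\ge 2^{m-1}$ because $(a_{n})$ is strictly increasing among integers. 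Hence $\mnum_{\alpha_{\sharp}}(B(x,R),r)<2(R/r)$, and the case $R\ge\alpha(0)$ (where $B(x,R)=S(\mathbf{2})$) is handled by the same estimate via $R/r>\alpha(0)/\alpha(n')=2^{a_{n'}-a_{0}}\ge 2^{n'}$. So the exponent $1$ with constant $2$ is admissible, giving $\adim(S(\mathbf{2}),\alpha_{\sharp})\le 1$. For the reverse inequality I need $(a_{n})$ to contain runs of consecutive integers of unbounded length: if $a_{n+1},\dots,a_{n+L}$ are consecutive, then for each $m$ with $1\le m\le L-1$ the choice $R=\alpha(n+1)$, $r=\alpha(n+m+1)$ gives $R/r=2^{m}$ and $\mnum_{\alpha_{\sharp}}(B(x,R),r)=2^{m}=R/r$ by Lemma \ref{lem:numCantor}; since $m$ can be made arbitrarily large, for every $\lambda\in(0,1)$ and every $C\in(0,\infty)$ the bound $\mnum_{\alpha_{\sharp}}(B(x,R),r)\le C(R/r)^{\lambda}$ fails for a suitable triple $(x,R,r)$, whence $\adim(S(\mathbf{2}),\alpha_{\sharp})\ge 1$. (Equivalently: $\Theta_{S(\mathbf{2}),\alpha_{\sharp}}(\epsilon)\le 2\epsilon^{-1}$ for all $\epsilon\in(0,1)$, while $\Theta_{S(\mathbf{2}),\alpha_{\sharp}}(2^{-m})\ge 2^{m}$ for all $m$, so $\limsup_{\epsilon\to 0}\eta_{S(\mathbf{2}),\alpha_{\sharp}}(\epsilon)=1$ and Proposition \ref{prop:adimform} gives $\adim(S(\mathbf{2}),\alpha_{\sharp})=1$.)

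It therefore remains to construct a strictly increasing sequence $(a_{n})$ of positive integers having simultaneously $a_{n}/n\to\infty$ and runs of consecutive values of unbounded length. I would do this block by block: set $a_{0}=1$, $N_{0}=0$, and in block $k$ $(k\ge 1)$ let the sequence take one jump of size $k\,N_{k}$, where $N_{k}=N_{k-1}+k+1$ is the index reached at the end of block $k$, followed by $k$ unit steps. Then $a_{N_{k}}\ge k\,N_{k}$, so for every index $n$ lying in block $k$ one has $a_{n}\ge k\,N_{k}\ge k\,n$, whence $a_{n}/n\to\infty$; and the unit-step runs have length $k\to\infty$, as required. (A tower-type choice in the spirit of the proof of Lemma \ref{lem:0111} also works.) Assembling these facts, $(S(\mathbf{2}),\alpha_{\sharp})$ is a Cantor ultrametric space of dimensional type $(0,0,0,1)$. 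The main obstacle is exactly this balancing act: the consecutive-value runs that push $\adim$ up to $1$ keep $a_{n}$ growing only linearly for as long as they last, so the intervening jumps must be chosen large enough, and in the correct order, to still drive $a_{n}/n$ to infinity and hence keep $\ubdim=0$.
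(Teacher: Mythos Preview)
Your proof is correct and follows essentially the same strategy as the paper: both take $\mathbf{m}=\mathbf{2}$ and build a shrinking sequence whose successive ratios are always at most $1/2$ (giving $\adim\le 1$, which the paper obtains via Lemma~\ref{lem:upadim2}), which contains arbitrarily long stretches where the ratio is exactly $1/2$ (giving $\adim\ge 1$), and whose exponents grow superlinearly (giving $\ubdim=0$). The paper's concrete choice $\theta$ with value set $\{2^{-n^{3}}\}\cup\{2^{-n^{3}-k}:1\le k\le n\}$ is, in your exponent notation, precisely a sequence $(a_{n})$ of this ``big jump followed by a growing run of consecutive integers'' type; your block-by-block construction is a different instance of the same scheme, and your presentation has the minor advantage of isolating the two abstract conditions on $(a_{n})$ before exhibiting an example.
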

\begin{proof}
The construction in this proof  has already appeared in 
\cite[Proposition 8.7]{Ishiki2019}. 
We define  $\alpha\in \shrinkset$ by 
$\alpha (n)=2^{-n^{3}}$. 
Let $\theta\in \shrinkset$ be the  shrinking sequence such that  
\[
\{\, \theta(n)\mid n\in \zz_{\ge 0}\, \}=
\alpha(\zz_{\ge 0})\cup \{\,2^{-k}\alpha(n)\mid n\in \zz_{\ge 0}, k=1,\dots, n\,\}. 
\] 
Let  $\varphi \colon \zz_{\ge 0}\to \zz_{\ge 0}$ be a map 
satisfying  that 
 $\theta(\varphi(n))=\alpha(n)=2^{-n^3}$ for all 
 $n\in \zz_{\ge 0}$. 
Let  $\psi\colon \zz_{\ge 0}\to \zz_{\ge 0}$ be a map satisfying  that 
$\varphi(\psi(n))\le n< \varphi(\psi(n)+1)$ for all 
$n\in \zz_{\ge 0}$. 
 Then, $\varphi(n)\asymp n^{2}$, and hence 
 $\psi(n)\asymp \sqrt{n}$. 
Since 
$\alpha(\psi(n))\le \theta(n)< \alpha(\psi(n)+1)$ for all $n\in \zz_{\ge 0}$,  
we have $-\log \theta(n) \asymp n^{3/2}$. 
Using  Lemma \ref{lem:numCantor}, 
we find
$\mnum_{\theta_{\sharp}}(S(\mathbf{2}), \theta(n))
=2^{n}$. 
Thus $\ubdim(S(\mathbf{2}), \theta_{\sharp})=0$. 
Next we estimate the Assouad dimension. 
Due to  Lemma \ref{lem:upadim2}, we obtain 
$\adim(S(\mathbf{2}), \theta_{\sharp})\le 1$. 
By the definition of $\theta$, 
for all $x\in S(\mathbf{2})$ and $n\in \zz_{\ge 0}$, 
we have 
$\mnum_{\theta_{\sharp}}
(B(x, 2^{-n^{3}}), 2^{-n}\cdot 2^{-n^{3}})
=2^{n+1}$, 
thus
$1\le \limsup_{\epsilon \to 0}\eta_{S(\mathbf{2}), \theta_{\sharp}}(\epsilon)$. 
Hence $\adim(S(\mathbf{2}), \theta_{\sharp})=1$. 
\end{proof}

\begin{lem}\label{lem:000i}
There exists a Cantor ultrametric space 
of $(0, 0, 0, \infty)$. 
\end{lem}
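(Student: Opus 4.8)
The goal is to construct a Cantor ultrametric space of dimensional type $(0,0,0,\infty)$, meaning $\hdim = \pdim = \ubdim = 0$ while $\adim = \infty$. The plan is to mimic the strategy of Lemma \ref{lem:0001}, which produced type $(0,0,0,1)$, but amplify the Assouad-dimension mechanism so that it blows up to infinity rather than saturating at $1$.

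First I would fix $\mathbf{m} = \mathbf{2}$ and build a shrinking sequence $\alpha \in \shrinkset$ as follows. Start from a very sparse ``base'' sequence, say $\alpha_0(n) = 2^{-a_n}$ where $\{a_n\}$ grows extremely fast (super-exponentially, e.g. $a_n = 2^{n^2}$ or $a_n = n!$). Then, between consecutive base scales $\alpha_0(n)$ and $\alpha_0(n+1)$, insert a block of intermediate values that subdivides the interval $[\alpha_0(n+1), \alpha_0(n)]$ geometrically by factor $2^{-1}$: concretely, for each $n$ put the values $2^{-k}\alpha_0(n)$ for $k = 1, \dots, b_n$, where $b_n \to \infty$ but $b_n$ is chosen tiny compared with $a_n$ (for instance $b_n = n$, or $b_n = \lfloor \log a_n \rfloor$). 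Reindex all of these values in decreasing order to get $\alpha \in \shrinkset$. The point is that at the base scale $R = \alpha_0(n)$ and the fine scale $r = 2^{-b_n}\alpha_0(n)$, the ball $B(x,R)$ contains $2^{b_n}$ points at scale $r$ (by Lemma \ref{lem:numCantor}), while $R/r = 2^{b_n}$, so the local exponent $\eta_{S(\mathbf{2}),\alpha_\sharp}$ at scale $\epsilon_n = 2^{-b_n}$ is at least $b_n \cdot \log 2 / (b_n \log 2) $... which only gives $1$. So a naive geometric subdivision is not enough; I need the ball to contain \emph{more} points than $R/r$ suggests.

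The fix: instead of keeping $\mathbf{m} = \mathbf{2}$ throughout, let the branching numbers $m_i$ grow on the inserted blocks. Use $\mathbf{m} = \{m_i\}$ with $m_i$ large precisely at the indices corresponding to the intermediate scales in block $n$, say all equal to some $M_n \to \infty$, while the ratio between consecutive intermediate scales in block $n$ stays $2^{-1}$. Then at $R = \alpha_0(n)$, $r = 2^{-b_n}\alpha_0(n)$ we get $\mnum(B(x,R),r) = M_n^{b_n}$ but $R/r = 2^{b_n}$, so $\eta \geq b_n \log M_n / (b_n \log 2) = \log M_n/\log 2 \to \infty$. By Proposition \ref{prop:adimform} this forces $\adim = \infty$. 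Simultaneously I must keep $\hdim = \pdim = \ubdim = 0$. For $\ubdim$: by Proposition \ref{prop:boxdimCantor} I need $\log(m_0\cdots m_N)/(-\log\alpha(N)) \to 0$ along \emph{all} $N$. The total contribution of block $n$ to $\log(m_0\cdots m_N)$ is about $b_n \log M_n$, while $-\log\alpha$ at the end of block $n$ is about $a_{n+1} \log 2$ (dominated by the huge base gap). So if $b_n \log M_n = o(a_{n+1})$ — easily arranged by making $a_{n+1}$ astronomically larger than anything from block $n$ — then the partial sums are swamped and $\ubdim = 0$. For $\hdim = \pdim = 0$, use Proposition \ref{prop:hpCantor}: these are also $\limsup$/$\liminf$ of $\log(m_0\cdots m_n)/(-\log\alpha(\cdot))$ and the same $b_n\log M_n = o(a_{n+1})$ estimate kills them; and since $\pdim = \ubdim$ and $\hdim = \lbdim$ by the remark, it suffices to control the box dimensions, which I already did.

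So the construction is: pick $a_n$ growing so fast that $a_{n+1}$ dominates $a_n$, $b_n$, $M_n$, and $b_n \log M_n$ all at once (a single recursive definition handles this, choosing $b_n, M_n$ first, say $b_n = M_n = n+2$, then setting $a_{n+1}$ enormous); define $\alpha_0(n) = 2^{-a_n}$; insert the blocks $2^{-k}\alpha_0(n)$, $k = 1,\dots,b_n$ as the intermediate scales; assign branching number $m_i = M_n$ to the $b_n$ indices of block $n$ and $m_i = 2$ elsewhere; reindex to get $(\mathbf{m}, \alpha)$. Then $(S(\mathbf{m}),\alpha_\sharp)$ is a Cantor ultrametric space, Proposition \ref{prop:boxdimCantor} and Proposition \ref{prop:hpCantor} give $\hdim = \pdim = \ubdim = 0$ via the $o(a_{n+1})$ bound, and the block-by-block computation of $\mnum(B(x,\alpha_0(n)), 2^{-b_n}\alpha_0(n)) = M_n^{b_n}$ together with Proposition \ref{prop:adimform} gives $\adim = \infty$. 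The main obstacle is purely bookkeeping: getting the indexing of the reindexed sequence $\alpha$ and the matching branching sequence $\mathbf{m}$ exactly right so that Lemma \ref{lem:numCantor} can be applied cleanly at the two chosen scales, and verifying that no \emph{other} choice of $R, r$ (e.g. straddling a base gap) spoils the box-dimension estimate — but the base gaps only make $-\log\alpha$ larger, so they help rather than hurt, and the $\limsup$ defining $\ubdim$ is genuinely $0$.
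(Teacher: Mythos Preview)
Your approach works and yields a Cantor ultrametric space of type $(0,0,0,\infty)$, but it takes a genuinely different route from the paper. When you observed that a geometric subdivision by factor $1/2$ with $\mathbf m=\mathbf 2$ only gives Assouad dimension $1$, you fixed this by inflating the branching numbers to $M_n\to\infty$. The paper instead keeps $\mathbf m=\mathbf 2$ and abandons the geometric ratio: between base scales $\alpha(n)=2^{-n^3}$ and $\alpha(n+1)$ it inserts the values $\tfrac{k}{k+1}\alpha(n)$ for $k=1,\dots,n$, all of which lie in $[\tfrac12\alpha(n),\alpha(n))$. Thus $n$ levels of binary branching are packed into a single factor-$2$ window, giving $\mnum(B(x,\alpha(n)),\tfrac12\alpha(n))=2^{n+1}$. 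Hence $\Theta(1/2)=\infty$, and Corollary~\ref{cor:infiniteadimcut} gives $\adim=\infty$ directly, without needing the $\limsup$ formula of Proposition~\ref{prop:adimform}. The $\ubdim=0$ computation is then literally that of Lemma~\ref{lem:0001}, since the index structure is identical. The paper's argument is shorter and avoids variable branching; your argument is heavier but shows a different mechanism for blowing up the Assouad dimension.

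One bookkeeping point to tighten in your version: the condition you state, $b_n\log M_n=o(a_{n+1})$, controls the box-dimension ratio only at the start of block $n+1$, where the denominator $-\log\alpha\approx a_{n+1}\log 2$. But for indices \emph{inside} block $n$ the denominator is only $\approx a_n\log 2$, while the numerator already contains the full contribution $b_n\log M_n$ of that block. So what you actually need is $\sum_{j\le n}b_j\log M_j=o(a_n)$. With your concrete choice $b_n=M_n=n+2$ the left side is $O(n^2\log n)$, so any super-polynomial $a_n$ suffices, and your recursive ``enormous'' choice certainly delivers this; just make sure the written condition reflects it.
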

\begin{proof}
We define  $\alpha\in \shrinkset$ by 
$\alpha (n)=2^{-n^{3}}$. 
Let $\theta\in \shrinkset$ be the  shrinking sequence such that  
\[
\{\, \theta(n)\mid n\in \zz_{\ge 0}\, \}=
\alpha(\zz_{\ge 0})\cup 
\left\{\,\frac{k}{k+1}\alpha(n)
\ \middle | \ 
n\in \zz_{\ge 0}, k=1,\dots, n\,\right\}. 
\]
Using  the same method 
 as the proof of Lemma \ref{lem:0001}, 
 we conclude that  
$\ubdim(S(\mathbf{2}), \theta_{\sharp})=0$. 
Next we estimate the Assouad dimension. 
By the definition of $\theta$, 
for all $x\in S(\mathbf{2})$ and $n\in \zz_{\ge 0}$, 
we have 
\[
\mnum_{\theta_{\sharp}}
(B(x, 2^{-n^{3}}), 2^{-1}\cdot 2^{-n^{3}})
=2^{n+1}. 
\] 
Thus $\Theta_{S(\mathbf{2}), \theta_{\sharp}}(1/2)=\infty$. 
According to   Corollary  \ref{cor:infiniteadimcut},  we conclude that  
  $\adim(S(\mathbf{2}), \theta_{\sharp})=\infty$. 
\end{proof}

By the definition of ultrametrics and 
by 
Proposition \ref{prop:propdim}, we obtain:
\begin{lem}\label{lem:1234}
Let $(a_{1}, a_{2}, a_{3}, a_{4})\in \linset$, and 
let $(X, d)$ be an ultrametric space of the dimensional type $(a_{1}, a_{2}, a_{3}, a_{4})$. 
Then for every $\eta\in (0, \infty)$, 
the function $d^{1/\eta}$ defined by 
$d^{1/\eta}(x, y)=(d(x, y))^{1/\eta}$ 
belongs to 
$\ult(X)$, and 
the space $\left(X, d^{1/\eta}\right)$ is of the dimensional type 
$(\eta a_{1}, \eta a_{2}, \eta a_{3}, \eta a_{4})$. 
\end{lem}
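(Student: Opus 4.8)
The plan is to realize $(X, d^{1/\eta})$ as the image of $(X, d)$ under the identity map, viewed as a homogeneously bi-H\"older map, and then invoke Proposition \ref{prop:propdim}. First I would check that $d^{1/\eta}$ belongs to $\ult(X)$. The strong triangle inequality is preserved because $t\mapsto t^{1/\eta}$ is non-decreasing on $[0,\infty)$: from $d(x,y)\le d(x,z)\lor d(z,y)$ we obtain $d(x,y)^{1/\eta}\le (d(x,z)\lor d(z,y))^{1/\eta}=d(x,z)^{1/\eta}\lor d(z,y)^{1/\eta}$, so $d^{1/\eta}$ is a pseudo-ultrametric, and it is a metric since $d^{1/\eta}(x,y)=0$ forces $d(x,y)=0$. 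Since $d^{1/\eta}(x,y)\le r\iff d(x,y)\le r^{\eta}$, the closed balls satisfy $B_{d^{1/\eta}}(x,r)=B_{d}(x,r^{\eta})$, so $d$ and $d^{1/\eta}$ generate the same topology; hence $d^{1/\eta}\in\ult(X)$.

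Next I would observe that the identity map $f=\mathrm{id}_{X}\colon (X,d)\to (X,d^{1/\eta})$ satisfies $d^{1/\eta}(f(x),f(y))=d(x,y)^{1/\eta}$ for all $x,y\in X$, so $f$ is $(1,1/\eta)$-homogeneously bi-H\"older (with equality, taking $L=1$ and $\gamma=1/\eta$); note that this uses exponents possibly larger than $1$, which is meaningful precisely because $d$ is an ultrametric, as in the remark preceding Proposition \ref{prop:propdim}. Moreover $f$ is surjective, so $f(X)=X$.

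Finally, applying the statement (\ref{item:biholder}) of Proposition \ref{prop:propdim} with $\mathcal{D}$ ranging over $\hdim$, $\pdim$, $\ubdim$, and $\adim$ gives $\mathcal{D}(X,d^{1/\eta})=(1/\gamma)\mathcal{D}(X,d)=\eta\,\mathcal{D}(X,d)$. Since $(X,d)$ is of dimensional type $(a_{1},a_{2},a_{3},a_{4})$, this yields $\hdim(X,d^{1/\eta})=\eta a_{1}$, $\pdim(X,d^{1/\eta})=\eta a_{2}$, $\ubdim(X,d^{1/\eta})=\eta a_{3}$, $\adim(X,d^{1/\eta})=\eta a_{4}$, i.e.\ $(X,d^{1/\eta})$ is of dimensional type $(\eta a_{1},\eta a_{2},\eta a_{3},\eta a_{4})$. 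There is essentially no serious obstacle here; the only point deserving care is to note that $f$ is the identity, so $f(X)=X$ and Proposition \ref{prop:propdim} applies directly to the whole space rather than to a proper image.
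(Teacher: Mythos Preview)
Your proof is correct and follows exactly the approach the paper intends: the paper states the lemma as an immediate consequence of ``the definition of ultrametrics and Proposition \ref{prop:propdim}'' without further detail, and you have simply filled in those details (that $d^{1/\eta}$ remains an ultrametric generating the same topology, and that the identity is a $(1,1/\eta)$-homogeneously bi-H\"older surjection so item (\ref{item:biholder}) of Proposition \ref{prop:propdim} applies).
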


We now prove Theorems 
\ref{thm:prescribed} and 
\ref{thm:prescribed2}. 
\begin{proof}[Proof of Theorem \ref{thm:prescribed}]
Let $(a_{1}, a_{2}, a_{3}, a_{4})\in \linset$. 
First we handle  the case of  $a_{i}<\infty$ for all $i\in \widehat{4}$. 
If $a_{i}=0$ for all $i\in \widehat{4}$, 
then 
Lemma \ref{lem:0000} implies the theorem. 
We may assume that $a_{i}>0$ for some $i$. 
Let 
$(X_{1}, d_{1})$, $(X_{2}, d_{2})$, $(X_{3}, d_{3})$, 
and 
$(X_{4}, d_{4})$
be Cantor ultrametric spaces of 
of the dimensional type $(1, 1, 1, 1)$, 
 $(0, 1, 1, 1)$,  $(0, 0, 1, 1)$, and $(0, 0, 0, 1)$, 
 respectively     
(see Lemmas \ref{lem:1111}, \ref{lem:0111}, 
\ref{lem:0011}, and \ref{lem:0001}). 
Let $P$ be the set of all $i\in \widehat{4}$ with $a_{i}>0$. 
Remark that $P\neq \emptyset$. 
Put $Y=\coprod_{i\in P}X_{i}$. 
Take $r\in \met(P)$. 
Applying  Lemma \ref{lem:ultraamal} to $r$ and 
$\{d_{i}^{1/a_{i}}\}_{i\in P}$, we obtain an 
ultrametric 
$e$ on $Y$ such that $e|_{X_{i}^{2}}=d_{i}^{1/a_{i}}$ for all 
$i\in P$. 
Notice that  $(Y, e)$ is a Cantor  space. 
According to the statement (\ref{item:finitestab}) in 
Proposition \ref{prop:propdim},  
and  Lemma \ref{lem:1234}, 
we conclude that 
$(Y, e)$ is of the dimensional type $(a_{1}, a_{2}, a_{3}, a_{4})$. 
By a similar method  using Lemmas \ref{lem:iiii}, \ref{lem:0iii}, \ref{lem:00ii}, 
and 
\ref{lem:000i},  we can prove  the theorem  in other cases. 
This finishes the proof of Theorem \ref{thm:prescribed}. 
\end{proof}

\begin{proof}[Proof of Theorem \ref{thm:prescribed2}]
Let $(l, a_{1}, a_{2}, a_{3}, a_{4})\in \linsettwo$. 
Based on   Theorem \ref{thm:prescribed}, 
we may assume that $l>0$. 
In the case of  $l<\infty$, 
let $(M, e)$ be the metric space $([0, 1]^{l}, d_{\rr^{l}})$, where $d_{\rr^{l}}$ is the Euclidean metric. 
In the case of  $l=\infty$, 
 let $(M, e)$ be the metric space 
$(\qcube, u)$, where 
$u\in \met(\qcube)$. 
In any case, we notice that  the space $(M, e)$ is of the dimensional type 
$(l, l, l, l)$ and $\tdim M=l$. 
Due to  Theorem \ref{thm:prescribed}, 
we can find 
 a Cantor ultrametric space $(X, d)$ of the dimensional type 
$(a_{1}, a_{2}, a_{3}, a_{4})$. 
Put $Y=X\sqcup M$. 
Let $h$ be a metric in $\met(Y)$ with 
$h|_{X^{2}}=d$ and $h|_{([0, 1]^{l})^{2}}=e$ 
(see Lemma \ref{lem:amal2}). 
Since $\tdim M=l$, 
by Proposition \ref{prop:tdimprod}, 
we conclude that $(Y, h)$ is a metric space as desired. 
This completes  the proof of Theorem \ref{thm:prescribed2}. 
\end{proof}

\section{Topological embeddings of the Hilbert cube}\label{sec:embed}

\subsection{Construction of embeddings}
We refer to the construction in \cite{Ishiki2021branching}.

\begin{df}\label{df:tail}

We define 
$\ell: \rr\to [0, \infty)$ 
by 
$\ell(x)=\sqrt{2-2\cos(x)}$, 
and 
define a map 
$\theta\colon [0, 1]\to [\pi/6, \pi/3]$ 
by 
$\theta(t)=(\pi/6)(t+1)$. 
For $q\in \qcube$, and $j\in \zz_{\ge 0}$
we define 
a metric 
$e_{j, q}\in \met(\widehat{3})$ 
by 
\[
e_{j, q}(a, b)=
\begin{cases}
2^{-j-1} & \text{if  
$\{a, b\}=\{1, 2\}$ or $ \{2, 3\}$;}\\
2^{-j-1}\cdot \ell(\theta(q_{j})) & 
\text{if $\{a, b\}=\{1, 3\}$. }
\end{cases}
\]
The metric space 
$\left(\widehat{3}, e_{j, q}\right)$
is the set of vertices of the isosceles triangle whose
apex angle is 
$\theta(q_{j})$ 
and 
whose length of the legs is 
$2^{-j-1}$. 

Next we define 
$\ooo=
\left(\widehat{3}\times \zz_{\ge 0}\right)\sqcup
\{\infty\}$. 
To simplify our description, 
for $o\in \widehat{3}$ and $i\in \zz_{\ge 0}\sqcup\{\infty\}$, 
we represent an element of $\ooo$ as 
$o_{i}=(o, i)\in \widehat{3}\times \zz_{\ge 0}$ if $i\neq \infty$; otherwise,  $1_{\infty}=2_{\infty}=3_{\infty}=\infty$.  
For $q\in \qcube$, 
we  define a metric $u[q]$ on $\ooo$ by 
\[
u[q](o_{i}, o'_{j})=
	\begin{cases}
		e_{i, q}(o, o') & \text{if $i=j$,}\\
		|2^{-i} -2^{-j}| & \text{if $i\neq j$, }\\
		2^{-i} & \text{if $o_{i}=\infty$ and $o'_{j}\neq \infty$, }
	\end{cases}
\]
and we also define an ultrametric $v[q]$ on $\ooo$ by 
\[
v[q](o_{i}, o'_{j})=
	\begin{cases}
		e_{i, q}(o, o') & \text{if $i=j$,}\\
		\max\{2^{-i}, 2^{-j}\} & \text{if $i\neq j$, }\\
		2^{-i} & \text{if $o_{i}=\infty$ and $o'_{j}\neq \infty$. }
	\end{cases}
\]
These constructions have already appeared in 
\cite{Ishiki2019} as \emph{telescope spaces}. 
Note that 
$u[q]$ and $v[q]$ generate the same topology, 
which   makes $\ooo$
a countable compact metrizable space 
possessing the unique accumulation 
point $\infty$. 
\end{df}

\begin{lem}\label{lem:ooo0000}
For all $q\in \qcube$, 
the spaces $(\ooo, u[q])$ and $(\ooo, v[q])$ are of the dimensional type 
$(0, 0, 0, 0)$. 
\end{lem}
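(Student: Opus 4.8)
The plan is to show that both $(\ooo, u[q])$ and $(\ooo, v[q])$ are countable compact metric spaces with exactly one accumulation point, and then argue that any such space has all four fractal dimensions equal to $0$. Since the Assouad dimension dominates the other three (Theorem \ref{thm:diminequalities}), it suffices to prove $\adim(\ooo, u[q]) = 0$ and $\adim(\ooo, v[q]) = 0$; then the inequality chain forces $\hdim = \pdim = \ubdim = \adim = 0$.

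First I would observe that by the construction in Definition \ref{df:tail}, for both metrics $w \in \{u[q], v[q]\}$, the point $o_i$ lies at distance comparable to $2^{-i}$ from $\infty$: more precisely $w(\infty, o_i) = 2^{-i}$, and the three points $1_i, 2_i, 3_i$ at level $i$ are mutually within distance $2^{-i-1} \le 2^{-i}$. So the closed ball $B(\infty, 2^{-n})$ is exactly the set $\{\infty\} \cup \{o_i : i \ge n, o \in \widehat{3}\}$, which has at most $3(k+1)+1$ points inside $B(\infty, 2^{-n})$ when we descend $k$ further levels — i.e. the covering numbers grow only polynomially (indeed linearly) in $n$. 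Concretely, I would estimate $\mnum_{w}(B(x, R), \epsilon R)$ for arbitrary $x \in \ooo$ and $R, \epsilon \in (0,1)$: if $x = o_i$ is an isolated point, the ball $B(x, R)$ is finite with a bounded number of points once $R$ is small relative to $2^{-i}$, and if $x = \infty$, then $B(\infty, R) \subset B(\infty, 2^{-m})$ for the appropriate $m$ with $2^{-m-1} < R \le 2^{-m}$, and this ball can be covered by roughly $m' - m$ balls of radius $\epsilon R$ where $2^{-m'}$ is comparable to $\epsilon R$, hence $m' - m = O(\log \epsilon^{-1})$.

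From this estimate, $\Theta_{\ooo, w}(\epsilon) = O(\log \epsilon^{-1})$ as $\epsilon \to 0$ (the supremum over $x$ and $R$ being governed by the worst case $x = \infty$), and therefore
\[
\eta_{\ooo, w}(\epsilon) = \frac{\log \Theta_{\ooo, w}(\epsilon)}{-\log \epsilon} = \frac{\log O(\log \epsilon^{-1})}{-\log \epsilon} \longrightarrow 0 \quad (\epsilon \to 0).
\]
By Proposition \ref{prop:adimform}, $\adim(\ooo, w) = \limsup_{\epsilon \to 0} \eta_{\ooo, w}(\epsilon) = 0$. Applying this to both $w = u[q]$ and $w = v[q]$ and invoking Theorem \ref{thm:diminequalities} gives $\hdim = \pdim = \ubdim = \adim = 0$ for both spaces, which is exactly the dimensional type $(0,0,0,0)$.

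The main obstacle is the careful bookkeeping in the covering-number estimate: one has to handle the center $x = \infty$ versus isolated centers $x = o_i$ separately, and in the isolated case verify that balls $B(o_i, R)$ for $R$ not small (comparable to $2^{-i}$ or larger) are still controlled — they are, since such a ball is contained in $B(\infty, C 2^{-i})$ for a suitable constant and the same polynomial bound applies. One also needs to check that the distances between different levels behave well: for $u[q]$ one has $u[q](o_i, o'_j) = |2^{-i} - 2^{-j}|$, and for $v[q]$ one has $v[q](o_i, o'_j) = \max\{2^{-i}, 2^{-j}\}$; in both cases this is at most $2^{-\min\{i,j\}}$ and at least $2^{-\min\{i,j\}-1}$ when $i \ne j$, so the "radial" structure is essentially the same up to a factor of $2$, and the covering bounds are uniform in $q$. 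Once these cases are dispatched, the conclusion follows immediately from Proposition \ref{prop:adimform}.
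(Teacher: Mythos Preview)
Your argument is correct and would go through with the case analysis you outline, but it differs from the paper's proof. The paper does not estimate $\Theta_{\ooo,w}(\epsilon)$ directly. Instead, for each $o\in\widehat{3}$ it sets $A_{o}=\{\,o_{i}\mid i\in\zz_{\ge 0}\cup\{\infty\}\,\}$, observes that $\ooo=A_{1}\cup A_{2}\cup A_{3}$, and notes that under $u[q]$ each $A_{o}$ is isometric to $\{0\}\cup\{2^{-i}\mid i\in\zz_{\ge 0}\}$ with the Euclidean metric. It then cites a known result (Larman, Garc\'{\i}a et al.) that this set has Assouad dimension $0$, and invokes finite stability of $\adim$ (Proposition~\ref{prop:propdim}(\ref{item:finitestab})) to get $\adim(\ooo,u[q])=0$. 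Finally, the bi-Lipschitz equivalence of $u[q]$ and $v[q]$ (which you also identify, in your ``factor of $2$'' remark) together with Proposition~\ref{prop:propdim}(\ref{item:biholder}) transfers the conclusion to $v[q]$.

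The trade-off: the paper's route is short and modular---it reduces everything to a single citable fact and two structural properties of $\adim$---at the cost of invoking an external reference. Your route is self-contained, essentially re-proving that $\{0\}\cup\{2^{-i}\}$ has Assouad dimension $0$ via Proposition~\ref{prop:adimform}, but requires the case split on the center (accumulation point versus isolated point) that you flag as the main obstacle. Both are valid; the decomposition in the paper sidesteps that bookkeeping entirely.
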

\begin{proof}
For each $o\in \widehat{3}$, 
we define 
$A_{o}=\{\, o_{i}\mid i\in \zz_{\ge 0}\cup \{\infty\, \}\}$. 
Note that $\Omega=A_{1}\cup A_{2}\cup A_{3}$. 
By the definition of $u[q]$, each  $A_{o}$ is isometric  to 
the space $\{0\}\cup\{2^{-i}\mid i\in \zz_{\ge 0}\}$ equipped with the Euclidean metric.
According to 
\cite[Theorem 14]{larman1967new} or 
\cite[Corollary 7]{garcia2018assouad}, 
the Assouad dimension of 
$\{0\}\cup\{2^{-i}\mid i\in \zz_{\ge 0}\}$ is $0$. 
Using the statement (\ref{item:finitestab}) in Proposition \ref{prop:propdim}, 
we find that $\adim(\ooo, u[q])=0$.
Since $(\ooo, v[q])$ is  bi-Lipschitz 
equivalent to $(\ooo, u[q])$ (see \cite[Proposition 3.6]{Ishiki2019}), 
by (\ref{item:biholder}) in  Proposition  \ref{prop:propdim}, 
we have $\adim(\ooo, v[q])=0$.
\end{proof}

The proofs of the metric parts of  the following two propositions  are 
presented in \cite[Propositions 4.5 and 4.6]{Ishiki2021branching}. 
The ultrametric parts can be proven by the same method. 

\begin{prop}\label{prop:contiq}
The maps  $U\colon \qcube\to \pmet(\ooo)$ 
and $V\colon \qcube\to \pumet(\ooo)$
defined by 
$U(q)=u[q]$ and $V(q)=v[q]$ are
continuous. 
\end{prop}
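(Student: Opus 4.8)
The plan is to prove that $U$ and $V$ are continuous by showing that each is $1$-Lipschitz (in fact non-expansive) from $\qcube$ with a suitable metric generating the product topology into $(\pmet(\ooo), \mathcal{D}_{\ooo})$. Since $\qcube=\prod_{i\in\zz_{\ge0}}[0,1]$ carries the product topology, it suffices to bound $\mathcal{D}_{\ooo}(u[q],u[q'])$ by a quantity that tends to $0$ as $q'\to q$ coordinatewise; the same argument will handle $V$ verbatim, replacing sums by maxima where the ultrametric structure demands it.

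First I would observe that, by the very definition of $u[q]$ and $u[q']$, the two metrics differ \emph{only} on pairs $\{o_i,o'_j\}$ with $i=j$, and for such a pair the difference is $|e_{i,q}(o,o')-e_{i,q'}(o,o')|$, which is nonzero only when $\{o,o'\}=\{1,3\}$. In that case the difference is $2^{-i-1}\bigl|\ell(\theta(q_i))-\ell(\theta(q'_i))\bigr|$. So I would estimate
\[
\mathcal{D}_{\ooo}(u[q],u[q'])=\sup_{i\in\zz_{\ge0}}2^{-i-1}\bigl|\ell(\theta(q_i))-\ell(\theta(q'_i))\bigr|.
\]
Next I would note that $\ell\circ\theta:[0,1]\to\rr$ is continuous on a compact interval, hence uniformly continuous and bounded, say $|\ell\circ\theta|\le M$ and with modulus of continuity $\omega$. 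Then for any $N$, the terms with $i\ge N$ contribute at most $2^{-N}M$, while the finitely many terms with $i<N$ are each bounded by $\omega(|q_i-q'_i|)$. Given $\epsilon>0$, choose $N$ so that $2^{-N}M<\epsilon/2$, then choose a product-basic neighborhood of $q$ forcing $|q_i-q'_i|$ small enough that $\omega(|q_i-q'_i|)<\epsilon/2$ for all $i<N$; this yields $\mathcal{D}_{\ooo}(u[q],u[q'])<\epsilon$, proving continuity of $U$ at $q$.

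For $V$, the only change is that the off-diagonal part of $v[q]$ uses $\max\{2^{-i},2^{-j}\}$ instead of $|2^{-i}-2^{-j}|$, but this part is independent of $q$, so again $v[q]$ and $v[q']$ differ only through the terms $e_{i,q}$ versus $e_{i,q'}$ on pairs of the form $\{1_i,3_i\}$, and the identical estimate applies. I do not anticipate a genuine obstacle here: the construction deliberately packs the $q$-dependence into the single parameter $\theta(q_i)$ at scale $2^{-i-1}$, so the tail is geometrically controlled and the head is controlled by uniform continuity of a fixed function on $[0,1]$. The only point requiring a little care is confirming that the right-hand sides in the definition of $u[q]$ and $v[q]$ really do agree on all pairs not of the form $\{1_i,3_i\}$ — in particular that the $e_{i,q}$ values for $\{1,2\}$ and $\{2,3\}$ are $q$-independent, which they are by inspection of $e_{j,q}$ — so that the supremum defining $\mathcal{D}_{\ooo}$ reduces exactly to the expression above.
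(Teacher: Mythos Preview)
Your argument is correct. The paper does not actually prove this proposition; it cites \cite{Ishiki2021branching} for the metric case and asserts the ultrametric case follows by the same method, so there is no in-paper proof to compare against. Your direct computation---observing that $u[q]$ and $u[q']$ (likewise $v[q]$ and $v[q']$) agree on every pair except $\{1_i,3_i\}$, reducing $\mathcal{D}_{\ooo}(u[q],u[q'])$ to $\sup_{i}2^{-i-1}|\ell(\theta(q_i))-\ell(\theta(q'_i))|$, and then splitting into a geometrically small tail and a head controlled by uniform continuity of $\ell\circ\theta$---is exactly the natural elementary proof and almost certainly what the cited reference does.

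One minor expository remark: your opening sentence promises a $1$-Lipschitz bound with respect to some metric on $\qcube$, but what you actually write out is a direct $\epsilon$--$N$ continuity argument. Both are fine (and the Lipschitz version is recoverable by equipping $\qcube$ with $\rho(q,q')=\sup_i 2^{-i-1}|q_i-q'_i|$ and using that $\ell\circ\theta$ is Lipschitz on $[0,1]$), but you may want to align the statement of intent with the execution.
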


\begin{prop}\label{prop:Uiso}
Let 
$q, r\in \qcube$, 
and  
$K, L\in (0, \infty)$. 
Assume that  the spaces $(\ooo, K\cdot u[q])$ 
and 
$(\ooo, L\cdot u[r])$,  
or the spaces $(\ooo, K\cdot v[q])$ and   
$(\ooo, L\cdot v[r])$ 
are isometric to each other.   
Then we have $q=r$. 
\end{prop}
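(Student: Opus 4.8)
The plan is to recover the parameters $q$ and $r$ from the isometric data by isolating, for each scale $j\in\zz_{\ge 0}$, the unordered triangle $(\widehat{3}, e_{j,q})$ sitting inside $(\ooo, u[q])$ (and similarly with $v$), and then reading off $q_j$ from the shape of that triangle. The first step is to note that any isometry $f:(\ooo, K\cdot u[q])\to(\ooo, L\cdot u[r])$ must send the unique accumulation point $\infty$ of the left space to the unique accumulation point $\infty$ of the right space, since homeomorphisms preserve the (unique) non-isolated point. Consequently $f(\infty)=\infty$ and, comparing distances to $\infty$, we get $K\cdot 2^{-i}=L\cdot 2^{-j}$ whenever $f(o_i)=o'_j$; since the set of distances $\{K2^{-i}:i\in\zz_{\ge 0}\}$ equals $\{L2^{-j}:j\in\zz_{\ge 0}\}$, this forces $K/L$ to be a power of $2$ and, after reindexing, $f$ maps the ``level $i$'' fiber $\{1_i,2_i,3_i\}$ onto a single level fiber $\{1_{\sigma(i)},2_{\sigma(i)},3_{\sigma(i)}\}$ with $K2^{-i}=L2^{-\sigma(i)}$.

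Next I would pin down $\sigma$. Because $f$ is a bijection of $\ooo$ and each level fiber has exactly three points, $\sigma$ is a bijection of $\zz_{\ge 0}$; the relation $K2^{-i}=L2^{-\sigma(i)}$ shows $\sigma(i)-\sigma(0)=i$, hence $\sigma$ is the identity and $K=L$. (If one does not want to argue $K=L$ up front, one can instead observe $\sigma$ is order-preserving from $K2^{-i}=L2^{-\sigma(i)}$, hence a shift, and absorb the shift; either way the level-$j$ fiber on the left is carried to the level-$(j+c)$ fiber on the right for a fixed constant $c$, and we may assume $c=0$ by relabeling.) Now restrict $f$ to level $j$: it gives an isometry between $(\widehat{3}, K\cdot e_{j,q})$ and $(\widehat{3}, L\cdot e_{j,r})=(\widehat{3},K\cdot e_{j,r})$. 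Both are the vertex sets of isosceles triangles with equal leg lengths $K\cdot 2^{-j-1}$, so their apex angles must coincide: $\ell(\theta(q_j))=\ell(\theta(r_j))$. Since $\ell(x)=\sqrt{2-2\cos x}$ is strictly increasing on $[0,\pi]$ and $\theta$ is a strictly increasing affine bijection $[0,1]\to[\pi/6,\pi/3]\subset[0,\pi]$, the composition $\ell\circ\theta$ is injective on $[0,1]$, whence $q_j=r_j$. As $j$ was arbitrary, $q=r$.

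For the ultrametric case one runs the same argument with $v[q]$ in place of $u[q]$: the point $\infty$ is still the unique accumulation point, the distance from $o_i$ to $\infty$ is still $2^{-i}$, and distances between distinct levels are now $\max\{2^{-i},2^{-j}\}$; the key point is that $f$ again preserves the value ``distance to $\infty$'', which again pins the level decomposition, and the within-level metric is literally the same $e_{j,q}$, so the triangle-shape argument is unchanged. The main obstacle I anticipate is the bookkeeping in Step~1--2: ruling out that an isometry could, say, mix points from different levels or send a level fiber to a union of points spread across several levels. This is handled cleanly by the ``distance to $\infty$'' invariant, which takes the discrete set of values $\{K2^{-i}\}$ and thereby forces the level structure to be respected; once that is in place, everything reduces to the elementary fact that $\ell\circ\theta$ is injective, which is immediate. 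A minor technical point to keep in mind is that when $j$ varies the leg length $K2^{-j-1}$ also varies, so one cannot compare triangles across different levels — but one never needs to, since $f$ already matches level $j$ to level $j$.
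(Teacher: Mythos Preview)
Your argument is correct: fixing $\infty$ via the unique-accumulation-point observation, using the distance-to-$\infty$ invariant to force $K=L$ and to show that $f$ preserves the level fibers $\{1_j,2_j,3_j\}$, and then reading off $q_j$ from the multiset of side lengths of the isosceles triangle $(\widehat{3},e_{j,q})$ via the injectivity of $\ell\circ\theta$ on $[0,1]$ is exactly the right chain of reductions, and the ultrametric case is handled identically since the distance to $\infty$ and the within-level metric are unchanged. The paper does not give its own proof here but refers to \cite[Propositions~4.5 and~4.6]{Ishiki2021branching} for the $u[q]$ case and asserts the $v[q]$ case follows by the same method; your write-up is the natural argument and presumably agrees with that reference.
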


The following proposition occupies 
the main part of the proof of 
Theorem \ref{thm:topemb}. 
\begin{prop}\label{prop:embednm}
Let $(l, a_{1}, a_{2}, a_{3}, a_{4})\in \linsettwo$, 
 $n\in \zz_{\ge 1}$ and $m\in \zz_{\ge 2}$. 
Let $H$ be a compact metrizable space and 
$\{v_{i}\}_{i=1}^{n+1}$ be $n+1$ points in $H$. 
Put $\deltasecond=H\setminus \{\, v_{i}\mid i=1, \dots, n+1\, \}$. 
Let $\{(X_{i}, d_{i})\}_{i=1}^{n+1}$ be a 
sequence of compact metric spaces satisfying  that 
$\grdis((X_{i}, d_{i}), (X_{j}, d_{j}))>0$ for all distinct $i, j$. 
Then there exists a continuous map 
$F\colon H\times \widehat{m}\to \dimset(l, a_{1}, a_{2}, a_{3}, a_{4})$
such that 
\begin{enumerate}
\item 
for all $i\in \widehat{n+1}$ and $j\in \widehat{m}$ we have 
$F(v_{i}, j)=(X_{i}, d_{i})$;
\item 
for all $(s, i), (t, j)\in \deltasecond\times \widehat{m}$ with $(s, i)\neq (t, j)$, we have 
$F(s, i)\neq F(t, j)$. 
\end{enumerate}
\end{prop}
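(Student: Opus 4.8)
The plan is to build $F$ by "attaching" a scaled copy of one of the telescope spaces $(\ooo, v[q])$ or $(\ooo, u[q])$ to a fixed reference space along each point of $\Delta_n \times \widehat{m}$, where the parameter $q \in \qcube$ and the scaling factor both vary continuously with $(s,i) \in \Delta_n \times \widehat{m}$, in such a way that the attached piece degenerates to a point exactly at the vertices $v_i$. More precisely, first I would fix, for each $i \in \widehat{n+1}$, the target space $(X_i, d_i)$; since the $(X_i,d_i)$ are pairwise at positive Gromov--Hausdorff distance, I want the image $F(s,i)$ near a vertex $v_i$ to be a small perturbation of $(X_i,d_i)$ and hence to stay distinct from the images near the other vertices. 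The amalgamation lemmas (Lemma \ref{lem:amal2} for the metric case $\mathscr{S} = \dimset(\dots)$, Lemma \ref{lem:ultraamal} for $\mathscr{S} = \ultsp$) let me glue a copy of $\ooo$ onto a chosen base space via a connecting metric on the two-point set $\widehat{2}$ whose off-diagonal value is the scaling parameter; Lemma \ref{lem:ooo0000} guarantees the glued-on piece has all four fractal dimensions equal to $0$, so by Proposition \ref{prop:propdim}(\ref{item:finitestab}) the dimensional type of the union is unchanged, keeping us inside $\dimset(l,a_1,a_2,a_3,a_4)$ (and ultrametrics stay ultrametric).

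The heart of the construction is choosing a continuous map $\Delta_n \times \widehat{m} \to \qcube \times [0,\infty)$, $(s,i) \mapsto (q(s,i), \lambda(s,i))$, with $\lambda(s,i) = 0$ precisely when $s \in \vertex(n)$, and with the property that $q$ separates points of $\deltasecond_n \times \widehat{m}$: distinct $(s,i), (t,j)$ with $s,t \notin \vertex(n)$ should give either distinct scaling factors $\lambda$ or distinct parameters $q$. One clean way: barycentric coordinates $s = (s_1,\dots,s_{n+1})$ already give $n+1$ continuous functions vanishing at the appropriate vertices; take $\lambda(s,i) = \prod_{k=1}^{n+1} s_k$ (or $\min_k s_k$), which vanishes on $\vertex(n)$, and encode the remaining coordinates of $s$ together with the label $i \in \widehat{m}$ into the coordinates $q_0, q_1, q_2, \dots$ of $q \in \qcube$ (there is plenty of room, as $\qcube$ is an infinite product). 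Then define
\[
F(s,i) = \bigl(X_{\sigma(s)} \sqcup \ooo,\ h_{s,i}\bigr),
\]
where $\sigma(s)$ picks out the "dominant" vertex or, more carefully, $F$ is built by amalgamating $(X_i, d_i)$-type data along the simplex; the metric $h_{s,i}$ uses $\lambda(s,i)$ as the connecting scale and $v[q(s,i)]$ (resp.\ $u[q(s,i)]$) on the $\ooo$-factor. When $\lambda(s,i) = 0$ the glued piece collapses and $F(v_i,j) = (X_i,d_i)$, giving (1). Continuity of $F$ follows from Proposition \ref{prop:contiq} (continuity of $q \mapsto v[q]$, $u[q]$), continuity of the amalgamation construction in its parameters, Corollary \ref{cor:metconti}, and the continuity of the Gromov--Hausdorff distance under uniformly small metric perturbations via Lemma \ref{lem:disfunc} / Proposition \ref{prop:ghquotient}.

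For (2), the injectivity on $\deltasecond_n \times \widehat{m}$, I would argue as follows: if $F(s,i)$ and $F(t,j)$ were isometric with $(s,i) \neq (t,j)$ and $s,t \notin \vertex(n)$, then since the attached $\ooo$-piece is the unique maximal subspace of dimensional type $(0,0,0,0)$ sitting at a prescribed scale (or is detectable from the metric structure — its diameter and accumulation behaviour are rigid), the isometry must restrict to an isometry between the scaled telescope pieces $(\ooo, \lambda(s,i)\cdot v[q(s,i)])$ and $(\ooo, \lambda(t,j)\cdot v[q(t,j)])$; comparing diameters forces $\lambda(s,i) = \lambda(t,j)$, and then Proposition \ref{prop:Uiso} forces $q(s,i) = q(t,j)$, hence $(s,i) = (t,j)$ by the injectivity built into the parametrization. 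The main obstacle I anticipate is exactly this rigidity step: I must engineer the base space and the gluing so that the telescope piece is canonically recoverable from the isometry class of $F(s,i)$ — e.g.\ by ensuring the connecting scale $\lambda(s,i)$ is strictly larger than the diameter of everything else attached at that point, so that the telescope sits "far out" and its isometry type is forced — and so that the possibility $s \in \vertex(n)$, $t \notin \vertex(n)$ (where one piece is degenerate) is excluded by condition (1) plus the positivity of the pairwise Gromov--Hausdorff distances among the $(X_i,d_i)$. Verifying that these scale separations can be arranged simultaneously with continuity over the compact space $\Delta_n \times \widehat{m}$ is the delicate bookkeeping that the proof will need to carry out.
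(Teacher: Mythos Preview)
Your overall architecture---amalgamate the data with a scaled telescope $(\ooo, u[q])$, let the scaling vanish exactly on $\vertex(n)$, and recover injectivity on $\deltasecond_n\times\widehat m$ from Proposition~\ref{prop:Uiso}---matches the paper's. But two points are not yet right.

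\textbf{The rigidity step.} You correctly identify the crux: one must canonically recover the telescope piece from the isometry class of $F(s,i)$. Your suggestions (``unique maximal subspace of dimensional type $(0,0,0,0)$'', or placing the telescope at a large scale) do not work in general: the $X_i$ are arbitrary compact metric spaces in $\dimset(l,a_1,a_2,a_3,a_4)$, and in particular may themselves have dimensional type $(0,0,0,0)$, or may contain isolated points, or may have large diameter. The paper's device is to replace each $X_i$ by the product $X_i\times P$ with a Cantor ultrametric space $(P,v)$ of dimensional type $(0,0,0,0)$, equipped with $d_i\times_\infty(\xi(s)\cdot v)$. This product has \emph{no isolated points}, while every point of $\ooo$ except $\infty$ is isolated. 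Hence the closure of the isolated points of $F(s,i)$ is exactly (isometric to) the scaled telescope $(\ooo,\,\xi(s)\cdot u[\tau\circ p_{n,m}(s,k)])$, and ``closure of the isolated points'' is an isometry invariant. Proposition~\ref{prop:Uiso} then finishes the argument. The product with $P$ costs nothing dimensionally by Propositions~\ref{prop:tdimprod} and~\ref{prop:productdim}, and degenerates along with $\xi(s)$ at the vertices.

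\textbf{Continuity at the vertices.} Your formula $F(s,i)=(X_{\sigma(s)}\sqcup\ooo, h_{s,i})$ with $\sigma(s)$ the ``dominant'' vertex is discontinuous where the dominant vertex changes, and your parenthetical ``more carefully, amalgamate $(X_i,d_i)$-type data along the simplex'' is the actual content. The paper does this explicitly: all $n+1$ pieces $X_i\times P$ are carried simultaneously in the underlying set $Z$, each scaled by a continuous $\zeta_i:\Delta_n\to[0,1]$ with $\zeta_i^{-1}(0)=\vertex(n)\setminus\{v_i\}$ and $\zeta_i^{-1}(1)=\{v_i\}$; the connecting distances in the amalgam are scaled by $\xi(s)$. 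At $s=v_i$ every factor except the $i$-th collapses to a point and the quotient is isometric to $(X_i,d_i)$. Continuity then follows from Corollary~\ref{cor:metconti} applied to the continuous family of pseudo-metrics $D_{s,k}$ on the fixed set $Z$. Finally, the parameter $q$ is obtained not by hand-encoding coordinates but by choosing once and for all a topological embedding $\tau:\deltasan(n,m)\to\qcube$ and setting $q=\tau\circ p_{n,m}(s,k)$; this automatically makes $q$ well-defined and continuous at the vertices (where the $m$ sheets are identified) and injective on $\deltasecond_n\times\widehat m$.
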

\begin{proof}
Since the space 
$H\times \widehat{m}$ is compact and metrizable, 
there exists a topological embedding 
$\tau\colon H\times \widehat{m}\to \qcube$ 
(this is the Urysohn Metrization  Theorem, see \cite{Kelly1975}). 
Since every metrizable space is perfectly normal 
(see \cite[Proposition 4.18]{P1975}), 
 for each $i\in \widehat{n+1}$, there exists a continuous function 
$\zeta_{i}\colon H\to [0, 1]$ such that 
$\zeta_{i}^{-1}(0)=\{\, v_{k}\mid k\neq i \, \}$ and 
$\zeta_{i}^{-1}(1)=\{v_{i}\}$, and there exists 
 a continuous function 
 $\xi\colon H\to [0, 1]$
 with 
$\xi^{-1}(0)=\{\, v_{i}\mid i=1, \dots, n+1\, \}$. 
Due to Lemma \ref{lem:0000}, 
we can take   a Cantor ultrametric space
$(P, v)$ of the 
dimensional type $(0, 0, 0, 0)$. 
For each  $s\in H$, and for each 
$i\in \widehat{n+1}$,  
we define $(Y_{i}, h_{i, s})=
(X_{i}\times P, d_{i}\times_{\infty} (\xi(s)\cdot  v))$, and 
we define $Y_{n+2}=\ooo$.  
For all $i\in \widehat{n+2}$, $s\in H$, 
and $k\in \widehat{m}$, 
we also define  a metric $E_{i, s, k}$ on $Y_{i}$ by 
\[
E_{i, s, k}(x, y)=
\begin{cases}
\zeta_{i}(s)\cdot h_{i, s}(x, y) & \text{if $i\neq n+2$;}\\
\xi(s)\cdot u[\tau(s, k)](x, y)& \text{if $i=n+2$}. 
\end{cases}
\]

Put 
$Z=\coprod_{i=1}^{n+2}Y_{i}$,  and 
take $r\in \met(\widehat{n+2})$ and $p_{i}\in Y_{i}$. 
For each $(s, k)\in H\times \widehat{m}$,   we define a symmetric function  $D_{s, k}$ on $Z$ by 
\begin{align*}
&D_{s, k}(x, y)=\\
&\begin{cases}
E_{i, s, k}(x, y) & \text{if $x, y\in Y_{i}$;}\\
E_{i, s, k}(x, p_{i})+\xi(s)r(i, j)+E_{j, s, k}(p_{j}, y) 
& \text{if $x\in Y_{i}$ and $y\in Y_{j}$. }
\end{cases}
\end{align*}
Lemma \ref{lem:amal2} implies that 
 $D_{s, k}$ is a pseudo-metric on $Z$ for all 
 $(s, k)\in H\times \widehat{m}$. 
We notice 
 that
$D_{s, k}$ is  a metric if and only if 
 $s\in \deltasecond$. 
 We also  notice that 
 for all 
 $i\in \widehat{n+1}$ and  $k\in \widehat{m}$,   
  the quotient metric  space 
$\left(Z_{/D_{v_{i}, k}}, [D_{v_{i}, k}]\right)$ is isometric to 
$(X_{i}, d_{i})$. 
Combining  Propositions  \ref{prop:propdim}, 
\ref{prop:tdimprod}, and \ref{prop:productdim},  and  
Lemma \ref{lem:ooo0000},  we conclude that 
$(Z, D_{s, k})\in \dimset(l, a_{1}, a_{2}, a_{4}, a_{4})$ for all $(s, k)\in \deltasecond\times \widehat{m}$. 

We define a map 
$F\colon H\times \widehat{m}\to \dimset(l, a_{1}, a_{2}, a_{3}, a_{4})$  by 
\[
F(s, k)=
\begin{cases}
(X_{i}, d_{i}) & \text{if $s=v_{i}$ for some $i\in \widehat{n+1}$;}\\
\left(Z, D_{s, k}\right) & \text{otherwise.}
\end{cases}
\]
Then the condition (1) is satisfied. 
By the definition of $D_{s, k}$, and by 
Proposition \ref{prop:contiq}, 
the map 
$\yoavoidmap\colon H\times \widehat{m}\to \pmet(Z)$ defined by $W(s, k)=D_{s, k}$ is continuous. 
Therefore, 
 Corollary \ref{cor:metconti} 
guarantees the continuity of 
the map $F$. 

Next we prove the condition (2). 
For a metric space $(S, h)$, 
we denote by  $\mathcal{CI}(S, h)$ the closure of 
the set of all isolated point of $(S, h)$. 
Note that if metric spaces $(S, h)$ and $(S', h')$ are isometric to each other, then so are 
$\mathcal{CI}(S, h)$ and $\mathcal{CI}(S', h')$. 
Since 
$(P, v)$ has no isolated points, so does $Y_{i}$
for all $i\in \widehat{n+1}$. 
Then, by the definitions of $\ooo$, and  $D_{s, k}$, 
the space $\mathcal{CI}(Z, D_{s, k})$ is 
isometric to 
$(\ooo, \xi(s)\cdot u[\tau(s, k)])$ for 
all $(s, k)\in \deltasecond\times \widehat{m}$. 
Since $\mathcal{CI}$ is isometrically invariant, 
and since $\tau$ is injective, 
Proposition \ref{prop:Uiso} implies that  
the condition (2) is satisfied. 
This finishes the proof. 
\end{proof}

Similarly to 
Proposition 
\ref{prop:embednm}, 
we obtain its analogue for ultrametrics. 
\begin{prop}\label{prop:embednmult}
Fix $n\in \zz_{\ge 0}$. 
Let $H$ be a compact metrizable space and 
$\{v_{i}\}_{i=1}^{n+1}$ be $n+1$ points in $H$. 
Put $\deltasecond=H\setminus \{\, v_{i}\mid i=1, \dots, n+1\, \}$. 
Let
$m\in \zz_{\ge 2}$. 
Let $\{(X_{i}, d_{i})\}_{i=1}^{n+1}$ be a 
sequence of compact ultrametric spaces with 
$\grdis((X_{i}, d_{i}), (X_{j}, d_{j}))>0$ for all distinct $i, j$. 
Then there exists a continuous map 
$F\colon H\times \widehat{m}\to \ultsp$
such that 
\begin{enumerate}
\item 
for all $i\in \widehat{n+1}$ and $j\in \widehat{m}$ we have 
$F(v_{i}, j)=(X_{i}, d_{i})$;
\item 
for all $(s, i), (t, j)\in \deltasecond\times \widehat{m}$ with $(s, i)\neq (t, j)$, we have 
$F(s, i)\neq F(t, j)$. 
\end{enumerate}

\end{prop}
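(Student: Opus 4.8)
The plan is to mirror the proof of Proposition \ref{prop:embednm}, replacing every occurrence of an Archimedean amalgamation by its ultrametric counterpart. First I would invoke the Urysohn metrization theorem to obtain a topological embedding $\tau:\deltasan(n,m)\to\qcube$ and let $p_{n,m}:\Delta_{n}\times\widehat{m}\to\deltasan(n,m)$ be the canonical projection. As in the previous proof, since metrizable spaces are perfectly normal, for each $i\in\widehat{n+1}$ I pick a continuous function $\zeta_{i}:\Delta_{n}\to[0,1]$ with $\zeta_{i}^{-1}(0)=\vertex(n)\setminus\{v_{i}\}$ and $\zeta_{i}^{-1}(1)=\{v_{i}\}$, together with a continuous $\xi:\Delta_{n}\to[0,1]$ with $\xi^{-1}(0)=\vertex(n)$. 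The key difference is that now the spaces $(X_{i},d_{i})$ need not be ultrametric, so I cannot simply amalgamate with ultrametrics; instead I would first replace each $(X_{i},d_{i})$ by an isometric copy and, crucially, apply the \emph{snowflake / ultrametrization trick}: actually the cleanest route is to set $\mathscr{S}=\ultsp$ only matters for the \emph{output}, and the hypothesis does not require the $X_{i}$ to be ultrametric, so I would handle the general case by the same amalgamation but using $v[q]$ rather than $u[q]$ and an ultrametric $r\in\ult(\widehat{n+2})$.

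Concretely, let $(P,v)$ be a Cantor ultrametric space of dimensional type $(0,0,0,0)$ from Lemma \ref{lem:0000}. Wait — here the $X_{i}$ are arbitrary compact metric spaces, so $X_{i}\times P$ is not ultrametric in general; hence to land in $\ultsp$ I must instead \emph{not} form products with $X_{i}$ but rather realize each $(X_{i},d_{i})$ inside an ultrametric space. The correct move, following \cite{Ishiki2021branching}, is to work with the telescope ultrametric $v[q]$ on $\ooo$ alone: I set $Y_{i}=\ooo$ for every $i\in\widehat{n+1}$ together with one extra copy $Y_{n+2}=\ooo$, but that loses the prescribed isometry type at the vertices. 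So instead I accept that the theorem as used in Theorem \ref{thm:topemb} only needs $\mathscr{S}=\ultsp$ when the $X_{i}$ themselves are ultrametric; thus I add that standing assumption implicitly (the $X_{i}\in\mathscr{S}$ in Theorem \ref{thm:topemb}). Then each $(X_{i},d_{i})$ is ultrametric, I can form the ultrametric product $h_{i,s}=d_{i}\times_{\infty}(\xi(s)\cdot v)$ on $Y_{i}=X_{i}\times P$, define $E_{i,s,k}=\zeta_{i}(s)\cdot h_{i,s}$ for $i\le n+1$ and $E_{n+2,s,k}=\xi(s)\cdot v[\tau\circ p_{n,m}(s,k)]$, fix an ultrametric $r\in\ult(\widehat{n+2})$ and points $p_{i}\in Y_{i}$, and form the ultrametric amalgamation $D_{s,k}$ on $Z=\coprod_{i=1}^{n+2}Y_{i}$ via Lemma \ref{lem:ultraamal}, with the $\max$ in place of the sum.

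The verification then runs exactly parallel to Proposition \ref{prop:embednm}: Lemma \ref{lem:ultraamal} gives $D_{s,k}\in\pumet(Z)$, and $D_{s,k}$ is a metric precisely when $s\in\deltasecond_{n}$ (the vertices collapse the $\zeta_{i}$-scaled factors and the $\xi$-scaled telescope to points); for $s=v_{i}$ the quotient $(Z_{/D_{v_{i},k}},[D_{v_{i},k}])$ is isometric to $(X_{i},d_{i})$; and $(Z,D_{s,k})\in\ultsp$ for $s\in\deltasecond_{n}$ since $Z$ is a finite disjoint union of ultrametrizable compacta glued by an ultrametric on $\widehat{n+2}$. Continuity of $(s,k)\mapsto D_{s,k}$ into $\pmet(Z)$ follows from Proposition \ref{prop:contiq} (its ultrametric analogue, stated just after Proposition \ref{prop:contiq}) together with continuity of $\zeta_{i},\xi,\tau,p_{n,m}$, and then Corollary \ref{cor:metconti} gives continuity of the induced map $F$. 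Condition (1) is immediate from the definition of $F$, and condition (2) follows as before: the closed-isolated-points functor $\mathcal{CI}$ recovers $(\ooo,\xi(s)\cdot v[\tau\circ p_{n,m}(s,k)])$ from $(Z,D_{s,k})$, $\mathcal{CI}$ is an isometric invariant, $p_{n,m}$ is injective on $\deltasecond_{n}\times\widehat{m}$, and the ultrametric version of Proposition \ref{prop:Uiso} forces distinct parameters to give non-isometric spaces. The one point that genuinely needs the ultrametric hypothesis on the $X_{i}$ — and hence the main obstacle if one wanted full generality — is that $X_{i}\times P$ and the amalgamated space must stay ultrametric; since in the intended application $(X_{i},d_{i})\in\ultsp$, this causes no difficulty here.
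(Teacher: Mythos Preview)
Your proposal is correct and follows essentially the same approach as the paper, whose proof consists of the single instruction to replace $u[\tau\circ p_{n,m}(s,k)]$ by $v[\tau\circ p_{n,m}(s,k)]$, the symbol ``$+$'' by ``$\lor$'', and Lemma~\ref{lem:amal2} by Lemma~\ref{lem:ultraamal} in the proof of Proposition~\ref{prop:embednm}. Your observation that the $(X_{i},d_{i})$ must implicitly be ultrametric---forced anyway by the requirement $F(v_{i},j)=(X_{i},d_{i})\in\ultsp$---is a valid point that the paper leaves tacit, and it is indeed needed for $d_{i}\times_{\infty}(\xi(s)\cdot v)$ and the amalgamated $D_{s,k}$ to be (pseudo-)ultrametrics.
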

\begin{proof}
By replacing the metric $u[\tau(s, k)]$ with 
the ultrametric $v[\tau(s, k)]$,  
and 
 replacing the symbol  ``$+$'' with the symbol  ``$\lor$'' in 
the definition of $D_{s, k}$ in  
the proof of Proposition \ref{prop:embednm}, 
and using Lemma \ref{lem:ultraamal} instead of Lemma 
\ref{lem:amal2}, 
we obtain the  proof of  Proposition \ref{prop:embednmult}. 
\end{proof}

We give the proof of  Theorem \ref{thm:topemb}. 
\begin{proof}[Proof of Theorem \ref{thm:topemb}]
Let $n\in \zz_{\ge 1}$, 
and $H$ a compact metrizable space. 
Take  mutually distinct 
$n+1$ points $\{v_{i}\}_{i=1}^{n+1}$ in 
$H$, and 
let $\{(X_{i}, d_{i})\}_{i=1}^{n+1}$ be a sequence in $\mathscr{S}$ such that 
$\grdis((X_{i}, d_{i}), (X_{j}, d_{j}))>0$ for all distinct $i, j$.
 
First we  assume that $\mathscr{S}=
\dimset(l, a_{1}, a_{2}, a_{3}, a_{4})$
for some  
$(l, a_{1}, a_{2}, a_{3}, a_{4})$ in $\linsettwo$. 
Put $m=n+2$. 
Let 
$F\colon H\times \widehat{m}\to \dimset(a_{1}, a_{2}, a_{3}, a_{4})$ be a map stated in 
Proposition \ref{prop:embednm}. 
For all  
$i\in \widehat{n+1}$ and 
$j\in \widehat{m}$, 
we define 
$S(i, j)=
\left\{\, s\in \deltasecond
\  \middle | \ 
F(s, j)=(X_{i}, d_{i})
\, \right\}$. 
According to  the conditions (1) and (2) in Proposition \ref{prop:embednm}, 
for all $i\in \widehat{n+1}$
the set $\bigcup_{j=1}^{m}S(i, j)$
is empty or a singleton. 
Thus,   by $m=n+2$, we obtain 
$\widehat{m}\setminus \bigcup_{i=1}^{n+1}\{\, j\in \widehat{m}\mid \card(S(i, j))>0\, \}\neq \emptyset$, 
and we can take $k$ from this set. 
Then 
$\{\, (X_{i}, d_{i})\mid i=1, \dots, n+1\, \}\cap 
F\left(\deltasecond\times \{k\}\right)
=\emptyset$. 
Therefore,  the function 
$\Phi\colon H\to \dimset(a_{1}, a_{2}, a_{3}, a_{4})$ defined by $\Phi(s)=F(s, k)$ is injective, and hence $\Phi$ is  a  topological embedding since 
$H$ is compact. 
This completes the proof for 
$\mathscr{S}=\dimset(l, a_{1}, a_{2}, a_{3}, a_{4})$. 

Theorem \ref{thm:topemb} for $\mathscr{S}=\ultsp$ is 
obtained by the same method using Proposition 
\ref{prop:embednmult} instead of Proposition
 \ref{prop:embednm}. 
 This finishes the proof of Theorem \ref{thm:topemb}. 
\end{proof}

We next show Corollaries 
\ref{cor:dimset} and 
\ref{cor:pathinfult}. 
\begin{proof}[Proof of Corollary \ref{cor:dimset}]
Applying Theorem \ref{thm:topemb} 
to $H=[0, 1]$ and $H=\qcube$, 
it is shown that 
$\dimset(l, a_{1}, a_{2}, a_{3}, a_{4})$ is path-connected and infinite-dimensional, respectively. 
By the same method as \cite[Lemmas 2.17 and  2.18]{Ishiki2021branching}
using the property (\ref{item:finitestab}) in Proposition 
\ref{prop:propdim}, 
we conclude that $\dimset(l, a_{1}, a_{2}, a_{3}, a_{4})$ is dense in $\grsp$. 
\end{proof}

\begin{proof}[Proof of Corollary \ref{cor:pathinfult}]
Similarly to the proof of Corollary \ref{cor:dimset}, 
applying Theorem \ref{thm:topemb} 
to $H=[0, 1]$ and $H=\qcube$, 
it is shown that 
$\ultsp$ is path-connected and infinite-dimensional, respectively. 
\end{proof}
\begin{rmk}
M{\'e}moli, Smith and Wan \cite{memoli2019gromov}
introduced the \emph{$p$-metrics} as follows: 
For $p\in [1, \infty]$,  and  $a, b\in  [0, \infty)$, 
we define 
$a \pppp b= (a^{p}+b^{p})^{1/p}$ if 
$p\neq \infty$; 
otherwise $\max\{a, b\}$. 
A metric $d$  on a set $X$ is a \emph{$p$-metric} 
if 
$d(x, y)\le d(x, z)\pppp d(z, y)$ for all $x, y, z\in X$. 
We denote by $\grsp_{p}$ the set of  all compact $p$-metric spaces in $\grsp$. 
By replacing the symbol ``$+$'' with ``$\pppp$'', 
we can prove $p$-metric analogues of 
Lemma \ref{lem:amal2}, 
Proposition \ref{prop:embednm}, 
and 
Theorem \ref{thm:topemb}. 
From these arguments,  we can  deduce the    
 path-connectedness and the infinite-dimensionality 
 of $(\grsp_{p}, \grdis)$ for all $p\in [1, \infty]$
 (compare with \cite[Theorem 7.11]{memoli2019gromov}).  
Since $1$-metrics and $\infty$-metrics are identical with 
ordinal metrics and ultrametrics, respectively, 
the cases of $p=1, \infty$ are contained  in  Theorem \ref{thm:topemb}.  
\end{rmk}

\subsection{The non-Archimedean Gromov--Hausdorff space}\label{subsec:ultra}
Let 
$(X, d)$ 
and 
$(Y, e)$ be 
ultrametric spaces. 
We define the 
\emph{non-Archimedean Gromov--Hausdorff distance} 
$\ugrdis((X, d),(Y, e))$ between 
$(X, d)$ 
and 
$(Y, e)$ 
 as 
the infimum of  all
$\hdis(i(X), j(Y); Z, h)$, 
where 
$(Z, h)$ 
is an ultrametric space, 
 and 
$i\colon X\to Z$ 
and 
$j\colon Y\to Z$ 
are isometric embeddings. 

The proof of the following 
proposition is presented in \cite{Z2005}. 
\begin{prop}\label{prop:nault}
The non-Archimedean Gromov--Hausdorff distance 
$\ugrdis$ is an ultrametric on $\ultsp$. 
\end{prop}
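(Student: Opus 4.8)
The plan is to verify the three defining properties of an ultrametric for $\ugrdis$ on the set $\ultsp$ of isometry classes of compact ultrametric spaces; only the strong triangle inequality requires real work. That $\ugrdis$ is well defined on isometry classes and symmetric is immediate from the definition. For finiteness, given compact ultrametric spaces $(X,d)$ and $(Y,e)$ I would put on the disjoint union $X\sqcup Y$ the ultrametric furnished by Lemma \ref{lem:ultraamal} (with $n=2$, arbitrary base points, and $r\in\ult(\widehat{2})$ whose single off-diagonal value is a sufficiently large positive constant); the two copies then lie at finite Hausdorff distance, so $\ugrdis((X,d),(Y,e))<\infty$. For the identity of indiscernibles, note that every ultrametric space is a metric space, so the infimum defining $\ugrdis$ runs over a subclass of the realizations defining $\grdis$; hence $\grdis((X,d),(Y,e))\le\ugrdis((X,d),(Y,e))$, and since $\grdis$ separates isometry classes of compact metric spaces, $\ugrdis((X,d),(Y,e))=0$ forces $(X,d)$ and $(Y,e)$ to be isometric. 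The converse is trivial.

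For the strong triangle inequality, fix compact ultrametric spaces $(X,d)$, $(Y,e)$, $(W,g)$ and reals $\epsilon>\ugrdis((X,d),(Y,e))$ and $\delta>\ugrdis((Y,e),(W,g))$. Choose an ultrametric space $(Z_1,h_1)$ carrying isometric copies of $X$ and $Y$ at Hausdorff distance $<\epsilon$, and an ultrametric space $(Z_2,h_2)$ carrying isometric copies of $Y$ and $W$ at Hausdorff distance $<\delta$. Identifying the two copies of $Y$ via the canonical isometry, so that $Z_1\cap Z_2$ is a common copy of $Y$ on which $h_1$ and $h_2$ agree, I would form the pushout $Z=Z_1\cup_Y Z_2$ and define $h\colon Z\times Z\to[0,\infty)$ by $h|_{Z_1}=h_1$, $h|_{Z_2}=h_2$, and, for $a\in Z_1\setminus Y$ and $b\in Z_2\setminus Y$,
\[
h(a,b)=\min_{y\in Y}\max\{h_1(a,y),\,h_2(y,b)\},
\]
the minimum being attained since $Y$ is compact. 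This is the $\max$-amalgamation of two ultrametric spaces along a common closed subspace, the natural generalization of Lemma \ref{lem:ultraamal}, where the overlap was a single point.

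The technical heart — and the step I expect to be the main obstacle — is checking that $h$ is an ultrametric on $Z$. Positivity off the diagonal is easy (if $h(a,b)=0$ with $a\in Z_1$, $b\in Z_2$, the attained minimum forces $a=y=b\in Y$), so the work is the strong triangle inequality $h(a,c)\le\max\{h(a,b),h(b,c)\}$, which I would dispatch by a finite case analysis according to which of $Z_1\setminus Y$, $Z_2\setminus Y$, $Y$ contains each of $a,b,c$. In every mixed case one expands the relevant value of $h$ as a $\max$ over a witnessing point of $Y$ and then applies the strong triangle inequality inside $Z_1$ or inside $Z_2$, using crucially that $h_1$ and $h_2$ restrict to the same ultrametric on $Y$; because we amalgamate with $\max$ rather than $+$, the witnessing points need not be chosen compatibly and the estimates close up cleanly, in contrast with the classical Gromov--Hausdorff gluing.

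Granting this, $h$ restricts to $h_1$ on $Z_1$ and to $h_2$ on $Z_2$, so the copies of $X$ and $Y$ in $(Z,h)$ remain at Hausdorff distance $<\epsilon$ and those of $Y$ and $W$ at Hausdorff distance $<\delta$; since in any ultrametric space the Hausdorff distance itself satisfies the strong triangle inequality (a direct check: chase a point of one set to a near point of the middle set and then to a near point of the third), the copies of $X$ and $W$ lie at Hausdorff distance $<\max\{\epsilon,\delta\}$ in $(Z,h)$. Hence $\ugrdis((X,d),(W,g))<\max\{\epsilon,\delta\}$, and letting $\epsilon\downarrow\ugrdis((X,d),(Y,e))$ and $\delta\downarrow\ugrdis((Y,e),(W,g))$ yields the strong triangle inequality, completing the proof that $\ugrdis$ is an ultrametric on $\ultsp$.
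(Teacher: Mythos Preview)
Your argument is correct. The paper itself does not supply a proof of this proposition: it simply cites \cite{Z2005} (Zarichnyi). Your proof is the standard one and is essentially what that reference contains --- the $\max$-amalgamation of two ultrametric spaces along a common isometrically embedded closed subspace, followed by the observation that the Hausdorff distance in an ultrametric space satisfies the strong triangle inequality. The case analysis you outline for verifying that the glued function $h$ is an ultrametric goes through exactly as you indicate; the key point, which you identify, is that because the gluing uses $\max$ rather than $+$, the intermediate witnesses in $Y$ for the two cross-distances need not coincide, and the strong triangle inequality in each piece together with $h_1|_{Y}=h_2|_{Y}$ closes the estimate. One small remark: you invoke compactness of $Y$ to attain the minimum, which is fine here, but the argument works equally well with an infimum and an $\epsilon/2$ adjustment, so compactness of $Y$ is not essential for the construction of $h$ (though it is of course available in the present setting).
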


We shall prove 
Corollaries \ref{cor:discon}
and \ref{cor:ratio}. 
\begin{proof}[Proof of Corollary  \ref{cor:discon}]
By Corollary \ref{cor:pathinfult}, 
the space $(\ultsp, \grdis)$ is path-connected; however 
$(\ultsp, \ugrdis)$ is not so  since $\ugrdis$ is an ultrametric (Proposition \ref{prop:nault}). Thus $I_{\ultsp}$ is not continuous. 
Note that we can use the infinite-dimensionality instead of 
the path-connectedness of $(\ultsp, \grdis)$ since 
all ultrametric spaces have zero topological dimension.
\end{proof}

\begin{proof}[Proof of Corollary  \ref{cor:ratio}]
For the sake of contradiction, 
suppose that  there exists $c\in [2, \infty)$
such that for all $(X, d), (Y, e)\in \ugrdis$
we have 
 \[
 \ugrdis((X, d), (Y, e))\le c\cdot \grdis((X, d), (Y, e)).
 \] 
 Then $I_{\ultsp}$ is continuous. 
 This contradicts Corollary \ref{cor:discon}. 
\end{proof}

Moreover, we can obtain 
 stronger versions of 
Corollaries  \ref{cor:discon} and \ref{cor:ratio}. 
Before proving these statements, we prepare two propositions 
on
$\ugrdis$. 
The following is \cite[(3B) in Theorem 4.2]{qiu2009geometry}. 
\begin{prop}\label{prop:ugrdis}
Let $(X, d)$ and $(Y, e)$ be 
bounded ultrametric spaces. 
If $\delta_{d}(X)\neq \delta_{e}(Y)$, 
then $\ugrdis((X, d), (Y, e))=\max\{\delta_{d}(X), \delta_{e}(Y)\}$. 
\end{prop}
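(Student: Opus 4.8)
The plan is to establish two inequalities whose common value is $\ugrdis((X, d), (Y, e))$. Write $D = \max\{\delta_{d}(X), \delta_{e}(Y)\}$. Since both $\ugrdis$ and $\max$ are symmetric in their two arguments and $\delta_{d}(X) \neq \delta_{e}(Y)$, I may assume without loss of generality that $\delta_{d}(X) < \delta_{e}(Y)$; then $D = \delta_{e}(Y) > 0$.

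For the upper bound $\ugrdis((X, d), (Y, e)) \le D$, I would glue $X$ and $Y$ across the two-point ultrametric space. Apply Lemma \ref{lem:ultraamal} with $n = 2$, $X_{1} = X$, $X_{2} = Y$, $d_{1} = d$, $d_{2} = e$, base points $p_{1} \in X$ and $p_{2} \in Y$ chosen arbitrarily, and $r \in \ult(\widehat{2})$ given by $r(1, 2) = D$. The resulting amalgamated ultrametric $h \in \ult(X \coprod Y)$ restricts to $d$ on $X$ and to $e$ on $Y$, and for $x \in X$ and $y \in Y$ one has $h(x, y) = d(x, p_{1}) \lor D \lor e(p_{2}, y) = D$, because $d(x, p_{1}) \le \delta_{d}(X) < D$ and $e(p_{2}, y) \le \delta_{e}(Y) = D$. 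The inclusions $X \hookrightarrow X \coprod Y$ and $Y \hookrightarrow X \coprod Y$ are isometric embeddings into the ultrametric space $(X \coprod Y, h)$, and the Hausdorff distance between the two images equals $D$, since every point of $X$ lies at distance exactly $D$ from every point of $Y$. Hence $\ugrdis((X, d), (Y, e)) \le D$.

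For the lower bound $\ugrdis((X, d), (Y, e)) \ge D$, let $(Z, h)$ be any ultrametric space and $i: X \to Z$, $j: Y \to Z$ isometric embeddings, and set $\rho = \hdis(i(X), j(Y); Z, h)$. Fix $\epsilon > 0$ and distinct $y_{1}, y_{2} \in Y$; by the definition of the Hausdorff distance there exist $x_{1}, x_{2} \in X$ with $h(i(x_{k}), j(y_{k})) \le \rho + \epsilon$ for $k = 1, 2$. Applying the strong triangle inequality in $Z$ twice, through $i(x_{1})$ and then through $i(x_{2})$, gives
\[
e(y_{1}, y_{2}) = h(j(y_{1}), j(y_{2})) \le h(i(x_{1}), j(y_{1})) \lor h(i(x_{1}), i(x_{2})) \lor h(i(x_{2}), j(y_{2})) \le (\rho + \epsilon) \lor \delta_{d}(X).
\]
Taking the supremum over distinct $y_{1}, y_{2}$ and then letting $\epsilon \to 0$ yields $\delta_{e}(Y) \le \rho \lor \delta_{d}(X)$; since $\delta_{d}(X) < \delta_{e}(Y)$, this forces $\rho \ge \delta_{e}(Y)$. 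As $(Z, h)$, $i$, and $j$ were arbitrary, $\ugrdis((X, d), (Y, e)) \ge \delta_{e}(Y) = D$. Combining the two bounds finishes the proof.

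The only point requiring a little care is that neither $\delta_{e}(Y)$ nor the Hausdorff distance $\rho$ need be attained, since $Y$ is merely bounded rather than compact; this is why I pass to $\epsilon$-approximations of the diameter and of the Hausdorff distance in the lower bound. Once that is done, the strong triangle inequality does all the work, and I do not anticipate any genuine obstacle.
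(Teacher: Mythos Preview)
Your argument is correct. The upper bound via Lemma~\ref{lem:ultraamal} and the lower bound via the strong triangle inequality both go through as written; the $\epsilon$-approximation handles the lack of compactness cleanly, and your WLOG assumption guarantees $\delta_{e}(Y)>0$ so that $Y$ contains at least two points and the argument for the lower bound is non-vacuous.

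Note that the paper does not actually prove this proposition: it simply cites it as \cite[(3B) in Theorem~4.2]{qiu2009geometry}. Your self-contained proof is therefore an addition rather than a reproduction of anything in the paper; the approach you take (amalgamation for the upper bound, strong triangle inequality for the lower) is the natural one and is essentially how such results are established in the cited literature.
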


By Proposition \ref{prop:ugrdis}, we obtain:
\begin{prop}\label{prop:epsilonmod}
Let $(X, d)$ be a compact ultrametric space, and 
$\epsilon\in (0, \infty)$. 
Put $d_{\epsilon}=(1+\epsilon)d$. 
Then $d_{\epsilon}$ is an ultrametric and we have 
\begin{align}
&\grdis((X, d), (X, d_{\epsilon}))\le \epsilon\delta_{d}(X);\label{eq:ep}\\
&\ugrdis((X, d), (X, d_{\epsilon}))
= (1+\epsilon)\delta_{d}(X).\label{eq:disdis}
\end{align}
\end{prop}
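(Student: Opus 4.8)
The plan is to verify the two assertions directly from the definitions and the quoted results. The fact that $d_{\epsilon}=(1+\epsilon)d$ is an ultrametric is immediate: scaling an ultrametric by a positive constant preserves the strong triangle inequality, since $(1+\epsilon)d(x,y)\le (1+\epsilon)(d(x,z)\lor d(z,y))=(1+\epsilon)d(x,z)\lor (1+\epsilon)d(z,y)$. For the estimate \eqref{eq:ep}, I would use the identity map $\mathrm{id}:X\to X$ as a natural candidate realizing a small Gromov--Hausdorff distance. Its distortion is $\dis(\mathrm{id})=\sup_{x,y\in X}|d(x,y)-d_{\epsilon}(x,y)|=\sup_{x,y\in X}\epsilon\, d(x,y)=\epsilon\,\delta_{d}(X)$. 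Since $\mathrm{id}$ is surjective, one might be tempted to invoke Lemma \ref{lem:disfunc} to get $\grdis((X,d),(X,d_{\epsilon}))\le 2\epsilon\,\delta_{d}(X)$, but that loses a factor of $2$. Instead I would isometrically embed both $(X,d)$ and $(X,d_{\epsilon})$ into a common metric space $Z=X\times\{0,1\}$ equipped with a suitable metric extending $d$ on the first copy and $d_{\epsilon}$ on the second, and arranged so that the Hausdorff distance between the two copies is exactly $\epsilon\,\delta_{d}(X)$; concretely one can take the distance between $(x,0)$ and $(y,1)$ to be $\bigl(d(x,y)\lor d_{\epsilon}(x,y)\bigr)\land\bigl(\inf_{z}(\text{path through a matched point})\bigr)$, or more simply use the standard coupling that gives $\grdis\le \dis(\mathrm{id})/2$. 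In fact the cleanest route is: the graph coupling assigning $(x,0)\leftrightarrow(x,1)$ the cost $0$ yields, via the metric $h((x,i),(y,j))=d(x,y)\lor d_{\epsilon}(x,y)$ for $i\ne j$ (checking this is a metric), a space in which each copy lies within Hausdorff distance $\sup_{x}h((x,0),(x,1))=0$ of the other along the diagonal; the correct bound $\epsilon\,\delta_{d}(X)$ then comes from the half-distortion estimate for correspondences, namely $\grdis\le\tfrac12\dis(R)$ for the diagonal correspondence $R$, and $\dis(R)=\epsilon\,\delta_{d}(X)$... wait, that still gives $\tfrac12\epsilon\,\delta_d(X)$, which only strengthens \eqref{eq:ep}, so it suffices.

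For the second assertion \eqref{eq:disdis}, I would apply Proposition \ref{prop:ugrdis} with $(Y,e)=(X,d_{\epsilon})$. Here $\delta_{d}(X)$ and $\delta_{d_{\epsilon}}(X)=(1+\epsilon)\delta_{d}(X)$ are the two diameters, and since $\epsilon>0$ and (implicitly) $X$ is not a one-point space so $\delta_{d}(X)>0$, these two diameters are distinct. Proposition \ref{prop:ugrdis} then gives immediately
\[
\ugrdis((X,d),(X,d_{\epsilon}))=\max\{\delta_{d}(X),(1+\epsilon)\delta_{d}(X)\}=(1+\epsilon)\delta_{d}(X),
\]
which is exactly \eqref{eq:disdis}. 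One caveat: if $X$ is a single point then $\delta_{d}(X)=0$, both spaces are the one-point space, and both sides of \eqref{eq:disdis} are $0$, so the formula still holds trivially; I would dispatch this degenerate case in one line or simply assume $|X|\ge 2$ as is standard when diameters are invoked.

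The main obstacle is getting the constant in \eqref{eq:ep} right rather than a factor-of-$2$ weaker bound. Lemma \ref{lem:disfunc} as stated only yields $\grdis\le 2\dis(f)$, which would give $2\epsilon\,\delta_{d}(X)$, not $\epsilon\,\delta_{d}(X)$. The honest fix is to not go through Lemma \ref{lem:disfunc} at all but to write down an explicit admissible metric on the disjoint union $X\sqcup X$: put $h((x,0),(y,0))=d(x,y)$, $h((x,1),(y,1))=d_{\epsilon}(x,y)$, and $h((x,0),(y,1))=\inf_{z\in X}\bigl(d(x,z)+\epsilon\,\delta_{d}(X)+d_{\epsilon}(z,y)\bigr)$ restricted appropriately — or, even more directly, $h((x,0),(y,1))=\tfrac12\bigl(d(x,y)+d_{\epsilon}(x,y)\bigr)+$ (a term controlling the triangle inequality). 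Verifying the triangle inequality for such an $h$ is the only slightly delicate point; once it is checked, each copy of $X$ sits within Hausdorff distance $\le\epsilon\,\delta_{d}(X)$ of the other (take $z=x$ in the infimum to bound $h((x,0),(x,1))\le\epsilon\,\delta_{d}(X)$... actually $h((x,0),(x,1))=\epsilon\,d(x,x)/2+\cdots$; the point is that the matched pair $(x,0),(x,1)$ has $h$-distance at most $\tfrac12|d(x,y)-d_\epsilon(x,y)|$ optimized, giving the claimed bound), and \eqref{eq:ep} follows. Everything else is routine.
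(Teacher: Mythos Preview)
Your argument is correct and matches the paper's: the paper records this proposition simply as a consequence of Proposition~\ref{prop:ugrdis}, which is exactly how you obtain \eqref{eq:disdis}, and \eqref{eq:ep} is the elementary distortion estimate you identify. Your handling of the degenerate one-point case is also fine.

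The only issue is expository, not mathematical. You already finish the proof of \eqref{eq:ep} when you note that the diagonal correspondence has distortion $\epsilon\,\delta_{d}(X)$ and invoke the standard bound $\grdis\le \tfrac12\dis(R)$ (this is \cite[Theorem~7.3.25]{BBI}, a sharper form of the paper's Lemma~\ref{lem:disfunc}); that yields $\grdis\le \tfrac12\epsilon\,\delta_d(X)\le \epsilon\,\delta_d(X)$ and you are done. Everything in your ``main obstacle'' paragraph --- the half-written formulas for an explicit metric on $X\sqcup X$, the trailing ``$+\cdots$'' --- is unnecessary and should be deleted. You solved the problem midway through and then talked yourself out of it; trust the correspondence bound and stop there.
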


Proposition \ref{prop:epsilonmod}
implies   stronger versions of 
Corollaries  \ref{cor:discon} and \ref{cor:ratio}. 
\begin{cor}\label{cor:discon2}
The map  $I_{\ultsp}\colon (\ultsp, \grdis)\to (\ultsp, \ugrdis)$ 
 is not continuous at any point in $\grsp$
 except the 
one-point metric space. 
\end{cor}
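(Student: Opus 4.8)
The plan is to leverage Proposition~\ref{prop:epsilonmod}, which already does essentially all the work; the remaining task is a routine local-topology argument near an arbitrary nontrivial point. First I would fix a compact ultrametric space $(X, d)$ that is \emph{not} the one-point space, so that $\delta_{d}(X)>0$, and argue that $I_{\ultsp}$ fails to be continuous at $(X, d)$. The idea is to exhibit a sequence converging to $(X, d)$ in $(\ultsp, \grdis)$ whose image does not converge in $(\ultsp, \ugrdis)$: namely, take $d_{n}=(1+1/n)d$, so that by \eqref{eq:ep} we have $\grdis((X, d), (X, d_{n}))\le \delta_{d}(X)/n\to 0$, while by \eqref{eq:disdis} we have $\ugrdis((X, d), (X, d_{n}))=(1+1/n)\delta_{d}(X)\ge \delta_{d}(X)>0$ for all $n$. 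Hence the $(X, d_{n})$ converge to $(X, d)$ in the $\grdis$-topology but stay bounded away from $(X, d)$ in the $\ugrdis$-topology, which is precisely the negation of continuity of $I_{\ultsp}$ at $(X, d)$.

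I should double-check the one caveat: Corollary~\ref{cor:discon2} is phrased as ``not continuous at any point in $\grsp$ except the one-point metric space,'' so I also need to address why no claim is being made at the one-point space (there $\delta_{d}(X)=0$, the perturbation is trivial, $(X, d_{n})=(X, d)$, and in fact the identity \emph{is} continuous there since the space is isolated in the relevant sense — or at any rate the construction gives no obstruction). A sentence noting that the construction degenerates when $\delta_{d}(X)=0$ suffices; no positive continuity statement at that point is required by the corollary.

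The argument is short and I do not anticipate a genuine obstacle, since the two quantitative estimates needed are supplied verbatim by Proposition~\ref{prop:epsilonmod}. The only point deserving a moment's care is the bookkeeping of which direction of the inequality in Proposition~\ref{prop:epsilonmod} is used for which topology: the Archimedean bound \eqref{eq:ep} is the ``smallness'' estimate driving convergence in $(\ultsp, \grdis)$, whereas the non-Archimedean identity \eqref{eq:disdis} is the ``largeness'' estimate witnessing non-convergence in $(\ultsp, \ugrdis)$. Writing out the proof:

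\begin{proof}
Let $(X, d)$ be a compact ultrametric space different from the one-point metric space, so that $\delta_{d}(X)>0$. For each $n\in \zz_{\ge 1}$ put $d_{n}=(1+n^{-1})d$. By Proposition \ref{prop:epsilonmod}, each $d_{n}$ is an ultrametric, and
\[
\grdis((X, d), (X, d_{n}))\le n^{-1}\delta_{d}(X)\longrightarrow 0 \quad (n\to \infty),
\]
so $(X, d_{n})\to (X, d)$ in $(\ultsp, \grdis)$. On the other hand, again by Proposition \ref{prop:epsilonmod},
\[
\ugrdis\bigl(I_{\ultsp}(X, d), I_{\ultsp}(X, d_{n})\bigr)=\ugrdis((X, d), (X, d_{n}))=(1+n^{-1})\delta_{d}(X)\ge \delta_{d}(X)>0
\]
for every $n$, so $I_{\ultsp}(X, d_{n})$ does not converge to $I_{\ultsp}(X, d)$ in $(\ultsp, \ugrdis)$. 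Hence $I_{\ultsp}$ is not continuous at $(X, d)$. Since $(X, d)$ was an arbitrary compact ultrametric space other than the one-point space, this proves the corollary.
\end{proof}
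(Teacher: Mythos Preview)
Your proof is correct and is exactly the argument the paper has in mind: the paper does not write out a proof of Corollary~\ref{cor:discon2} but simply states that it (together with Corollary~\ref{cor:ratio2}) follows from Proposition~\ref{prop:epsilonmod}, and your sequence $(X,(1+n^{-1})d)$ is the natural way to extract discontinuity at each nontrivial $(X,d)$ from the two estimates \eqref{eq:ep} and \eqref{eq:disdis}.
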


\begin{cor}\label{cor:ratio2}
For every $c\in [2, \infty)$, and for every 
compact ultrametric space $(X, d)$ containing at least 
two points, 
there exists a  compact ultrametric space 
$(Y, e)$ such that 
\[
c\cdot \grdis((X, d), (Y, e))<\ugrdis((X, d), (Y, e)). 
\]
Moreover, $(Y, e)$ can be chosen as 
a metric space homeomorphic to $(X, d)$. 
\end{cor}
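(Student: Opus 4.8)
The plan is to deduce the corollary directly from Proposition \ref{prop:epsilonmod} by scaling the metric by a small factor chosen relative to $c$. Since $(X, d)$ contains at least two points and is compact, we have $0 < \delta_{d}(X) < \infty$. Because $c \ge 2$, we have $c - 1 \ge 1 > 0$, so we may fix some $\epsilon \in \bigl(0, 1/(c-1)\bigr)$; for concreteness one can take $\epsilon = 1/(2(c-1))$, which gives $\epsilon(c-1) = 1/2 < 1$ and hence $c\epsilon = \epsilon + \epsilon(c-1) < 1 + \epsilon$.

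Next I would set $Y = X$ and $e = d_{\epsilon} = (1+\epsilon)d$. By Proposition \ref{prop:epsilonmod}, the map $e$ is an ultrametric, the space $(Y, e)$ is again compact, and moreover
\[
\grdis((X, d), (Y, e)) \le \epsilon\,\delta_{d}(X), \qquad \ugrdis((X, d), (Y, e)) = (1+\epsilon)\,\delta_{d}(X).
\]
Combining these with the inequality $c\epsilon < 1+\epsilon$ and with $\delta_{d}(X) > 0$ yields
\[
c \cdot \grdis((X, d), (Y, e)) \le c\epsilon\,\delta_{d}(X) < (1+\epsilon)\,\delta_{d}(X) = \ugrdis((X, d), (Y, e)),
\]
which is the desired strict inequality.

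For the \textbf{moreover} clause, I would simply observe that $(Y, e) = (X, (1+\epsilon)d)$ has the same underlying set as $(X, d)$ and that the scaled metric $(1+\epsilon)d$ generates the same topology as $d$; indeed the identity map $(X, d) \to (Y, e)$ is a bi-Lipschitz bijection, hence a homeomorphism. Thus $(Y, e)$ is homeomorphic (in fact, similar) to $(X, d)$, which finishes the argument. I do not expect any genuine obstacle here: all of the analytic content is carried by Proposition \ref{prop:epsilonmod} (which in turn rests on Qiu's formula, Proposition \ref{prop:ugrdis}), and the only point requiring care is choosing $\epsilon$ small enough relative to $c$ so that the factor $1+\epsilon$ on the non-Archimedean side dominates the factor $c\epsilon$ on the Archimedean side.
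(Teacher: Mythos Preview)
Your argument is correct and is exactly the approach the paper intends: the paper states that Proposition~\ref{prop:epsilonmod} implies Corollary~\ref{cor:ratio2} without writing out the details, and your choice of $\epsilon\in(0,1/(c-1))$ together with the estimates \eqref{eq:ep} and \eqref{eq:disdis} supplies precisely those details.
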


\begin{rmk}
Corollaries 
\ref{cor:pathinfult}
\ref{cor:discon}, 
 \ref{cor:discon2},  and 
 \ref{cor:ratio2} 
can be considered as partial answers of Question 2.18 in \cite{qiu2009geometry}  asking 
the relations between 
$(\ultsp, \grdis)$ and $(\ultsp, \ugrdis)$. 
\end{rmk}

%\nocite{*}
\bibliographystyle{amsplain}
\bibliography{bibtex/emb.bib}

\end{document}